\DeclareMathOperator{\tra}{tr}
\DeclareMathOperator{\Proj}{Proj}
\DeclareMathOperator{\vspan}{span}
\DeclareMathOperator{\var}{var}
\DeclareMathOperator{\ran}{Ran}
\DeclareMathOperator{\Ran}{Ran}
\DeclareMathOperator{\sign}{sign}
\DeclareMathOperator{\BetaDist}{Beta}
\newcommand*{\NoteFont}{\fontfamily{qcr}\selectfont}
\DeclareTextFontCommand{\textNoteFont}{\NoteFont}
\newcommand{\cl}[1]{\mathcal{#1}}
\newcommand{\bb}[1]{\mathbbm{#1}}
\newcommand{\abs}[1]{\left\lvert#1\right\rvert}
\newcommand{\tr}[1]{\tra\left[#1\right]}
\newcommand{\norm}[1]{\left\|#1\right\|}
\newcommand{\Prob}[1]{\mathbb{P}\left\{#1\right\}}
\newcommand{\EV}[2][]{\mathbb{E}_{#1}\left[#2\right]}
\newcommand{\Exp}[1]{\exp\left(#1\right)}
\newcommand{\ip}[2]{\left\langle #1, #2 \right\rangle}
\newcommand{\p}[1]{\left(#1\right)}
\newcommand{\pb}[1]{\left[#1\right]}
\newcommand{\ps}[1]{\left\{#1\right\}}
\newcommand{\given}{\vert}
\def\half{\frac{1}{2}}
\def\R{\mathbb{R}}
\def\S{\mathbb{S}}
\def\N{\mathbb{N}}
\def\C{\mathbb{C}}
\def\F{\mathbb{F}}
\def\ind{\bb{1}}
\def\eqdist{\overset{\text{(d)}}{=}}
\theoremstyle{plain}
\newtheorem{prop}{Proposition}
\newtheorem*{prop*}{Proposition}
\newtheorem{lem}[prop]{Lemma}
\newtheorem{cor}[prop]{Corollary}
\newtheorem{thm}[prop]{Theorem}
\newtheorem{definition}[prop]{Definition}
\numberwithin{prop}{subsection}
\title{Phase retrieval by random  binary questions:\\ Which complementary subspace is closer?}
\author{Dylan Domel-White and Bernhard G. Bodmann\thanks{The research in this paper was supported in part by NSF grant DMS-1715735.}}
\begin{document}

\maketitle

\begin{abstract}
Phase retrieval in real or complex Hilbert spaces is the task of recovering a vector, up to an overall unimodular multiplicative constant, from magnitudes of linear measurements.
In this paper, we assume that the vector is normalized, but retain only qualitative, binary  information
about the measured magnitudes 
by comparing them with a threshold. In more specific, geometric terms,
we choose a sequence of subspaces in a real or complex Hilbert space and only record whether a given vector is 
closer to the subspace than to the complementary subspace. 
The subspaces have half the dimension of the Hilbert space and are independent, uniformly distributed with respect to the action of the orthogonal or unitary groups.
The main goal of this paper is to find a feasible algorithm for approximate recovery based on the information gained about the vector
from these binary questions and to establish error bounds for its approximate recovery. 
We provide a pointwise bound for fixed input vectors and a uniform bound that controls the worst-case scenario
among all inputs. Both bounds hold with high probability with respect to the choice of the subspaces. 
For real or complex vectors of dimension $n$, the pointwise bound requires $m \ge C \delta^{-2} n \log(n)$ and the uniform bound
$m \ge C \delta^{-2} n^2 \log(\delta^{-1} n)$ binary questions in order to achieve  an accuracy of $\delta$. The accuracy $\delta$ is
measured by the operator norm of the difference between the rank-one orthogonal projections corresponding to the normalized input vector and its approximate recovery.
 \end{abstract}

\section{Introduction}\label{sec:Intro}

This paper is concerned with approximate phase retrieval from measuring qualitative, binary measurements. Phase retrieval is the task of recovering a vector in a real or complex Hilbert space 
up to an overall multiplicative unimodular constant from magnitudes of linear quantities.
Motivated by applications from diffraction imaging \cite{Walther:1963, Fienup:1978, Millane:1990}, or from studying properties of the Fourier transform \cite{Akutowicz:1956, Akutowicz:1957}, results on phase retrieval first focused on the case where measurements consist of magnitudes of linear functionals \cite{BalanCasazzaEtAl:2006, BandeiraCahillEtAl:2014, ConcaEdidinEtAl:2015}.
Phase retrieval with quantized measurements was studied as well \cite{PlanVershynin:2013a, MrouehRosasco:2014}, see also the preceding works \cite{BoufounosBaraniuk:2008, PlanVershynin:2013b, AiLapanowskiEtAl:2014}. In this context, quantization means the magnitudes are replaced by values from a finite alphabet. Coarse, one-bit quantization represents the extreme case, for example when only qualitative information is obtained such as how each measured magnitude compares to a single given threshold.
Another example in which coarsely quantized measurements appear is quantum state tomography, where the outcomes of experiments are recorded in order
to estimate the state of a quantum system \cite{Scott:2006,GutaKahnEtAl:2018}. In this case, the probability of an outcome is given by the squared norm of the projection of
a (normalized) state vector onto a subspace associated with the outcome. Estimating this quantum state is then, up to the known normalization, equivalent to phase retrieval for the state vector.
Phase retrieval based on norms of projections onto subspaces has also been studied outside of the context of quantum theory 
\cite{CasazzaWoodland:2014, BachocEhler:2015, CahillCasazza:2016}.
 It may be viewed as a fusion-frame version of phase retrieval, where higher rank maps replace linear functionals and the norm replaces the absolute value.
 The recovery of matrices rather than vectors is yet another higher rank generalization of phase retrieval \cite{CandesEldarEtAl:2013, PlanVershynin:2013a}.

The main goal of the present paper is to combine coarse quantization with phase retrieval from norms of projections under the assumption that the input vector $x$ is normalized.
In our setup, each measured quantity is the answer to a binary question: Is the input vector $x$ closer to a given subspace or to its orthogonal complement?
Hence, a measurement results in a binary string that encodes the orientation of $x$ in terms of the answers to the binary questions associated with a collection of subspaces.
This reduction to binary quantities is a dramatic loss of information compared to phase-insensitive, real-valued measurements.
Since the outcome of a measurement is unchanged by rescaling the input vector, we are only obtaining information about the one-dimensional subspace spanned by it.
The restriction of $x$ being a unit vector permits us to perform phase retrieval from its proximity to subspaces. 
In analogy with the unresolvable ambiguity in phase retrieval, we only seek to recover the one-dimensional subspace spanned by $x$, or equivalently, the orthogonal rank-one projection $X$ onto the span of $x$.

To achieve our goal, we use measure concentration arguments and show that measurements coming from randomly selected subspaces allow approximate recovery via a semidefinite program.
The recovery strategy in this paper can be outlined as follows: We specialize to even-dimensional real or complex Hilbert spaces and to randomized one-bit measurements based on subspaces of half the dimension.
For each random subspace $V_j$ in a sequence $\{V_1, V_2, \dots, V_m\}$, we determine whether the given input vector $x$ is closer to $V_j$ or to its orthogonal complement $V_j^\perp$.
The outcome of the binary measurement is thus encoded in a sequence of orthogonal projections $\{\hat P_1, \hat P_2, \dots, \hat P_m\}$ such that the range of each $\hat P_j$ is the subspace $\hat V_j \in \{V_j, V_j^\perp\}$ that is closest to $x$.
The answer to each binary question is equivalently determined by comparing the squared norm $\norm{P_jx}_2^2 = \tr{P_jxx^*}$ to a threshold.
For the approximate recovery of the subspace spanned by $x$ we then simply average over these orthogonal projections $\{\hat{P}_j\}_{j=1}^m$ and find the eigenspace corresponding to the largest eigenvalue of this average.
We denote the orthogonal projection onto this eigenspace by $\hat{X}$.
This operator is, in fact, the solution of a semidefinite program which maximizes $\sum_{j=1}^m\tr{\hat{P}_jY}$ in the convex set of all positive semidefinite $Y$ with $\tr{Y} \leq 1$ \cite[Section 4.2]{hornjohnson2}. This strategy is motivated by earlier results of Plan and Vershynin in the more general setting of one-bit low rank matrix recovery \cite{PlanVershynin:2013a}.

Randomized constructions and associated algorithms for recovery based on measure concentration have been studied previously in the contexts of matrix recovery, compressed sensing, and other problems in phase retrieval \cite{BoufounosBaraniuk:2008, GrossLiuEtAl:2010, JacquesLaskaEtAl:2013, CandesStrohmerEtAl:2013, PlanVershynin:2013a, PlanVershynin:2013b, AiLapanowskiEtAl:2014, BachocEhler:2015}. In contrast to the low-rank matrix recovery treated by Plan and Vershynin \cite{PlanVershynin:2013a, PlanVershynin:2013b},
we use measure concentration in operator norm to achieve our error bounds.  For a related result  based on measure concentration in operator norm but without a low-rank prior, see the work 
 by Guta and others \cite{GutaKahnEtAl:2018} on approximate quantum state tomography from measurements associated with projections onto subspaces.

In this paper, we show results that control the accuracy of the approximate recovery, in particular the decay of the error as the number of random subspaces grows.
There are two types of error estimates, pointwise and uniform in the input vector.

\emph{Pointwise Bound.} For a rank-one orthogonal projection $X$ on a real or complex $2n$-dimensional Hilbert space and a desired recovery accuracy $\delta > 0$, we show that using
\[
    m \geq C\delta^{-2}n\log(n)
\]
random subspaces for a binary measurement and the algorithm we described yields $\hat{X}$ such that the operator norm difference is bounded by $\norm{\hat{X} - X} < \delta$ with high probability. 
Here $C$ is a constant independent of $n$ and $\delta$.
See Theorem~\ref{thm:Pointwise} for the exact statement and proof of this result, along with an exact value for $C$.
One may compare this to a similar result from one-bit compressed sensing which says that $m = C\delta^{-4}n$ random one-bit measurements (of the form $X \mapsto \sign(\tr{G_jX})$ for $\{G_j\}_{j=1}^m$ independent matrices with independent standard normal entries) are sufficient to recover $\hat{X}$ with nuclear norm $\tr{\abs{\hat{X}}} = 1$ and $\tr{\abs{\hat{X}\hat{X}^*}^2} \leq 1$ such that the Hilbert-Schmidt norm $\norm{\hat{X} - X}_{HS} = \p{\tr{\abs{\hat{X} - X}^2}}^\half < \delta$ \cite[Section 3.3]{PlanVershynin:2013a}.
Another result on one-bit phase retrieval \cite{MrouehRosasco:2014} also gives comparable asymptotics when using measurements based on rank-two Gaussian random matrices.

\emph{Uniform Bound.} We also establish an error bound that holds uniformly for all rank-one projections as input with one fixed choice of subspaces for measurement. For a desired recovery accuracy $\delta > 0$, we show that using
\[
    m \geq C\delta^{-2}n^2\log(\delta^{-1}n)
\]
random subspaces for a binary measurement ensures with high probability that for each rank-one orthogonal projection $X$ we obtain $\hat{X}$ such that $\norm{\hat{X} - X} < \delta$. See Theorem~\ref{thm:Uniform} for details.

We note that for fixed $n$, the asymptotic dependence of $m$ on $\delta$ improves on results derived by Plan and Vershynin in a more general setting.
This can be attributed to our choice of measurements which are constructed with random orthogonal projections, not Gaussian matrices.
One expects that the Lipschitz regularity of the function $X \mapsto \tr{PX}$ is better than that of $X \mapsto \tr{GX}$, at least in a set of large measure among all rank-one projections.
This is advantageous, in particular in combination with perturbation arguments as in Section~\ref{subsec:SoftHammingArgument}.

We also include pointwise and uniform error bounds for faulty meaurements. In this case, up to a fixed fraction of the answers to the binary questions have been flipped, possibly in
an adversarial manner. For fixed dimension $n$, the faulty measurements contribute with an additional term in the bound for the recovery error that is proportional to the fraction
of bit flips.

The rest of this paper is organized as follows: After fixing some notation, the remainder of Section~\ref{sec:Intro} describes our one-bit phaseless measurement model in more detail; we explain how we generate random projections for each binary measurement, and how we approximately recover a signal based on such a binary measurement of it.
In Section~\ref{sec:Pointwise} we prove the error bound for our pointwise recovery, Theorem~\ref{thm:Pointwise}.
Lastly, in Section~\ref{sec:Uniform} we establish the uniform accuracy for recovery, Theorem~\ref{thm:Uniform}. Each of the main theorems in Sections~~\ref{sec:Pointwise}
and~\ref{sec:Uniform} is followed by a corollary that provides error bounds in the presence of faulty measurements. Both error bounds are also illustrated with plots showing empirical data from reconstruction using (PEP) in $\R^{16}$.

\textbf{Notation:} Since we are interested in both real and complex signals, we let $\F$ stand for either $\R$ or $\C$, and define $\beta = \half$ when $\F = \R$ and $\beta = 1$ when $\F = \C$ in order to simplify some expressions which depend on the underlying field. We consider only unit norm signals, and so denote the unit sphere in $\F^d$ by $\S_\F^{d - 1}$. As mentioned previously, both our input signals and binary measurement can be defined in terms of orthogonal projections, so we let $\Proj_\F(k,d)$ denote the space of rank-$k$ orthogonal projections on $\F^d$. For a vector $x \in \S_\F^{d-1}$, $xx^* \in \Proj_\F(1, d)$ is the rank-one projection onto the span of $x$. We write $\norm{x}$ for the euclidean norm of a vector $x \in \F^d$ and $\norm{A}$ for the operator norm of a matrix $A \in \F^{d \times d}$.

\subsection{One-Bit Phaseless Measurement Model}\label{subsec:MeasurementModel}
Our measurements are constructed from qualitative information about the proximity of $x \in \S_\F^{d-1}$ to subspaces in $\F^d$.
We formulate the measurements in terms of the orthogonal projections onto these subspaces.

For a projection $P\in \Proj_\F(k,d)$, we define its associated binary question as the map $\varphi_P: \S_\F^{d-1} \to \{0, 1\}$ given by
\begin{equation}
    \varphi_P(x) =  \begin{cases}
                        1   &\text{ if } \norm{Px}_2^2 \geq \frac{k}{d}\\
                        0  &\text{ else}.
                    \end{cases}
\end{equation}

The choice of $k/d$ as the cut-off value for quantization is natural since it is the average of $x \mapsto \norm{Px}_2^2$ over all unit vectors.
Equivalently, $k/d$ is the average of $P \mapsto \norm{Px}_2^2$ when $x$ is a fixed unit vector and $P$ is chosen uniformly at random in $\Proj_\F(k,d)$, as discussed further below in Section~\ref{subsec:RandomProjections}.

These binary questions are in fact phaseless, since $\varphi_P(x) = \varphi_P(\alpha x)$ for any $\alpha \in \F$ with $\abs{\alpha} = 1$.
Additionally, for any such $\alpha$ and any $x \in \S_\F^{d-1}$ we have $\alpha x (\alpha x)^* = xx^*$, and $\norm{Px}_2^2 = \tr{Pxx^*}$, so these binary questions can be recast as maps on the set of rank-one orthogonal projections.
In this framework --- thinking of input signals as rank-one projections --- the binary question associated to $P$ is the map $\phi_P: \Proj_\F(1, d) \to \{0,1\}$ defined by
\begin{equation}
    \phi_P(X) =  \begin{cases}
                        1   &\text{ if } \tr{PX} > \frac{k}{d}\\
                        0  &\text{ else}.
                    \end{cases}
\end{equation}
Reformulating $\varphi_P$ as $\phi_P$ encapsulates the fact that the map $\varphi_P$ is constant on the set of unit vectors that differ from $x$ by a unimodular multiplicative constant.
Henceforth, we will use this latter framework and speak of measuring and reconstructing rank-one orthogonal projections rather than unit vectors.

The binary question $\phi_P$ measures qualitative proximity information about the input signal.
For projections $P \in \Proj_\F(k, d)$ and $X \in \Proj_\F(1, d)$, $\tr{PX} = \cos^2(\theta)$, where $\theta$ is the principal angle between the one-dimensional subspace $\Ran(X)$ and the $k$-dimensional subspace $\Ran(P)$.
Thus, $\phi_P(X) = 1$ if and only if $\Ran(X)$ is closer to $\Ran(P)$ than the average for a random one-dimensional subspace, and if this occurs we say $P$ is \emph{proximal} to $X$.

Our goal is to achieve accurate phase retrieval with the qualitative proximity information gained from a sufficiently large set of these binary questions from projections  $\{P_j\}_{j=1}^m$. For such a collection, we define a corresponding binary measurement map.

\begin{definition} Given a sequence of orthogonal projections $\mathcal P=\{P_j\}_{j=1}^m$ on $\F^d$, the \emph{binary measurement map} associated with $\mathcal P$ is 
$\Phi_\cl{P}: \Proj(1,2n) \to \{0, 1\}^m$ defined by 
\begin{equation}
    \Phi_{\cl P}(X) := (\phi_{P_j}(X))_{j=1}^m\, .
\end{equation}
We also define the \emph{measurement Hamming distance} (associated with $\cl{P}$) between $X$ and $Y$ to be
\begin{equation}
    d_\cl{P}(X, Y) := d_H(\Phi_\cl{P}(X), \Phi_\cl{P}(Y))
\end{equation}
where $d_H$ denotes the normalized Hamming distance on $\{0,1\}^m$.
\end{definition}

In other words: $\Phi_\cl{P}(X)$ is a binary vector where each one-bit entry encodes the proximity of $X$ to a projection in $\cl{P}$. The value $d_\cl{P}(X,Y)$ gives the relative frequency of measurement projections that separate $X$ and $Y$, i.e. the number of binary questions in the measurement that yield different answers for $X$ and $Y$ as inputs.

\subsection{Measurement by Random Projections}\label{subsec:RandomProjections}
In the absence of an intuitive way to construct ``optimal'' collections of projections for our one-bit measurements, we instead consider projections chosen uniformly at random. The uniform probability measure on $\Proj_\F(k,d)$ is induced by the Haar measure of the unitary group $\cl{U}_\F(d)$, and is characterized by the property of being rotationally invariant, see \cite{BachocEhler:2015}. In other words, if $P$ is uniformly distributed in $\Proj_\F(k,d)$ then for any $U \in \cl{U}_\F(d)$ we have $UPU^* \eqdist P$ (where $\eqdist$ denotes equality in distribution).

In practice, there are many equivalent ways to generate a uniformly distributed rank-$k$ projection. For example, one can take $k$ Gaussian random vectors in $\F^d$ and then form the projection onto their span. A second way is to take a fixed rank-$k$ projection and conjugate it by a Haar distributed random unitary $U \in \cl{U}_\F(d)$. It can be helpful to think of a ``uniformly distributed rank-$k$ projection'' as just a ``projection onto a uniformly distributed $k$-dimensional subspace''.

For most of the paper, we work with the binary measurement map associated to a collection $\cl{P} = \{P_j\}_{j=1}^m$ of independent uniformly distributed projections in $\Proj_\F(n, 2n)$.
The reason for using half-dimensioned projections is because their associated one-bit measurements $\phi_{P}$ have a geometrically intuitive meaning: for a fixed $X \in \Proj_\F(1, 2n)$, $\phi_{P}(X) = 1$ if and only if $\tr{PX} > \frac{1}{2} \geq \tr{(I - P)X}$, i.e. the subspace $\ran(X)$ is closer to $\Ran(P)$ than to its orthogonal complement $\Ran(I - P)$.

\subsection{Approximate Phase Retrieval by Semidefinite Programming}\label{subsec:SemidefiniteProgramming}

A main goal of this paper is to use the outcomes of a random binary measurement to estimate the input accurately. 
Suppose we have measured an unknown vector $x \in \S_\F^{2n-1}$ with the binary measurement map $\Phi_{\cl P}$ associated with a random collection of projections $\cl{P} \subset \Proj_\F(n ,2n)$ and obtained the binary vector  $\Phi_\cl{P}(xx^*)$.
The information we gain from these measurements will not in general completely determine the rank-$1$ projection  $X=x x^*$ corresponding to the input vector $x$, but with enough measured quantities we can deduce a projection $\hat{X}$ which approximates $X$ in some metric.
A consistent reconstruction would seek an element $\hat X$ in the feasible set, that is, the set of all $Y$ consistent with the binary measurement in the sense that $\Phi_{\cl P} (Y) = \Phi_{\cl P}(X)$ \cite{Boyd:2004}.
A natural error bound for such a reconstruction strategy would then result from the diameter of the feasible set, which intuitively will be small if $\cl{P}$ is suitably large. 

In this paper, we relax the perfect consistency condition, but still achieve approximate recovery with a computationally feasible, semidefinite programming algorithm investigated in other works \cite[Section 4.2]{hornjohnson2}. The approximate recovery of $X$ is conveniently described in terms of projections obtained from the binary measurement $\Phi_\cl{P}(X)$.

\begin{definition}
Given $X \in \Proj_\F(1, d)$ and $P \in \Proj_\F(k, d)$ we define the \emph{proximally flipped projection}
\begin{equation}
    \hat{P}(X) := \begin{cases}
                        P     &   \text{ if } \tr{PX} \geq \frac{k}{d}\\
                        I - P &   \text{ if } \tr{PX} < \frac{k}{d}.
                    \end{cases}
\end{equation}

Next, for a sequence of orthogonal projections $\cl{P} = \{P_j\}_{j=1}^m$, the \emph{empirical average of the proximally flipped projections} is
\begin{equation}
    \hat{Q}_\cl{P}(X) := \frac{1}{m}\sum_{j=1}^m \hat{P}_j(X).
\end{equation}
\end{definition}

The recovery algorithm we study takes the binary measurement $\Phi_\cl{P}(X)$ and produces $\hat{X}$ 
that solves the semidefinite program
\begin{equation} \label{eq:SDP}\tag{PEP}
    \begin{aligned}
        & \underset{Y}{\text{maximize}}
        & & \tr{\hat{Q}_\cl{P}(X)Y} \\
        & \text{subject to}
        & & Y \succeq 0, \tr{Y} \leq 1.
    \end{aligned}
\end{equation}

We call this the Principal Eigenspace Program (PEP) because it amounts to maximizing the Rayleigh quotient \cite[Section 4.2]{hornjohnson2} for $\hat{Q}_\cl{P}(X)$. This special class of semidefinite programs can be implemented efficiently \cite[Chapter 4]{Parlett:1998}.

Since $\hat{Q}_\cl{P}(X)$ is a positive self-adjoint operator, it may be decomposed according to the spectral theorem as a linear combination of mutually orthogonal rank-1 projections $\hat{Q}_\cl{P}(X) = \sum_{i=1}^{2n}\lambda_i E_i$, where $\lambda_1 \geq \lambda_2 \geq \ldots \geq \lambda_{2n} \geq 0$. Thus, any positive self-adjoint trace normalized operator with range contained in the principal eigenspace of $\hat{Q}_\cl{P}(X)$ is a solution to (PEP). If in addition $\lambda_1$ is strictly larger than $\lambda_2$ (which happens with probability 1 for our random measurement model), then its principal eigenspace is one-dimensional, and so $\hat{X} = E_1$ is the unique solution to (PEP). Proposition \ref{prop:ExpFlippedProjs} will show that $\EV{\hat{Q}_\cl{P}(X)} = \mu_1X + \mu_2(I - X)$ with $\mu_1 > \mu_2$, and so for large $m$ we might expect $\hat{X} \approx X$ by a measure concentration argument.

%\subsection{From Pointwise to Uniform Guarantees for Recovery Error}\label{Intro-Uniform}

Section~\ref{sec:Pointwise} of this paper shows the following pointwise result: for any fixed $X \in \Proj(1,2n)$ and any $\delta > 0$, we can choose $m$ large enough so that a collection of independent uniformly distributed half-dimensioned projections $\cl{P} = \{P_j\}_{j=1}^m$ will, with high probability, yield a measurement $\Phi_{\cl{P}}(X)$ for which the solution $\hat{X}$ to (PEP) satisfies $\norm{\hat{X} - X} < \delta$. See Theorem~\ref{thm:Pointwise} for details.

Much of the effort in Section~\ref{sec:Uniform} is directed toward getting \emph{uniform} results from the above \emph{pointwise} one. The uniform result we derive says: for any $\delta > 0$, we can choose $m$ large enough so that a collection of independent uniformly distributed half-dimensioned projections $\cl{P} = \{P_j\}_{j=1}^m$ will, with high probability, yield measurements $\Phi_\cl{P}(X)$ for every $X \in \Proj(1, 2n)$ for which the solution $\hat{X}$ to (PEP) satisfies $\norm{\hat{X} - X} < \delta$. See Theorem~\ref{thm:Uniform} for details.

According to the uniform result, we can generate a collection of projections for which \emph{every} signal is approximately recoverable up to an error of $\delta$ from the one-bit questions using those projections.
The pointwise result can be thought of as an averaged performance guarantee, whereas the uniform bound controls even the worst case input.

\section{Error bound for the approximate recovery of fixed input signals}\label{sec:Pointwise}

We begin deriving results on the statistics of signal recovery using (PEP) and our one-bit phaseless measurement model by considering a fixed unit-norm input vector $x \in \F^{2n}$ while the binary measurement map $\Phi_\cl{P}$ is chosen randomly.
As outlined before, we identify vectors that differ by a unimodular multiplicative constant, and when considering only unit-norm vectors as input signals we represent these equivalence classes by rank-one projection matrices.
The random binary measurement map is determined by a sequence of random projections $\cl{P} = \{P_j\}_{j=1}^m$ whose rank is half the dimension of the signal space, and provides information
whether the input signal is closer to the range of each projection or to its orthogonal complement. The main goal of this section is to prove that (PEP) provides accurate recovery of an input signal $X \in \Proj_\F(1, 2n)$ when sufficiently many random projections are used for the binary measurement, i.e. when $m$ is large enough.
The derivation of the results proceeds in three steps:
\begin{enumerate}
    \item[(1)] If the orthogonal projections for the measurement of $X$ are chosen uniformly at random and proximally flipped,
    then their empirical average has the expectation $Q(X) := \EV{\hat{Q}_\cl{P}(X)} = \mu_1X + \mu_2(I - X)$ where $0 < \mu_2 < \mu_1$ are constants. In particular, $X$ is the projection onto the eigenspace corresponding to the largest eigenvalue of $Q(X)$.
    \item[(2)] The empirical average $\hat{Q}_\cl{P}(X)$ concentrates near its expectation $Q(X)$.
    \item[(3)] The eigenspace of $\hat{Q}_\cl{P}(X)$ corresponding to its largest eigenvalue concentrates near $X$.
\end{enumerate}

\subsection{Expectation of $\hat{Q}_\cl{P}(X)$}\label{subsec:ExpFlippedProjs}

Before we can investigate the accuracy of (PEP), we need a simple fact about the distribution of the principal angle between a random $n$-dimensional subspace and a fixed one-dimensional subspace in $\F^{2n}$.

\begin{lem}\label{lem:DistributionOfMeas}
    Let $X \in \Proj_\F(1, 2n)$ be fixed and $P \in \Proj_\F(n, 2n)$ be uniformly distributed. Then $\tr{PX} \sim \BetaDist(\beta n, \beta n)$, i.e. $\tr{PX}$ has probability density function
    \begin{equation}
        p(t) = B\p{\beta n, \beta n}^{-1}\pb{t(1 - t)}^{\beta n - 1},
    \end{equation}
    where $B(a, b) = \int_0^1 t^{a - 1}(1 - t)^{b - 1}\ dt$ is the Beta function. In particular, $\EV{\tr{PX}} = \half$ and the distribution of $\tr{PX}$ is symmetric about $\half$.
\end{lem}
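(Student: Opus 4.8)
The plan is to exploit the rotational invariance of the law of $P$ to reduce the statement to a concrete computation about a uniformly random point on the sphere, and then to recognize the resulting quantity as a ratio of independent Gamma variables, which is Beta-distributed. First I would write $X = xx^*$ for a unit vector $x$ and use $\tr{PX} = x^*Px = \norm{Px}^2$. Writing $P \eqdist UP_0U^*$ for a fixed rank-$n$ projection $P_0$ (say, onto the first $n$ coordinates) and $U$ Haar-distributed on $\cl{U}_\F(2n)$, rotational invariance gives $\norm{Px}^2 \eqdist \norm{P_0U^*x}^2$. Since $U$ is Haar and $x$ is a fixed unit vector, $v := U^*x$ is uniformly distributed on $\S_\F^{2n-1}$, and $\norm{P_0v}^2 = \sum_{i=1}^n \abs{v_i}^2$. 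So it suffices to find the distribution of the mass a uniform point on the sphere places on its first $n$ coordinates.

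Next I would represent $v = g/\norm{g}$ where $g$ is a standard (real or complex) Gaussian vector in $\F^{2n}$; this is legitimate because a normalized Gaussian vector is uniform on the sphere. Then
\[
\sum_{i=1}^n \abs{v_i}^2 = \frac{\sum_{i=1}^n \abs{g_i}^2}{\sum_{i=1}^n \abs{g_i}^2 + \sum_{i=n+1}^{2n}\abs{g_i}^2} =: \frac{A}{A+B},
\]
where $A$ and $B$ are independent and identically distributed. In the real case each $g_i^2$ contributes one real degree of freedom, so $A, B \sim \chi^2_n$, i.e. $A, B \sim \text{Gamma}(n/2)$ in the shape parametrization; in the complex case each $\abs{g_i}^2$ is an exponential variable (two real degrees of freedom), so $A, B \sim \text{Gamma}(n)$. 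In both cases $A, B \sim \text{Gamma}(\beta n)$ with a common scale, consistent with $\beta = \tfrac12$ for $\F = \R$ and $\beta = 1$ for $\F = \C$.

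I would then invoke the standard fact that for independent $A \sim \text{Gamma}(a)$ and $B \sim \text{Gamma}(b)$ with equal scale, the ratio $A/(A+B) \sim \BetaDist(a,b)$; this follows from a Jacobian change of variables from $(A,B)$ to $(A+B,\, A/(A+B))$. This yields $\tr{PX} \sim \BetaDist(\beta n, \beta n)$ together with the stated density. The mean $\EV{\tr{PX}} = \tfrac12$ and the symmetry of the law about $\tfrac12$ are then immediate: the density $p(t) \propto [t(1-t)]^{\beta n - 1}$ is invariant under $t \mapsto 1-t$, and the mean of $\BetaDist(a,a)$ equals $a/(2a) = \tfrac12$.

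The routine-but-delicate step — and essentially the only place an error could creep in — is the bookkeeping of degrees of freedom in the Gamma parameters across the real and complex cases, namely confirming that both collapse to the single shape parameter $\beta n$. Everything else (rotational invariance, the Gaussian representation of the uniform sphere, and the Gamma-to-Beta identity) is standard.
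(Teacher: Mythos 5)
Your proposal is correct and follows essentially the same route as the paper's proof: rotational invariance to reduce to a fixed coordinate projection acting on a uniform point of the sphere, the Gaussian representation $v = g/\norm{g}$, and the identification of $\frac{A}{A+B}$ for independent equal-shape Gamma (chi-squared) variables as a Beta random variable, with the same degree-of-freedom bookkeeping distinguishing $\F = \R$ from $\F = \C$. The only cosmetic difference is that the paper phrases the reduction via the uniformly distributed rank-one projection $U^*XU$ rather than the uniformly distributed unit vector $U^*x$, which is the same statement.
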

\begin{proof}
    Recall that if $U \in \cl{U}_\F(2n)$ is uniformly distributed and $E$ is the orthogonal projection onto the first $n$ standard basis vectors, then $UEU^* \overset{(d)}{=} P$. Thus
    \begin{equation*}
        \tr{PX} \overset{(d)}{=} \tr{UEU^*X} = \tr{EU^*XU}.
    \end{equation*}
    Observe that $U^*XU$ is a uniformly distributed rank-1 projection, which has the same distribution as $uu^*$ where $u \in \S_\F^{2n-1}$ is a uniformly distributed unit vector. Furthermore, $u \overset{(d)}{=} \frac{g}{\norm{g}_2}$ where $g \sim N(0,I_{2n})$ is the standard Gaussian random vector in $\F^{2n}$. So we have
    \begin{equation}\label{eq:TraceMeasurementBetaDist}
        \tr{EU^*XU} \overset{(d)}{=} \tr{Euu^*} = \norm{Eu}_2^2 \overset{(d)}{=} \frac{\norm{Eg}_2^2}{\norm{g}_2^2} = \frac{\sum_{k=1}^n \abs{g_k}^2}{\sum_{k=1}^{2n} \abs{g_k}^2}.
    \end{equation}
    
    If $\F = \R$, then the $g_k$'s are independent standard Gaussian random variables, so the right hand side of equation~(\ref{eq:TraceMeasurementBetaDist}) has the form $\frac{A}{A + B}$ where the random variables $A, B \sim \chi^2(n)$ are independent. Thus, Equation (\ref{eq:TraceMeasurementBetaDist}) is a $\BetaDist\p{\frac{n}{2}, \frac{n}{2}}$ random variable.
    
    If $\F = \C$, then each $g_k = a_k + ib_k$ where all the $a_k$ and $b_k$'s are independent standard random variables. In this case, since $\abs{g_k}^2 = \abs{a_k}^2 + \abs{b_k}^2$, the right hand side of Equation \ref{eq:TraceMeasurementBetaDist} has the form $\frac{A}{A + B}$ where $A, B \sim \chi^2(2n)$ are independent, and thus is a $\BetaDist(n,n)$ random variable.
\end{proof}

Next we compute the expectation of the empirical average of the proximally flipped projections.

\begin{prop}\label{prop:ExpFlippedProjs}
    Let $X \in \Proj_\F(1, 2n)$ and $\cl{P} = \{P_j\}_{j=1}^m$ be an independent sequence of uniformly distributed projections in $\Proj_\F(n,2n)$, then
    \begin{equation}
        Q(X) = \mu_1X + \mu_2(I - X),
    \end{equation}
    where
    \begin{equation}
        \begin{split}
            \mu_1 = \half + \frac{1}{\beta n 4^{\beta n} B\p{\beta n, \beta n}},
        \end{split}
        \quad\quad
        \begin{split}
            \mu_2 = \half - \frac{1}{\beta n (2 n - 1) 4^{\beta n} B\p{\beta n, \beta n}}.
        \end{split}
    \end{equation}
\end{prop}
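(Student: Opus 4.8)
The plan is to reduce the computation to a single projection, use rotational symmetry to pin down the form of $Q(X)$, and then evaluate two scalars. Since the $P_j$ are i.i.d.\ and expectation is linear, $Q(X) = \EV{\hat{Q}_\cl{P}(X)} = \EV{\hat{P}(X)}$, where $\hat{P}(X)$ is the proximally flipped projection built from a single uniformly distributed $P \in \Proj_\F(n,2n)$. So it suffices to analyze $\EV{\hat{P}(X)}$.

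First I would establish the claimed form $Q(X) = \mu_1 X + \mu_2(I-X)$ by a symmetry argument. Let $U$ be any unitary stabilizing $X$, i.e.\ $UXU^* = X$. Then $U^*XU = X$, so $\tr{(UPU^*)X} = \tr{P\,U^*XU} = \tr{PX}$, and consequently the proximal flipping intertwines with conjugation, $\widehat{UPU^*}(X) = U\hat{P}(X)U^*$. Combining this with the rotational invariance $UPU^* \eqdist P$ yields $U\,Q(X)\,U^* = Q(X)$ for every such $U$, so $Q(X)$ commutes with the entire stabilizer of $X$. Relative to the decomposition $\F^{2n} = \Ran(X) \oplus \Ran(I-X)$, the stabilizer acts by an arbitrary phase (or the reflection $\mathrm{diag}(-1,I)$ in the real case) on $\Ran(X)$ and by the full unitary, resp.\ orthogonal, group on $\Ran(I-X)$; the phase/reflection forces the off-diagonal blocks of $Q(X)$ to vanish, and Schur's lemma applied to the irreducible action on $\Ran(I-X)$ forces that block to be scalar. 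Hence $Q(X)$ is scalar on each summand, giving $Q(X) = \mu_1 X + \mu_2(I-X)$.

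It then remains to evaluate the two scalars. For $\mu_1$, pairing against $X$ gives $\mu_1 = \tr{Q(X)X} = \EV{\tr{\hat{P}(X)X}}$. Writing $T := \tr{PX}$, the definition of the proximal flip yields $\tr{\hat{P}(X)X} = \max(T, 1-T) = \tfrac12 + |T - \tfrac12|$, so $\mu_1 = \tfrac12 + \EV{|T-\tfrac12|}$. By Lemma~\ref{lem:DistributionOfMeas}, $T \sim \BetaDist(\beta n, \beta n)$; writing $a = \beta n$ and using symmetry about $\tfrac12$ together with the antiderivative identity $\tfrac{d}{dt}[t(1-t)]^a = a(1-2t)[t(1-t)]^{a-1}$, the integral $\EV{|T-\tfrac12|}$ telescopes to $\frac{1}{a\,4^a\,B(a,a)}$, which is exactly the stated correction term in $\mu_1$. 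For $\mu_2$, I would use the trace constraint: $\hat{P}(X)$ is always a rank-$n$ projection, so $\tr{\hat{P}(X)} = n$ and hence $\tr{Q(X)} = n$. Comparing with $\tr{\mu_1 X + \mu_2(I-X)} = \mu_1 + (2n-1)\mu_2$ gives $\mu_2 = (n-\mu_1)/(2n-1)$; substituting $\mu_1$ and simplifying $\frac{n-1/2}{2n-1} = \tfrac12$ produces the stated value.

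The main obstacle I anticipate is the structural step: correctly identifying the stabilizer of $X$ and verifying that its commutant is exactly $\{\mu_1 X + \mu_2(I-X)\}$, including the real case where the phase group degenerates to $\{\pm 1\}$ but the off-diagonal blocks are still killed by the reflection. Once the form is fixed, both scalars follow from short computations — the $\mu_1$ integral via the antiderivative trick and $\mu_2$ via the rank-preservation trace constraint.
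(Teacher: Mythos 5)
Your proposal is correct and follows essentially the same route as the paper: reduce to a single projection, use invariance of $Q(X)$ under unitaries stabilizing $X$ (via the intertwining $\widehat{UPU^*}(X)=U\hat{P}(X)U^*$ and $UPU^*\eqdist P$) to force the form $\mu_1X+\mu_2(I-X)$, evaluate $\mu_1$ from the $\BetaDist(\beta n,\beta n)$ law of $\tr{PX}$, and recover $\mu_2$ from the trace identity $\tr{Q(X)}=n$. Your reformulation $\tr{\hat{P}(X)X}=\tfrac12+\abs{\tr{PX}-\tfrac12}$ with the antiderivative of $[t(1-t)]^{\beta n}$ is just a clean repackaging of the paper's conditional-expectation and integration-by-parts step, not a different argument.
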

\begin{proof}
    We begin with some manipulation and reasoning that does not depend on whether $\F$ is $\R$ or $\C$, which only makes a difference when computing the values of $\mu_1$ and $\mu_2$.
    
    Since the $P_j$'s are identically distributed, we know that $\EV{\hat{P}_i(X)} = \EV{\hat{P}_j(X)}$ for all $i$ and $j$. Thus, by linearity of expectation we have $Q(X) = \EV{\hat{P}_1(X)}$.
    
    Also, the distribution of $\hat{P}_1(X)$ is invariant under conjugation with a unitary that fixes $X$. In other words, for a unitary $U \in \cl{U}_\F(2n)$ such that $UXU^* = X$, then $U\hat{P}_1(X)U^* \overset{(d)}{=} \hat{P}_1(X)$. To verify this, we use the rotational invariance of $P_1$ and the cyclic property of the trace to obtain
    \begin{equation*}
       U \hat P_{1}(X) U^* \eqdist U \widehat{(U^* P_1 U)}(X)\, U^* = \hat P_{1}( U X U^*) = \hat P_1(X) . 
    \end{equation*}
    
    Using the linearity of expectation once more, it follows that $Q(X)$ is also invariant under conjugation by unitaries that fix $X$. 
   This implies that every eigenspace of $Q(X)$ is preserved under rotations by all such unitaries, hence $\ran(X)$ and $\ran(X)^\perp$ are the eigenspaces of $Q(X)$. Letting $\mu_1$ and $\mu_2$ denote the respective eigenvalues, we write
    \begin{equation}
        Q(X) = \mu_1 X + \mu_2 (I - X).
    \end{equation}

    In order to determine the value of $\mu_1$, we use linearity of expectation to see
    \begin{equation}
        \mu_1 = \tr{Q(X)X} = \EV{\tr{\hat{P}_1(X)X}}.
    \end{equation}
    By the law of total probability we have,
    \begin{align*}
        \EV{\tr{\hat{P}_1(X)X}} &= \EV{\tr{\hat{P}_1(X)X}\ \biggr\vert\ \tr{P_1X} \geq \half}\Prob{\tr{P_1X} \geq \half}\\
        &+ \EV{\tr{\hat{P}_1(X)X}\ \biggr\vert\ \tr{P_1X} < \half}\Prob{\tr{P_1X} < \half}, \nonumber
    \end{align*}
    so by the definition of $\hat{P}_1(X)$ and the symmetry of the distribution of $\tr{P_1X}$ --- a consequence of Lemma~\ref{lem:DistributionOfMeas} --- it follows that
    \begin{equation}
        \EV{\tr{\hat{P}_1(X)X}} = \EV{\tr{PX} \biggr\vert\ \tr{PX} \geq \half}.
    \end{equation}
    
    We can compute this conditional expectation using integration by parts with the probability density function given in Lemma~\ref{lem:DistributionOfMeas}, yielding
    \begin{equation}
        \mu_1 = 2B\p{\beta n, \beta n}^{-1}\int_\half^1 \pb{t(1-t)}^{\beta n - 1}\ dt = \half + \frac{1}{\beta n 4^{\beta n}\ B\p{\beta n, \beta n}}.
    \end{equation}
    Since $\tr{Q(X)} = n$ by linearity of expectation, we know $\mu_1 + (2n-1)\mu_2 = n$, from which we get the desired expression for $\mu_2$.
    
\end{proof}

\subsection{Concentration of $\hat{Q}_\cl{P}(X)$ near $Q(X)$}\label{Pointwise-ConcentrationOfQ}

Since the empirical average of the proximally flipped projections $\hat{Q}_\cl{P}(X)$ is, after all, an empirical average, by the law of large numbers it should concentrate tightly around its expectation $Q(X)$ as the number of measurements $m$ goes to infinity. To make this precise, we use the Matrix Bernstein Inequality \cite[Theorem 1.6.2]{Tropp:2015}.

\begin{lem}\label{lem:EmpiricalAverageConcentration}
    Let $X \in \Proj_\F(1, 2n)$ and $\cl{P} = \{P_j\}_{j=1}^m$ be an independent sequence of uniformly distributed projections in $\Proj_\F(n, 2n)$. Then
    \begin{equation}
        \EV{\norm{\hat{Q}_\cl{P}(X) - Q(X)}} \leq \sqrt{\frac{\log(4n)}{2m}} + \frac{\log(4n)}{3m},
    \end{equation}
    and for any $t > 0$,
    \begin{equation}
        \Prob{\norm{\hat{Q}_\cl{P}(X) - Q(X)} \geq t} \leq 4n\Exp{-\frac{6t^2m}{7}}.
    \end{equation}
    
    In particular, if $m \geq \frac{7}{6}t^{-2}(\log(4n) + D)$ then
    \begin{equation}\label{eq:ConcEmpAvgFlipped}
        \Prob{\norm{\hat{Q}_\cl{P}(X) - Q(X)} \geq t} \leq \Exp{-D}.
    \end{equation}
\end{lem}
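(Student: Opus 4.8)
The plan is to apply the Matrix Bernstein Inequality \cite[Theorem 1.6.2]{Tropp:2015} to the centered summands $S_j := \frac{1}{m}\p{\hat{P}_j(X) - Q(X)}$. These are independent, identically distributed, self-adjoint, and mean-zero (since $\EV{\hat{P}_j(X)} = Q(X)$ by Proposition~\ref{prop:ExpFlippedProjs}), and their sum is exactly $\hat{Q}_\cl{P}(X) - Q(X)$. The theorem, applied in dimension $d = 2n$, requires two quantities: a uniform almost-sure bound $L$ with $\norm{S_j} \leq L$, and the matrix variance statistic $v := \norm{\sum_{j=1}^m \EV{S_j^2}}$. Its conclusions come with a prefactor $2d = 4n$ and the logarithmic factor $\log(2d) = \log(4n)$, which is where the $4n$ in the statement originates.

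The norm bound is immediate. Each $\hat{P}_j(X)$ is a rank-$n$ orthogonal projection, hence has spectrum in $\{0,1\}$, while $Q(X) = \mu_1 X + \mu_2(I - X)$ has eigenvalues $\mu_1, \mu_2 \in (0,1)$ by Proposition~\ref{prop:ExpFlippedProjs}. A difference of two self-adjoint operators whose spectra both lie in $[0,1]$ has operator norm at most $1$, so $\norm{\hat{P}_j(X) - Q(X)} \leq 1$ and we may take $L = 1/m$.

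The variance is the crux, and it is where the projection structure pays off. Since the $S_j$ are identically distributed, $\sum_{j=1}^m \EV{S_j^2} = \frac{1}{m}\EV{\p{\hat{P}_1(X) - Q(X)}^2}$. Expanding the square and using that $\hat{P}_1(X)$ is a projection, so $\hat{P}_1(X)^2 = \hat{P}_1(X)$, together with $\EV{\hat{P}_1(X)} = Q(X)$, causes the cross terms to collapse and leaves $\EV{\p{\hat{P}_1(X) - Q(X)}^2} = Q(X) - Q(X)^2 = Q(X)\p{I - Q(X)}$. This operator has eigenvalues $\mu_1(1 - \mu_1)$ and $\mu_2(1 - \mu_2)$, each bounded by $\frac{1}{4}$ because $t(1 - t) \leq \frac{1}{4}$ on $[0,1]$. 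Hence $v \leq \frac{1}{4m}$.

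Finally I would substitute $L = 1/m$ and $v \leq \frac{1}{4m}$ into the two conclusions of Matrix Bernstein. The expectation bound reads $\EV{\norm{\hat{Q}_\cl{P}(X) - Q(X)}} \leq \sqrt{2v\log(4n)} + \frac{1}{3}L\log(4n)$, which is exactly $\sqrt{\log(4n)/(2m)} + \log(4n)/(3m)$ after substitution. For the tail bound, the exponent becomes $\frac{-t^2/2}{v + Lt/3} = \frac{-6t^2 m}{3 + 4t}$; the only wrinkle is to replace $3 + 4t$ by $7$. For $t \leq 1$ this only enlarges (weakens) the bound, and for $t > 1$ the left-hand probability is zero, since $\hat{Q}_\cl{P}(X)$ is an average of projections and therefore $\norm{\hat{Q}_\cl{P}(X) - Q(X)} \leq 1$, so the stated inequality holds vacuously. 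This yields $\Prob{\norm{\hat{Q}_\cl{P}(X) - Q(X)} \geq t} \leq 4n\Exp{-6t^2m/7}$ for all $t > 0$. The final claim follows by forcing $4n\Exp{-6t^2m/7} \leq \Exp{-D}$: taking logarithms gives $\frac{6t^2m}{7} \geq \log(4n) + D$, i.e.\ $m \geq \frac{7}{6}t^{-2}(\log(4n) + D)$.
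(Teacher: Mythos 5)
Your proof is correct and follows essentially the same route as the paper: the identical centering $S_j = \frac{1}{m}(\hat{P}_j(X) - Q(X))$, the bound $L = 1/m$, the variance computation $\frac{1}{m}\norm{Q(X) - Q(X)^2} \leq \frac{1}{4m}$, and an application of the Matrix Bernstein inequality with $d_1 + d_2 = 4n$. In fact you supply details the paper leaves implicit --- notably the justification that replacing $3 + 4t$ by $7$ in the exponent is harmless because the probability vanishes for $t > 1$ --- so nothing is missing.
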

\begin{proof}
    Let $S_j = \frac{1}{m}(\hat{P}_j(X) - Q(X))$. Then $\EV{S_j} = 0$ and $\norm{S_j} \leq \frac{1}{m}$ for all $j = 1, \ldots, m$. Note that $Z := \sum_{j=1}^m S_j = \hat{Q}_\cl{P}(X) - Q(X)$. Additionally, since $\hat{P}_j(X)$ is a projection and $\EV{\hat{P}_j(X)} = Q(X)$ for all $j$, we may bound the matrix variance
    \begin{equation}
        v(Z) := \norm{\sum_{j=1}^m \EV{S_j^2}} = \frac{1}{m}\norm{Q(X) - Q(X)^2} \leq \frac{1}{4m}.
    \end{equation}
    
    The expectation bound and tail bound now follow from applying the Matrix Bernstein Inequality as in \cite[Theorem 1.6.2]{Tropp:2015}. Additionally, if $m \geq \frac{7}{6}t^{-2}(\log(4n) + D)$ then $\log(4n) - \frac{6t^2m}{7} \leq  -D$, which yields (\ref{eq:ConcEmpAvgFlipped}).
\end{proof}

\subsection{Concentration of $\hat{X}$ near $X$ (Pointwise Result)}
From Lemma~\ref{lem:EmpiricalAverageConcentration} we know that, with enough measurement projections, with high probability $\hat{Q}_\cl{P}(X)$ is close to $Q(X)$ in operator norm. When it is sufficiently close, then the eigenspace of $\hat{Q}_\cl{P}(X)$ corresponding to its maximum eigenvalue will also be close to $X$. To see this, we first need the following lemma. It is based on the fact that for two rank-one projections $X$ and $Y$, the difference $X - Y$ is a zero-trace self-adjoint operator of rank two, and hence has a spectral representation of the form $X - Y = \norm{X - Y}(A - B)$ with two mutually orthogonal rank-one projections $A$ and $B$.

\begin{lem}\label{lem:ExpectedDifferenceTraceMeasurements}
    Let $X, Y \in \Proj_\F(1, 2n)$. Then
    \begin{equation}
        \norm{X - Y} = (\mu_1 - \mu_2)^{-1}\tr{Q(X)(A - B)}
    \end{equation}
    where $A, B \in \Proj_\F(1, 2n)$ are the mutually orthogonal projections in the spectral decomposition $X - Y = \norm{X - Y}(A - B)$.
\end{lem}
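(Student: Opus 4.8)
The plan is to substitute the explicit form of $Q(X)$ supplied by Proposition~\ref{prop:ExpFlippedProjs} into the trace and to exploit that $A-B$ is traceless. First I would rewrite $Q(X) = \mu_2 I + (\mu_1 - \mu_2)X$, so that
\[
    \tr{Q(X)(A-B)} = \mu_2\tr{A-B} + (\mu_1 - \mu_2)\tr{X(A-B)}.
\]
Since $A$ and $B$ are rank-one projections, $\tr{A} = \tr{B} = 1$, hence $\tr{A-B} = 0$ and the $\mu_2 I$ contribution vanishes entirely. This reduces the lemma to establishing the single identity $\tr{X(A-B)} = \norm{X-Y}$.

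Next I would use the defining spectral decomposition $X - Y = \norm{X-Y}(A-B)$. Assuming $X \neq Y$ (otherwise both sides of the claimed equality are zero and there is nothing to prove), I can solve for $A - B = \norm{X-Y}^{-1}(X-Y)$ and substitute, which gives
\[
    \tr{X(A-B)} = \norm{X-Y}^{-1}\tr{X(X-Y)} = \norm{X-Y}^{-1}\p{1 - \tr{XY}},
\]
where the final equality uses that $X$ is a rank-one projection, so $X^2 = X$ and $\tr{X} = 1$.

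It then remains to identify $1 - \tr{XY}$ with $\norm{X-Y}^2$, which is the principal-angle relation for rank-one projections and is the only genuinely geometric input. I would obtain it by computing $\tr{(X-Y)^2}$ in two ways. On one hand, since $A$ and $B$ are mutually orthogonal projections we have $(A-B)^2 = A + B$, so the spectral form yields $\tr{(X-Y)^2} = \norm{X-Y}^2\tr{A+B} = 2\norm{X-Y}^2$. On the other hand, expanding directly and using $\tr{X^2} = \tr{Y^2} = 1$ together with $\tr{YX} = \tr{XY}$ gives $\tr{(X-Y)^2} = 2 - 2\tr{XY}$. Equating the two expressions produces $\norm{X-Y}^2 = 1 - \tr{XY}$. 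Substituting back yields $\tr{X(A-B)} = \norm{X-Y}^{-1}\norm{X-Y}^2 = \norm{X-Y}$, and combining with the first step gives $\tr{Q(X)(A-B)} = (\mu_1 - \mu_2)\norm{X-Y}$, which is the claimed formula after dividing by $\mu_1 - \mu_2 > 0$ (positive by Proposition~\ref{prop:ExpFlippedProjs}).

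I do not anticipate a serious obstacle: the argument is elementary trace manipulation. The one point requiring mild care is the geometric identity $\norm{X-Y}^2 = 1 - \tr{XY}$, which is precisely where the structure of the difference of rank-one projections enters (namely that $X-Y$ is a trace-zero, rank-two self-adjoint operator with eigenvalues $\pm\norm{X-Y}$). Everything else is bookkeeping built on the two facts that $A-B$ is traceless and that $Q(X)$ is an affine combination of $X$ and the identity.
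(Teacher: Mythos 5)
Your proof is correct, and it takes a cleaner, more algebraic route than the paper's. Both arguments ultimately rest on the same core identity, $\tr{Q(X)(X-Y)} = (\mu_1 - \mu_2)\p{1 - \tr{XY}}$ combined with $1 - \tr{XY} = \norm{X-Y}^2$, but they establish it differently. The paper makes the geometry explicit: it writes $X = xx^*$, $Y = yy^*$ with $y = \cos(\theta)x + \sin(\theta)z$, $x \perp z$, where $\theta$ is the principal angle, expands $Y$ in these coordinates to get $\tr{Q(X)(X-Y)} = (\mu_1 - \mu_2)\sin^2(\theta)$, and then invokes the standing fact $\norm{X-Y} = \sin(\theta)$ to cancel one factor of $\norm{X-Y}$. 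You instead stay coordinate-free: the tracelessness of $A - B$ removes the $\mu_2 I$ part of $Q(X)$, and the needed norm identity $\norm{X-Y}^2 = 1 - \tr{XY}$ is derived internally by evaluating $\tr{(X-Y)^2}$ in two ways, once through the spectral form using $(A-B)^2 = A + B$ and once by direct expansion. What your version buys is self-containedness: it uses nothing beyond the spectral decomposition already quoted in the lemma's statement, with no trigonometric parametrization and no appeal to $\norm{X-Y} = \sin(\theta)$. What the paper's version buys is that the principal angle, which recurs throughout the paper (e.g.\ $\tr{PX} = \cos^2(\theta)$ in Section~\ref{subsec:MeasurementModel}), stays visible in the computation. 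One pedantic caveat on your degenerate case: if $X = Y$, the projections $A, B$ are not determined by the zero difference, so the right-hand side is not well defined and it is not accurate to say both sides vanish; the cleaner remark is that the lemma's hypothesis only makes sense for $X \neq Y$, which affects neither your argument nor any application of the lemma.
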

\begin{proof}
    Let $\theta$ be the principal angle between the subspaces associated to $X$ and $Y$. Then we can pick $x, y, z \in \S_\F^{2n-1}$ with $x \perp z$ such that $X = xx^*, Y = yy^*$ and $y = \cos(\theta)x + \sin(\theta)z$. Then
    \begin{equation*}
        Y = yy^* = \cos^2(\theta)xx^* + \sin^2(\theta)zz^* + \sin(\theta)\cos(\theta)(xz^* + zx^*).
    \end{equation*}
    Since $Q(X) = \mu_1X + \mu_2(I - X)$,
    \begin{equation*}
        \tr{Q(X)(X-Y)} = \mu_1 - \cos^2(\theta)\mu_1 - \sin^2(\theta)\mu_2 = (\mu_1 - \mu_2)\sin^2(\theta).
    \end{equation*}
    Lastly, since $\sin(\theta) = \norm{X - Y}$, rewriting the left hand side using the spectral decomposition $X - Y = \norm{X - Y}(A - B)$ and cancelling the common factor of $\norm{X - Y}$ yields the desired equality.
\end{proof}

The spectral gap $\mu_1 - \mu_2$ of $Q(X)$ appears in the sufficient number of binary questions in both our pointwise and uniform result. The following lemma bounds this quantity in in terms of the dimension $n$.

\begin{lem}\label{lem:SpecGapBound}
    Let $\mu_1$ and $\mu_2$ be as in Proposition~\ref{prop:ExpFlippedProjs}. Then for $\beta n \geq 2$ 
    \begin{equation}
       \frac{(n-1)\sqrt{2\beta n - 1}}{\sqrt{2\pi}\beta n (2n - 1)} \leq \mu_1 - \mu_2 \leq \frac{4(n-1)\sqrt{2\beta n - 1}}{e\sqrt{2\pi}\beta n (2n - 1)}
    \end{equation}
    In particular, $(\mu_1 - \mu_2)^{-1} = O(\sqrt{n})$.
    \begin{proof}
        From the expressions derived in Proposition~\ref{prop:ExpFlippedProjs} we have
        \begin{equation}\label{eq:SpecGapExpression}
            \mu_1 - \mu_2 = \frac{2(n-1)}{\beta n (2n - 1) 4^{\beta n}B(\beta n, \beta n)}.
        \end{equation}
        Since $B(\alpha_1, \alpha_2) = \frac{\Gamma(\alpha_1)\Gamma(\alpha_2)}{\Gamma(\alpha_1 + \alpha_2)}$, we may use Stirling's formula to approximate the Beta function. In particular, from \cite{Robbins:1955} we have for all real numbers $k \geq 2$
        \begin{equation}
            \Exp{\frac{1}{12k - 11}} \leq \frac{\Gamma(k)}{\sqrt{2\pi}(k - 1)^{k - \half}\Exp{-(k - 1)}} \leq \Exp{\frac{1}{12k - 12}}.
        \end{equation}
        In particular, when $\beta n \geq 2$ these inequalities for the Gamma function yield the bounds
        \begin{equation}\label{eq:BetaBound}
            \frac{e\sqrt{2\pi}}{2 \cdot 4^{\beta n}\sqrt{2\beta n - 1}} \leq  B(\beta n, \beta n) \leq \frac{2\sqrt{2\pi}}{4^{\beta n} \sqrt{2\beta n - 1}}.
        \end{equation}
              Using these bounds for the Beta function in (\ref{eq:SpecGapExpression}) gives the desired inequalities for $\mu_1 - \mu_2$.
    \end{proof}
\end{lem}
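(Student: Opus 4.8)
The plan is to reduce the entire estimate to sufficiently sharp two-sided control of the single scalar quantity $4^{\beta n} B(\beta n, \beta n)$. First I would combine the closed forms for $\mu_1$ and $\mu_2$ from Proposition~\ref{prop:ExpFlippedProjs}: subtracting them and factoring out the common $\frac{1}{\beta n\,4^{\beta n} B(\beta n, \beta n)}$ collapses $\mu_1 - \mu_2$ into one rational expression whose numerator is an explicit polynomial in $n$ and whose only transcendental ingredient is $4^{\beta n} B(\beta n, \beta n)$. Once that product is pinned down between two explicit constant multiples of $\sqrt{2\pi}(2\beta n - 1)^{-1/2}$, the claimed inequalities and the asymptotic $(\mu_1 - \mu_2)^{-1} = O(\sqrt{n})$ follow by substitution.

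Next I would pass to Gamma functions via $B(\beta n, \beta n) = \Gamma(\beta n)^2/\Gamma(2\beta n)$ in order to apply a quantitative Stirling estimate. I would use Robbins' refinement, which for real $k \geq 2$ sandwiches $\Gamma(k)$ between $\sqrt{2\pi}(k-1)^{k-1/2}e^{-(k-1)}$ times $\Exp{1/(12k - 11)}$ and the same quantity times $\Exp{1/(12k - 12)}$. Substituting $k = \beta n$ (twice, for $\Gamma(\beta n)^2$) and $k = 2\beta n$ (for $\Gamma(2\beta n)^{-1}$), then simplifying the resulting powers and exponentials, I expect $B(\beta n, \beta n)$ to come out of the form $\sqrt{2\pi}\,4^{-\beta n}(2\beta n - 1)^{-1/2}$ up to a bounded multiplicative factor. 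Because the Stirling error exponentials are monotone in $k$, their quotient can be bounded uniformly for $\beta n \geq 2$, and I would extract the numerical constants $e/2$ and $2$ for the lower and upper bounds on $B(\beta n, \beta n)$ respectively.

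Finally I would feed these bounds back into the expression for $\mu_1 - \mu_2$: the upper bound on $B$ yields the lower bound on $\mu_1 - \mu_2$ and vice versa, with the factors of $4^{\beta n}$ cancelling to leave the stated inequalities. For the $O(\sqrt{n})$ claim I would simply note that the numerator scales like $n\cdot\sqrt{2\beta n}$ and the denominator like $\beta n \cdot 2n$, so $\mu_1 - \mu_2 = \Theta(n^{-1/2})$ and hence $(\mu_1 - \mu_2)^{-1} = \Theta(\sqrt{n})$.

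The main obstacle is not conceptual but the bookkeeping in the Stirling step: Robbins' inequality is centered at $k-1$ rather than $k$, so combining $\Gamma(\beta n)^2$ with $\Gamma(2\beta n)^{-1}$ forces me to track several competing powers of $(\beta n - 1)$ and $(2\beta n - 1)$ together with a residual exponential factor, and then to bound the quotient of the three Stirling error terms uniformly over $\beta n \geq 2$. Recovering the \emph{clean} constants $e/2$ and $2$ — rather than merely the correct $\Theta(n^{-1/2})$ order — is where the real care is required, and choosing the threshold $\beta n \geq 2$ is precisely what makes those uniform constant bounds available.
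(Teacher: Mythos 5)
Your proposal matches the paper's proof essentially step for step: both reduce the problem to the single expression $\mu_1 - \mu_2 = c_n / \bigl(\beta n (2n-1) 4^{\beta n} B(\beta n, \beta n)\bigr)$, both invoke Robbins' refinement of Stirling's formula for real $k \geq 2$ to sandwich $B(\beta n, \beta n)$ between $\frac{e\sqrt{2\pi}}{2\cdot 4^{\beta n}\sqrt{2\beta n - 1}}$ and $\frac{2\sqrt{2\pi}}{4^{\beta n}\sqrt{2\beta n - 1}}$, and both substitute these bounds back (with the inversion you note, upper bound on $B$ giving the lower bound on the gap). The bookkeeping concerns you raise are exactly the work the paper's proof carries out, so this is the same argument.
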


Now we have the tools to prove the pointwise error bound for approximate recovery of a fixed input signal using (PEP).

\begin{thm}\label{thm:Pointwise}
    Let $X \in \Proj_\F(1, 2n)$ and $\delta > 0$ be fixed. If
    \begin{equation}
        m \geq \frac{14}{3}(\mu_1 - \mu_2)^{-2}\delta^{-2}(\log(4n) + D),
    \end{equation}
    and $\cl{P} = \{P_j\}_{j=1}^m$ is an independent sequence of uniformly distributed projections in $\Proj_\F(n, 2n)$, then with probability at least $1 - \Exp{-D}$
    \begin{equation}\label{eq:mPointwise}
        \norm{\hat{X} - X} < \delta,
    \end{equation}
    where $\hat{X}$ is the solution to (PEP) with input $\Phi_\cl{P}(X)$.
\end{thm}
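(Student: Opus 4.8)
The plan is to split the argument into a \emph{deterministic} bound and a \emph{probabilistic} concentration step. The deterministic part reduces the recovery error $\norm{\hat{X} - X}$ to the operator-norm deviation $\norm{\hat{Q}_\cl{P}(X) - Q(X)}$, and the probabilistic part then controls that deviation through Lemma~\ref{lem:EmpiricalAverageConcentration}. This is exactly the structure announced in steps (2) and (3) at the start of the section.

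First I would establish the deterministic estimate
\[
    \norm{\hat{X} - X} \leq 2(\mu_1 - \mu_2)^{-1}\norm{\hat{Q}_\cl{P}(X) - Q(X)}.
\]
To do this, apply Lemma~\ref{lem:ExpectedDifferenceTraceMeasurements} with $Y = \hat{X}$, so that $X - \hat{X} = \norm{X - \hat{X}}(A - B)$ for orthogonal rank-one projections $A, B$ and $\norm{\hat{X} - X} = (\mu_1 - \mu_2)^{-1}\tr{Q(X)(A - B)}$. Writing $Q(X) = \hat{Q}_\cl{P}(X) - (\hat{Q}_\cl{P}(X) - Q(X))$ splits the right-hand side into an optimality term $\tr{\hat{Q}_\cl{P}(X)(A - B)}$ and an error term. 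The \textbf{key step} is the observation that since $X$ is feasible for (PEP) (it is positive semidefinite with $\tr{X} = 1$) and $\hat{X}$ is optimal, we have $\tr{\hat{Q}_\cl{P}(X)(\hat{X} - X)} \geq 0$; because $\hat{X} - X = \norm{X - \hat{X}}(B - A)$, this forces $\tr{\hat{Q}_\cl{P}(X)(A - B)} \leq 0$. For the error term, each of $A$ and $B$ is a rank-one projection, so $|\tr{(\hat{Q}_\cl{P}(X) - Q(X))A}| \leq \norm{\hat{Q}_\cl{P}(X) - Q(X)}$ and likewise for $B$, whence $|\tr{(\hat{Q}_\cl{P}(X) - Q(X))(A - B)}| \leq 2\norm{\hat{Q}_\cl{P}(X) - Q(X)}$. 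Combining these two bounds yields the displayed inequality, with the degenerate case $\norm{\hat{X} - X} = 0$ handled trivially.

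With the deterministic bound in hand, the remaining step is to choose $t = \tfrac{1}{2}\delta(\mu_1 - \mu_2)$ in Lemma~\ref{lem:EmpiricalAverageConcentration}. The hypothesis $m \geq \tfrac{14}{3}(\mu_1 - \mu_2)^{-2}\delta^{-2}(\log(4n) + D)$ is precisely $m \geq \tfrac{7}{6}t^{-2}(\log(4n) + D)$, so with probability at least $1 - \Exp{-D}$ we have $\norm{\hat{Q}_\cl{P}(X) - Q(X)} < t$. Feeding this into the deterministic bound gives $\norm{\hat{X} - X} < 2(\mu_1 - \mu_2)^{-1}t = \delta$, as required. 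I expect the main obstacle to be the optimality/sign argument that pins down $\tr{\hat{Q}_\cl{P}(X)(A - B)} \leq 0$: the rest is a routine combination of the cited lemmas, but one must take care that $A$ is the positive part of $X - \hat{X}$ rather than of $\hat{X} - X$, so that the inequality coming from (PEP) points in the direction needed for an upper bound.
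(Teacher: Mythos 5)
Your proposal is correct and follows essentially the same route as the paper's proof: the same deterministic reduction via Lemma~\ref{lem:ExpectedDifferenceTraceMeasurements} together with the (PEP) optimality inequality $\tr{\hat{Q}_\cl{P}(X)(\hat{X} - X)} \geq 0$, yielding $\norm{\hat{X} - X} \leq 2(\mu_1 - \mu_2)^{-1}\norm{\hat{Q}_\cl{P}(X) - Q(X)}$, followed by Lemma~\ref{lem:EmpiricalAverageConcentration} with $t = \tfrac{1}{2}(\mu_1 - \mu_2)\delta$. Your extra care with the sign convention for $A$ and $B$ and the degenerate case $\hat{X} = X$ only makes explicit what the paper leaves implicit.
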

\begin{proof}
    From Lemma~\ref{lem:ExpectedDifferenceTraceMeasurements}, we know that
    \begin{equation}
        \norm{\hat{X} - X} = (\mu_1 - \mu_2)^{-1}\tr{Q(X)(A - B)},
    \end{equation}
    where $A,B \in \Proj_\F(1, 2n)$ are the orthogonal projections from the spectral decomposition of the difference $X - \hat{X} = \norm{X - \hat{X}}(A - B)$.
    
    Since $\hat{X}$ is the projection onto the principal eigenspace of $\hat{Q}_\cl{P}(X)$, we see
    \begin{equation}
        \tr{\hat{Q}_\cl{P}(X)(\hat{X} - X)} \geq 0 \implies (\mu_1 - \mu_2)^{-1}\tr{\hat{Q}_\cl{P}(X)(B - A)} \geq 0,
    \end{equation}
    and so
    \begin{equation}\label{eq:ErrorOpNormIneq}
        \norm{\hat{X} - X} \leq (\mu_1 - \mu_2)^{-1}\tr{(Q(X) - \hat{Q}(X))(A - B)} \leq 2(\mu_1 - \mu_2)^{-1}\norm{Q(X) - \hat{Q}(X)}.
    \end{equation}
    
    We have chosen $m$ such that $m \geq \frac{7}{6}t^{-2}(\log(4n) + D)$ for $t = \half (\mu_1 - \mu_2)\delta$, so the tail bound in Lemma~\ref{lem:EmpiricalAverageConcentration} says with probability at least $1 - \Exp{-D}$ we have $\norm{\hat{Q}_\cl{P}(X) - Q(X)} < t$. If this occurs, then from (\ref{eq:ErrorOpNormIneq}) we see
    \begin{equation}
        \norm{\hat{X} - X} \leq 2(\mu_1 - \mu_2)^{-1}\norm{\hat{Q}_\cl{P}(X) - Q(X)} < \delta.
    \end{equation}
\end{proof}

\begin{figure}
    \centering
    \includegraphics[width=0.8\textwidth]{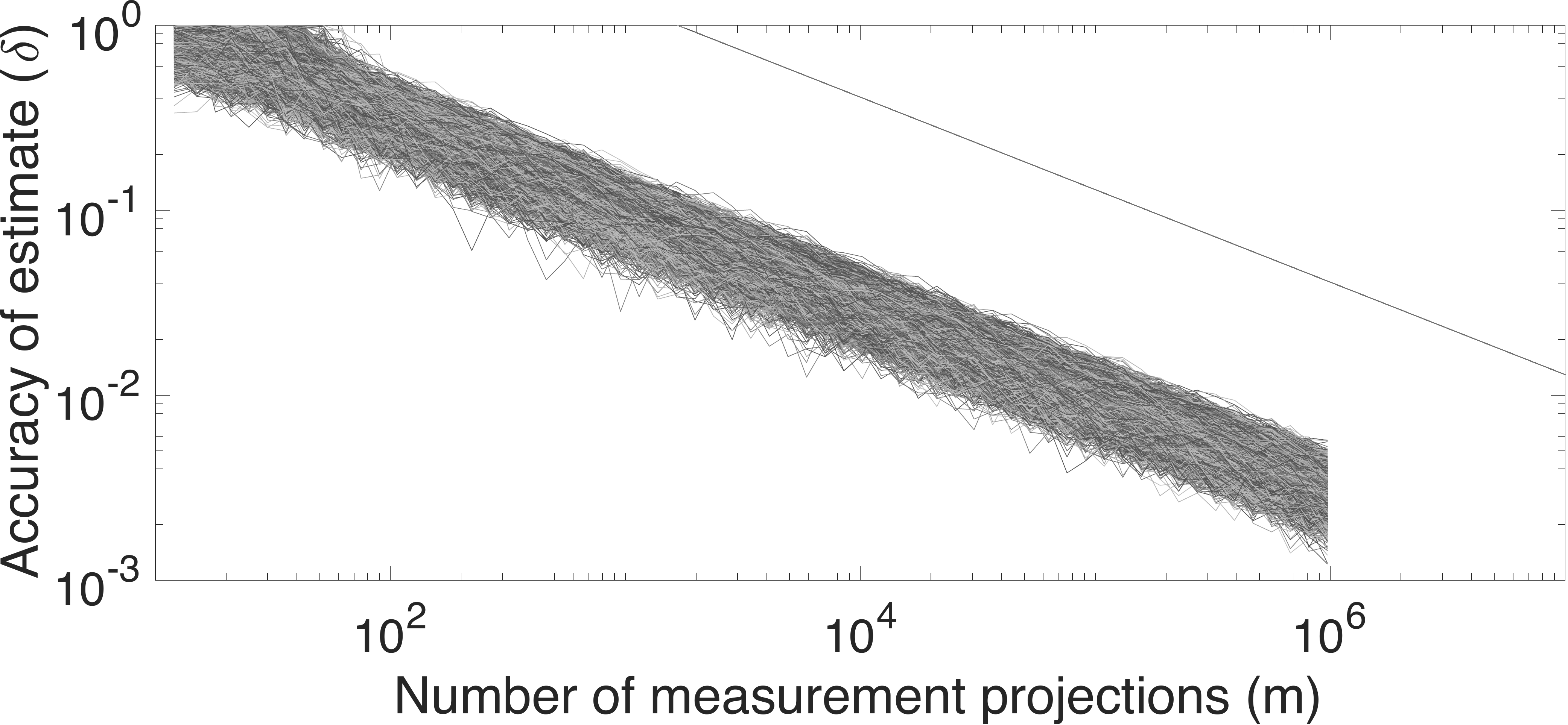}
    \caption{Plot showing the accuracy of recovery using (PEP) for a fixed input and 7200 independent collections of up to $10^6$ measurement projections on $\R^{16}$. The single line separate from the cluster represents the upper bound on $\delta$ given by Theorem~\ref{thm:Pointwise}.}
    \label{fig:PointwisePlot}
\end{figure}

See Figure~\ref{fig:PointwisePlot} for a plot showing how our bound on the sufficient number of measurements to achieve an accuracy of $\delta$ relates to experimental results.

Our proof lets us fine tune the probability of successful recovery by adjusting the value of $D$ in (\ref{eq:mPointwise}). By increasing $D$ we increase the probability of success, but also increase the sufficient number of measurements. In particular, we can take $D = \alpha\log(n)$ to ensure success with high probability, i.e. the failure rate decays on the order of $n^{-\alpha}$. To do so, we gain a constant factor that depends on $\alpha$ in the number of sufficient measurement projections $m$.

\begin{cor}\label{cor:PointwiseHighProb}
     Let $X \in \Proj_\F(1, 2n)$ and $\delta > 0$ be fixed. If $\alpha > 0$, $m \geq C_\alpha\delta^{-2}n\log(n)$, and $\cl{P} = \{P_j\}_{j=1}^m$ is an independent sequence of uniformly distributed projections in $\Proj_\F(n, 2n)$, then with probability at least $1 - n^{-\alpha}$
    \begin{equation}
        \norm{\hat{X} - X} < \delta,
    \end{equation}
    where $\hat{X}$ is the solution to (PEP) with input $\Phi_\cl{P}(X)$ and $C$ is a constant that only depends on $\alpha$.
\end{cor}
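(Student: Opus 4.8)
The plan is to deduce the corollary directly from Theorem~\ref{thm:Pointwise} by making two substitutions: choosing the free parameter $D$ to produce the stated failure probability, and replacing the spectral-gap factor $(\mu_1 - \mu_2)^{-2}$ with its dimensional bound from Lemma~\ref{lem:SpecGapBound}. First I would set $D = \alpha \log(n)$, so that the success probability $1 - \Exp{-D}$ in Theorem~\ref{thm:Pointwise} becomes exactly $1 - n^{-\alpha}$, matching the claim. With this choice, the sufficient condition on $m$ from the theorem reads
\[
    m \geq \frac{14}{3}(\mu_1 - \mu_2)^{-2}\delta^{-2}\p{\log(4n) + \alpha \log(n)}.
\]
It therefore suffices to exhibit a constant $C_\alpha$ for which the hypothesis $m \geq C_\alpha \delta^{-2} n \log(n)$ \emph{forces} the displayed inequality, at which point Theorem~\ref{thm:Pointwise} applies verbatim.

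Next I would control the two dimension-dependent factors separately. For the spectral gap, the lower bound in Lemma~\ref{lem:SpecGapBound} gives
\[
    (\mu_1 - \mu_2)^{-1} \leq \frac{\sqrt{2\pi}\,\beta n (2n - 1)}{(n-1)\sqrt{2\beta n - 1}},
\]
whose right-hand side is $O(\sqrt{n})$; squaring yields $(\mu_1 - \mu_2)^{-2} \leq c_1 n$ for an explicit constant $c_1$ that depends only on $\beta$ (hence only on the field $\F$), valid for all $n$ past the threshold $\beta n \geq 2$ required by the lemma. For the logarithmic factor, I would bound $\log(4n) + \alpha \log(n) = \log 4 + (1+\alpha)\log(n) \leq (3 + \alpha)\log(n)$ for $n \geq 2$, using $\log 4 \leq 2\log(n)$ in that range.

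Combining the two estimates, the sufficient condition from the theorem is implied whenever $m \geq \tfrac{14}{3}\, c_1 (3+\alpha)\, \delta^{-2} n \log(n)$, so I would simply set $C_\alpha = \tfrac{14}{3}\, c_1 (3+\alpha)$, which depends only on $\alpha$ (and on the field through $c_1$). The argument is essentially bookkeeping, so I do not expect a genuine obstacle; the only points requiring care are that the Beta-function bounds underlying Lemma~\ref{lem:SpecGapBound} hold only for $\beta n \geq 2$, so the stated constant is valid for all sufficiently large $n$ (the finitely many small dimensions can be absorbed into $C_\alpha$ if one wants the bound for every $n$), and that the chain of inequalities must run in the correct direction --- namely that $m \geq C_\alpha \delta^{-2} n \log(n)$ is the hypothesis that \emph{implies}, rather than follows from, the bound on $m$ in Theorem~\ref{thm:Pointwise}.
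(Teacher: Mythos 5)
Your proposal is correct and follows exactly the route the paper intends: the paper gives no separate proof of this corollary, but the preceding discussion says to take $D = \alpha\log(n)$ in Theorem~\ref{thm:Pointwise} and absorb the $O(n)$ bound on $(\mu_1-\mu_2)^{-2}$ from Lemma~\ref{lem:SpecGapBound} into the constant, which is precisely your argument. Your bookkeeping (the bound $\log(4n)+\alpha\log(n) \le (3+\alpha)\log(n)$ for $n\ge 2$, and the remark about absorbing the finitely many dimensions with $\beta n < 2$ into $C_\alpha$) is a careful and accurate execution of that plan.
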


We can also take $D = n$ in (\ref{eq:mPointwise}) to ensure success with overwhelming probability, i.e. the failure rate decays on the order of $\Exp{-n}$. In this case, we gain an additional factor of $n$ in the number of sufficient measurement projections $m$.

\begin{cor}\label{cor:PointwiseOverwhelming}
     Let $X \in \Proj_\F(1, 2n)$ and $\delta > 0$ be fixed. If $m \geq C\delta^{-2}n^2\log(n)$ and $\cl{P} = \{P_j\}_{j=1}^m$ is an independent sequence of uniformly distributed projections in $\Proj_\F(n, 2n)$, then with probability at least $1 - \Exp{-n}$
    \begin{equation}
        \norm{\hat{X} - X} < \delta,
    \end{equation}
    where $\hat{X}$ is the solution to (PEP) with input $\Phi_\cl{P}(X)$ and $C$ is a constant.
\end{cor}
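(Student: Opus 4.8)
The plan is to obtain Corollary~\ref{cor:PointwiseOverwhelming} directly from Theorem~\ref{thm:Pointwise} through the single choice $D = n$ for the deviation parameter, and then to absorb the resulting quantities into a bound of the stated form $m \geq C\delta^{-2}n^2\log(n)$ by invoking the spectral-gap estimate of Lemma~\ref{lem:SpecGapBound}. With $D = n$, Theorem~\ref{thm:Pointwise} guarantees $\norm{\hat{X} - X} < \delta$ with probability at least $1 - \Exp{-n}$ as soon as
\begin{equation*}
    m \geq \frac{14}{3}(\mu_1 - \mu_2)^{-2}\delta^{-2}(\log(4n) + n),
\end{equation*}
so the entire task reduces to showing that the right-hand side is at most $C\delta^{-2}n^2\log(n)$ for a suitable absolute constant $C$.

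First I would invoke Lemma~\ref{lem:SpecGapBound}, which provides a constant $K$, independent of both $n$ and $\delta$, such that $(\mu_1 - \mu_2)^{-2} \leq Kn$ whenever $\beta n \geq 2$ (this is exactly the $O(\sqrt{n})$ behaviour of $(\mu_1 - \mu_2)^{-1}$ recorded there). Substituting this bound gives
\begin{equation*}
    \frac{14}{3}(\mu_1 - \mu_2)^{-2}\delta^{-2}(\log(4n) + n) \leq \frac{14K}{3}\delta^{-2}n(\log(4n) + n).
\end{equation*}
Next I would observe that the linear term dominates the logarithmic one: for all $n$ sufficiently large one has $\log(4n) + n \leq 2n$, so the previous display is at most $\frac{28K}{3}\delta^{-2}n^2$. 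Setting $C = \frac{28K}{3}$, enlarging $C$ if necessary to cover the finitely many small values of $n$, and using $\log(n) \geq 1$, any $m$ satisfying $m \geq C\delta^{-2}n^2\log(n)$ also satisfies $m \geq C\delta^{-2}n^2$, hence the hypothesis of Theorem~\ref{thm:Pointwise} with $D = n$. The conclusion then follows immediately.

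Because the argument is a specialization of an already-proved theorem, there is no genuine obstacle; the only point requiring care is the bookkeeping of constants — in particular, verifying that $(\mu_1 - \mu_2)^{-2}$ contributes exactly one power of $n$ and confirming that $\log(4n) + n = O(n)$ so that no logarithmic factor is forced by the deviation term. The $\log(n)$ appearing in the statement is therefore not sharp; it is retained only so that the stated bound dominates the tight $O(\delta^{-2}n^2)$ estimate and parallels the form of Corollary~\ref{cor:PointwiseHighProb}.
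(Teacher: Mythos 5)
Your proposal is correct and follows essentially the same route as the paper, which obtains this corollary simply by setting $D = n$ in Theorem~\ref{thm:Pointwise} and invoking the spectral-gap bound $(\mu_1 - \mu_2)^{-2} = O(n)$ from Lemma~\ref{lem:SpecGapBound}. Your additional observation that the deviation term gives $\log(4n) + n = O(n)$, so that $m \geq C\delta^{-2}n^2$ already suffices and the stated $\log(n)$ factor is not sharp, is accurate and consistent with the paper's remark that one merely ``gains an additional factor of $n$'' over Corollary~\ref{cor:PointwiseHighProb}.
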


The pointwise accuracy guarantee of Theorem~\ref{thm:Pointwise} can also be thought of as an ``average case'' error bound with respect to the random sequence of measurement projections $\cl{P}$. The following corollary makes this explicit.

\begin{cor}
    Let $X \in \Proj_\F(1, 2n)$ and $\delta > 0$ be fixed. If $m \geq C\delta^{-2}n\log(n)$ and $\cl{P} = \{P_j\}_{j=1}^m$ is an independent sequence of uniformly distributed projections in $\Proj_\F(n, 2n)$, then 
    \begin{equation}
        \EV{\norm{\hat{X} - X}} < \delta,
    \end{equation}
    where $\hat{X}$ is the solution to (PEP) with input $\Phi_\cl{P}(X)$ and $C$ is a constant.
\end{cor}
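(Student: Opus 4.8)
The plan is to recycle the deterministic part of the proof of Theorem~\ref{thm:Pointwise}. The crucial observation is that the bound
\begin{equation*}
    \norm{\hat{X} - X} \leq 2(\mu_1 - \mu_2)^{-1}\norm{\hat{Q}_\cl{P}(X) - Q(X)}
\end{equation*}
established in~(\ref{eq:ErrorOpNormIneq}) holds for \emph{every} realization of $\cl{P}$, not merely on a high-probability event: it rests solely on the fact that $\hat{X}$ is the maximizer of the Rayleigh quotient, so that $\tr{\hat{Q}_\cl{P}(X)(\hat{X} - X)} \geq 0$ always, together with the spectral identity of Lemma~\ref{lem:ExpectedDifferenceTraceMeasurements}. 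Consequently the inequality may be integrated directly, with no need to isolate a failure event.

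Taking expectations of both sides and using monotonicity of the expectation gives
\begin{equation*}
    \EV{\norm{\hat{X} - X}} \leq 2(\mu_1 - \mu_2)^{-1}\EV{\norm{\hat{Q}_\cl{P}(X) - Q(X)}}.
\end{equation*}
I would then insert the \emph{expectation} bound of Lemma~\ref{lem:EmpiricalAverageConcentration}, namely $\EV{\norm{\hat{Q}_\cl{P}(X) - Q(X)}} \leq \sqrt{\log(4n)/(2m)} + \log(4n)/(3m)$, together with the spectral-gap bound $(\mu_1 - \mu_2)^{-1} = O(\sqrt{n})$ from Lemma~\ref{lem:SpecGapBound}. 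Writing $2(\mu_1-\mu_2)^{-1} \leq C'\sqrt{n}$ for an absolute constant $C'$, this yields
\begin{equation*}
    \EV{\norm{\hat{X} - X}} \leq C'\sqrt{n}\p{\sqrt{\frac{\log(4n)}{2m}} + \frac{\log(4n)}{3m}}.
\end{equation*}

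It then remains only to choose $m$. For $m \geq C\delta^{-2}n\log(n)$ the first term is of order $C'\sqrt{n}\cdot\delta/\sqrt{Cn}$, up to the bounded factor $\log(4n)/\log(n)$ which tends to $1$, while the second term is of lower order in $m$; hence taking $C$ sufficiently large (in terms of $C'$ and the explicit constant from the spectral-gap bound) forces the right-hand side strictly below $\delta$, which is the claim. There is no genuine obstacle here: the one point worth flagging is that the argument uses the expectation form of Lemma~\ref{lem:EmpiricalAverageConcentration} rather than its tail bound, and exploits that the pointwise error inequality is deterministic, so that one never has to fall back on the trivial estimate $\norm{\hat{X} - X} \leq 1$ valid for rank-one projections. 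The bookkeeping of the final constant $C$ is then routine.
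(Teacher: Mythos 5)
Your proposal is correct and takes essentially the same route as the paper: the paper's proof is precisely to take expectations on both sides of (\ref{eq:ErrorOpNormIneq}) and apply the expectation bound of Lemma~\ref{lem:EmpiricalAverageConcentration}, with the spectral-gap estimate of Lemma~\ref{lem:SpecGapBound} absorbed into the constant $C$. Your observation that (\ref{eq:ErrorOpNormIneq}) holds pathwise (so the expectation can be taken directly, with no failure event to handle) is exactly the point the paper's one-line proof implicitly relies on.
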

\begin{proof}
    Take the expectation on both sides of (\ref{eq:ErrorOpNormIneq}) with respect to the random sequence of projections $\cl{P}$ and use the expectation bound from Lemma~\ref{lem:EmpiricalAverageConcentration}.
\end{proof}

We also remark that recovery using (PEP) is robust to bit-flip errors in the binary measurement, which can be seen via a small addition to the proof of Theorem~\ref{thm:Pointwise}. 
To this end, we consider a faulty measurement $\tilde{\Phi}_{\cl P}$ with the property that the normalized Hamming distance between the faulty and correct measurements
is bounded by a fixed fraction
$d_H(\tilde{\Phi}_{\cl P}(X), {\Phi}_\cl{P}(X)) \leq \tau$.

\begin{cor}
    Let $X$, $\delta$, $m$, and $\{P_j\}_{j=1}^m$ be as in Theorem~\ref{thm:Pointwise}, and fix $0 < \tau < 1$. Then with probability at least $1 - \Exp{-D}$, for all $\tilde{\Phi}_\cl{P}(X) \in \{0,1\}^m$ such that
    \begin{equation}
        d_H(\Phi_\cl{P}(X), \tilde{\Phi}_\cl{P}(X)) \leq \tau,
    \end{equation}
    we have
    \begin{equation}
        \norm{\tilde{X} - X} \leq \delta + 2(\mu_1 - \mu_2)^{-1}\tau,
    \end{equation}
    where $\tilde{X}$ denote the solution to (PEP) with input $\tilde{\Phi}_\cl{P}(X)$
    and $\mu_1-\mu_2$ is controlled by Lemma~\ref{lem:SpecGapBound}. 
    
\end{cor}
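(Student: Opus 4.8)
The plan is to adapt the proof of Theorem~\ref{thm:Pointwise} by tracking how a bounded fraction of flipped bits perturbs the empirical average $\hat{Q}_\cl{P}$. The key observation is that the recovery $\tilde X$ now maximizes $\tr{\hat{Q}_\cl{P}(X)Y}$ where $\hat{Q}_\cl{P}(X)$ has been corrupted: for each $j$ whose bit is flipped, the proximally flipped projection $\hat P_j(X)$ is replaced by its complement $I - \hat P_j(X)$. Let $\tilde{Q}$ denote the faulty empirical average and $\hat{Q}_\cl{P}(X)$ the correct one. Since flipping the $j$th bit changes the summand from $\hat P_j(X)$ to $I - \hat P_j(X)$, the difference is $I - 2\hat P_j(X)$, an operator of norm $1$. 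Because at most $\tau m$ bits are flipped, I would bound $\norm{\tilde Q - \hat{Q}_\cl{P}(X)} \le \frac{1}{m}\sum_{j \text{ flipped}} \norm{I - 2\hat P_j(X)} \le \tau$.

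First I would repeat the argument leading to inequality~(\ref{eq:ErrorOpNormIneq}) verbatim, but with $\tilde X$ in place of $\hat X$ and $\tilde Q$ in place of $\hat{Q}_\cl{P}(X)$. Since $\tilde X$ is the principal eigenspace projection of $\tilde Q$, the same optimality argument gives
\begin{equation*}
    \norm{\tilde X - X} \le 2(\mu_1 - \mu_2)^{-1}\norm{Q(X) - \tilde Q}.
\end{equation*}
Then I would split by the triangle inequality,
\begin{equation*}
    \norm{Q(X) - \tilde Q} \le \norm{Q(X) - \hat{Q}_\cl{P}(X)} + \norm{\hat{Q}_\cl{P}(X) - \tilde Q} \le \norm{Q(X) - \hat{Q}_\cl{P}(X)} + \tau,
\end{equation*}
using the bit-flip bound from the previous paragraph for the second term. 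On the high-probability event from Lemma~\ref{lem:EmpiricalAverageConcentration} (which is exactly the event used in Theorem~\ref{thm:Pointwise}, occurring with probability at least $1 - \Exp{-D}$ under the stated choice of $m$), the first term is at most $t = \half(\mu_1 - \mu_2)\delta$. Substituting gives $\norm{\tilde X - X} \le \delta + 2(\mu_1 - \mu_2)^{-1}\tau$, which is the claim.

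The main point requiring care is the uniformity over \emph{all} faulty measurements $\tilde\Phi_\cl{P}(X)$ within Hamming radius $\tau$, including adversarial ones. This is handled cleanly because the bound $\norm{\hat{Q}_\cl{P}(X) - \tilde Q} \le \tau$ holds deterministically for \emph{every} such $\tilde\Phi_\cl{P}(X)$ once the projections $\cl{P}$ are fixed: the estimate depends only on the number of flipped bits, not on which pattern is chosen. Consequently no additional randomness or union bound is needed, and the single high-probability event controlling $\norm{Q(X) - \hat{Q}_\cl{P}(X)}$ suffices simultaneously for all admissible $\tilde\Phi_\cl{P}(X)$. The spectral gap factor $(\mu_1 - \mu_2)^{-1} = O(\sqrt{n})$ is controlled by Lemma~\ref{lem:SpecGapBound}, so I would cite that to explain the dimensional scaling of the additive error term.
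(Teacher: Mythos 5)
Your proof is correct and follows essentially the same route as the paper's: the eigenspace-perturbation bound $\norm{\tilde{X} - X} \leq 2(\mu_1 - \mu_2)^{-1}\norm{\tilde{Q}_\cl{P}(X) - Q(X)}$, a triangle-inequality split through $\hat{Q}_\cl{P}(X)$, the deterministic bound $\norm{\tilde{Q}_\cl{P}(X) - \hat{Q}_\cl{P}(X)} \leq \tau$, and the concentration event from Theorem~\ref{thm:Pointwise}. Your explicit justification that each flipped bit perturbs the average by $\frac{1}{m}(I - 2\hat{P}_j(X))$ of norm one, and your remark that the $\tau$ bound holds deterministically for every admissible bit-flip pattern (so no union bound is needed), are details the paper leaves implicit but are entirely consistent with its argument.
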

\begin{proof}
    Let $\tilde{Q}_\cl{P}(X)$ denote the empirical average of the (faulty) flipped projections, i.e. flipped using $\tilde{\Phi}_\cl{P}(X)$ rather than $\Phi_\cl{P}(X)$. Then as before, we have
    \begin{equation}
        \norm{\tilde{X} - X} \leq 2(\mu_1 - \mu_2)^{-1}\norm{\tilde{Q}_\cl{P}(X) - Q(X)}.
    \end{equation}
    By the triangle inequality, it follows that
    \begin{equation}
        \norm{\tilde{Q}_\cl{P}(X) - Q(X)} \leq \norm{\tilde{Q}_\cl{P}(X) - \hat{Q}_\cl{P}(X)} + \norm{\hat{Q}_\cl{P}(X) - Q(X)}.
    \end{equation}
    Since the normalized Hamming distance between $\Phi_\cl{P}(X)$ and $\tilde{\Phi}_\cl{P}(X)$ is bounded by $\tau$, we see
    \begin{equation}
        \norm{\tilde{Q}_\cl{P}(X) - \hat{Q}_\cl{P}(X)} \leq \tau.
    \end{equation}
    Since $\norm{\hat{Q}_\cl{P}(X) - Q(X)} \leq \delta$ with probability at least $1 - \Exp{-D}$ by the same proof as in Theorem~\ref{thm:Pointwise}, the result follows.
\end{proof}

We expect that a deeper analysis will reveal a better dependence on the error rate, or perhaps eliminate the dimension dependent factor $(\mu_1-\mu_2)^{-1}$.

\section{From pointwise to uniformly accurate recovery}\label{sec:Uniform}

In this section we extend the result from Theorem \ref{thm:Pointwise} to show that the recovery error using (PEP) is small uniformly across all input vectors $X \in \Proj_\F(1,2n)$ for a single random binary measurement $\Phi_\cl{P}$. Our strategy consists of the following steps:
\begin{itemize}
    \item[(1)] Using sufficiently many random projections, $\hat{Q}_\cl{P}(X)$ concentrates near $Q(X)$ for all $X$ in an $\epsilon$-net of $\Proj_\F(1, 2n)$.
    \item[(2)] With high probability the measurement Hamming distance between a pair $X, Y \in \Proj_\F(1, 2n)$ is not much larger than $\norm{X - Y}$, uniformly for all such pairs.
    \item[(3)] The eigenspace of $\hat{Q}_\cl{P}(X)$ corresponding to its largest eigenvalue concentrates near $X$ uniformly for all $X \in \Proj_\F(1, 2n)$.
\end{itemize}

\subsection{Concentration of $\hat{Q}_\cl{P}(X)$ near $Q(X)$ uniformly on a net}\label{subsec:ConcEmpAvgNet}

First, we show an inequality relating the Euclidean distance between unit vectors to the operator norm distance between their associated rank-one projections.

\begin{lem}\label{lem:NormInequality}
    Let $d \in \N$. Then for all $x, y \in \S_\F^{d-1}$,
    \begin{equation}
        \norm{xx^* - yy^*} \leq \norm{x - y}_2.
    \end{equation}
\end{lem}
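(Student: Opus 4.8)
The plan is to prove the inequality $\norm{xx^* - yy^*} \leq \norm{x-y}_2$ by exploiting the special structure of the difference of two rank-one projections, which was already noted in the excerpt: for rank-one projections $X = xx^*$ and $Y = yy^*$, the difference $X - Y$ is a trace-zero self-adjoint operator of rank at most two, so its operator norm equals $\sin(\theta)$, where $\theta$ is the principal angle between the lines spanned by $x$ and $y$. This is exactly the quantity computed inside the proof of Lemma~\ref{lem:ExpectedDifferenceTraceMeasurements}. Thus the left-hand side is $\norm{xx^* - yy^*} = \sin(\theta)$, and the task reduces to showing $\sin(\theta) \leq \norm{x - y}_2$.

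First I would recall or rederive the eigenvalue structure of $X - Y$. Writing $y = \cos(\theta)\, x + \sin(\theta)\, z$ with $x \perp z$ and $x, z \in \S_\F^{d-1}$ (as in the proof of Lemma~\ref{lem:ExpectedDifferenceTraceMeasurements}), one checks that $X - Y$ acts nontrivially only on the two-dimensional span of $x$ and $z$, where it has eigenvalues $\pm \sin(\theta)$; hence $\norm{X - Y} = \sin(\theta)$. The one subtlety in the complex case is that the phase of $x$ and $y$ can be chosen to minimize $\norm{x - y}_2$: replacing $y$ by $e^{i\psi} y$ leaves $Y = yy^*$ unchanged but changes $\norm{x-y}_2$. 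I would therefore observe that $\norm{xx^* - yy^*}$ depends only on the lines, while $\norm{x - y}_2$ is smallest when the relative phase is aligned so that $\ip{x}{y}$ is real and nonnegative, equal to $\abs{\ip{x}{y}} = \cos(\theta)$; since the inequality must hold for \emph{all} unit representatives $x, y$, it suffices to verify it for this worst case (smallest right-hand side), and it then holds a fortiori for every other phase.

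With $\abs{\ip{x}{y}} = \cos(\theta)$, the remaining step is the elementary estimate
\begin{equation*}
    \norm{x - y}_2^2 = 2 - 2\,\mathrm{Re}\ip{x}{y} \geq 2 - 2\abs{\ip{x}{y}} = 2 - 2\cos(\theta) \geq \sin^2(\theta),
\end{equation*}
where the last inequality is the scalar fact $2 - 2\cos\theta \geq \sin^2\theta$, equivalently $(1 - \cos\theta)(1 - \cos\theta) \geq 0$ after writing $\sin^2\theta = (1-\cos\theta)(1+\cos\theta)$ and comparing with $2(1-\cos\theta) = (1-\cos\theta)\cdot 2$; since $1 + \cos\theta \leq 2$, the claim follows. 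Taking square roots gives $\sin(\theta) \leq \norm{x-y}_2$, which is the desired bound.

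I do not expect a genuine obstacle here, as the inequality is essentially a two-dimensional trigonometric fact dressed up in operator language; the only point requiring care is the complex phase ambiguity described above, ensuring the comparison is made against the worst-case (phase-aligned) representatives so that the bound is valid for arbitrary unit vectors $x, y$. An alternative, fully coordinate-free route would bound $\norm{xx^* - yy^*}$ directly via $xx^* - yy^* = x(x-y)^* + (x-y)y^*$ and the triangle inequality, but this yields the weaker constant $2$; since we want the sharp constant $1$, the angle-based argument above is preferable.
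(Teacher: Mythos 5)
Your proposal is correct and takes essentially the same route as the paper's proof: identify $\norm{xx^* - yy^*} = \sin(\theta)$ with $\theta$ the principal angle, then chain $\norm{x - y}_2^2 = 2 - 2\Re\ip{x}{y} \geq 2 - 2\abs{\ip{x}{y}} = 2 - 2\cos(\theta) \geq (1 + \cos(\theta))(1 - \cos(\theta)) = \sin^2(\theta)$. Your extra reduction to phase-aligned representatives is harmless but unnecessary, since the step $\Re\ip{x}{y} \leq \abs{\ip{x}{y}}$ already covers every choice of unimodular phase at once, exactly as in the paper.
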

\begin{proof}
    Let $\theta$ be the principal angle between the subspaces associated to $xx^*$ and $yy^*$, and recall $\norm{xx^* - yy^*} = \sin(\theta)$. Thus
    \begin{equation*}
        \norm{x - y}_2^2 = \ip{x - y}{x - y} = 2 - 2\Re{\ip{x}{y}} \geq 2 - 2\abs{\ip{x}{y}} = 2 - 2\cos(\theta).
    \end{equation*}
    Since $\theta \in [0, \frac{\pi}{2}]$ we know $0 \leq \cos(\theta) \leq 1$ and so
    \begin{equation*}
        2 - 2\cos(\theta) = 2(1 - \cos) \geq (1 + \cos(\theta))(1 - \cos(\theta)) = \sin^2(\theta) = \norm{xx^* - yy^*}^2.
    \end{equation*}
\end{proof}

Next, we use Lemma~\ref{lem:NormInequality} to prove the existence of $\epsilon$-nets of $\Proj_\F(1, 2n)$ with explicit cardinality bounds. This follows from the analogous results for $\epsilon$-nets of $\S_\F^{2n-1}$.

\begin{lem}\label{lem:ProjNetSize}
    For any $\epsilon > 0$, there exists an $\epsilon$-net $\cl{N}_\epsilon$ for $\Proj_\F(1, 2n)$ with respect to the operator norm with cardinality satisfying
    \begin{equation}
        \log\abs{\cl{N}_\epsilon} \leq 4\beta n\log(1 + 2\epsilon^{-1}).
    \end{equation}
\end{lem}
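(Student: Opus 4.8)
The plan is to transfer an $\epsilon$-net for the sphere $\S_\F^{2n-1}$ to an $\epsilon$-net for $\Proj_\F(1,2n)$ using Lemma~\ref{lem:NormInequality}, which tells us that the map $x \mapsto xx^*$ is Lipschitz with constant $1$ from the Euclidean metric on the sphere to the operator-norm metric on projections. Since every element of $\Proj_\F(1,2n)$ is of the form $xx^*$ for some $x \in \S_\F^{2n-1}$, a covering of the sphere will pull back to a covering of the projection space with no increase in the covering radius.

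First I would invoke the standard volumetric bound for $\epsilon$-nets of the unit sphere. The real sphere $\S_\R^{d-1}$ admits an $\epsilon$-net of cardinality at most $(1 + 2\epsilon^{-1})^d$, and identifying $\C^{2n}$ with $\R^{4n}$ shows that $\S_\F^{2n-1}$ admits an $\epsilon$-net $\cl{M}_\epsilon$ with $\abs{\cl{M}_\epsilon} \leq (1 + 2\epsilon^{-1})^{4\beta n}$, since the real dimension of $\F^{2n}$ is $2n$ when $\F = \R$ (so $4\beta n = 2n$) and $4n$ when $\F = \C$ (so $4\beta n = 4n$). Taking logarithms gives $\log\abs{\cl{M}_\epsilon} \leq 4\beta n\log(1 + 2\epsilon^{-1})$, which already matches the target bound.

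Next I would set $\cl{N}_\epsilon := \ps{xx^* : x \in \cl{M}_\epsilon}$ and verify it is an $\epsilon$-net for $\Proj_\F(1,2n)$. Given any $X = yy^* \in \Proj_\F(1,2n)$ with $y \in \S_\F^{2n-1}$, there is some $x \in \cl{M}_\epsilon$ with $\norm{x - y}_2 \leq \epsilon$; by Lemma~\ref{lem:NormInequality} we then have $\norm{xx^* - yy^*} \leq \norm{x - y}_2 \leq \epsilon$, so $xx^* \in \cl{N}_\epsilon$ lies within $\epsilon$ of $X$ in operator norm. Since $\abs{\cl{N}_\epsilon} \leq \abs{\cl{M}_\epsilon}$, the cardinality bound is inherited directly.

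There is essentially no hard step here; the only thing to be careful about is bookkeeping the real dimension correctly across the two fields, which the factor $\beta$ handles uniformly, and noting that passing from $\cl{M}_\epsilon$ to $\cl{N}_\epsilon$ can only decrease cardinality (it is possible, though irrelevant to the bound, that distinct net points $x, x'$ give $xx^* = x'(x')^*$). The subtlety that the quotient map may identify points means $\cl{N}_\epsilon$ is genuinely a net for the projective-type space, but since Lemma~\ref{lem:NormInequality} is a one-sided inequality, the covering property transfers cleanly in the direction we need. I would therefore expect the whole argument to occupy only a few lines.
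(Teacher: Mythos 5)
Your proposal is correct and follows exactly the paper's own argument: the same volumetric bound $(1+2\epsilon^{-1})^{4\beta n}$ for an $\epsilon$-net of $\S_\F^{2n-1}$ (via the isometry $\S_\C^{2n-1}\cong\S_\R^{4n-1}$), followed by pushing the net through $x\mapsto xx^*$ and invoking Lemma~\ref{lem:NormInequality} to preserve the covering radius. The dimension bookkeeping with $\beta$ and the observation that the image can only lose cardinality match the paper's proof; no gaps.
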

\begin{proof}
    By the standard volume bound for the covering number of the sphere in real euclidean space \cite{BaraniukDavenportEtAl:2008}, and the fact that $\S_\C^{2n-1}$ is naturally isometric to $\S_\R^{4n-1}$, for every $\epsilon > 0$ there exists an $\epsilon$-net $\cl{N}'_\epsilon$ for $\S_\F^{2n-1}$ (with respect to the Euclidean distance) with cardinality satisfying
    \begin{equation*}
        \abs{\cl{N}'_\epsilon} \leq \left(1 + \frac{2}{\epsilon}\right)^{4\beta n}.
    \end{equation*}
    By Lemma~\ref{lem:NormInequality}, $\cl{N}_\epsilon := \{xx^*: x \in \cl{N}_\epsilon'\}$ is an $\epsilon$-net for $\Proj_\F(1, 2n)$ with the desired cardinality bound.
\end{proof}

Now that we have existence of epsilon-nets with control on their cardinality, we use a union bound and Lemma~\ref{lem:EmpiricalAverageConcentration} to show that with sufficiently many measurements, $\hat{Q}_\cl{P}(X)$ concentrates near $Q(X)$ uniformly for all $X$ in an epsilon-net of $\Proj_\F(1, 2n)$.

\begin{lem}\label{lem:EmpAvgConcNet}
    Let $\epsilon > 0$ and $\cl{N}_\epsilon$ be an $\epsilon$-net of $\Proj_\F(1,2n)$ such that $\log\abs{\cl{N}_\epsilon} \leq 4\beta n\log(1 + 2\epsilon^{-1})$. Also, let $\delta > 0$, $m \geq \frac{7}{6}\delta^{-2}\pb{\log(4n) + 4\beta n\log(1 + 2\epsilon^{-1}) + D}$, and $\cl{P} = \{P_j\}_{j=1}^m$ be an independent sequence of uniformly distributed projections in $\Proj_\F(n,2n)$. Then with probability at least $1 - \Exp{-D}$ we have
    \begin{equation}
        \norm{\hat{Q}_\cl{P}(X) - Q(X)} \leq \delta
    \end{equation}
    for all $X \in \cl{N}_\epsilon$.
\end{lem}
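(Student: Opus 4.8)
The plan is a direct union bound over the net, built on the single-point concentration estimate already proved in Lemma~\ref{lem:EmpiricalAverageConcentration}. First I would fix an arbitrary $X \in \cl{N}_\epsilon$ and invoke the tail bound of that lemma with threshold $t = \delta$, which gives
\[
    \Prob{\norm{\hat{Q}_\cl{P}(X) - Q(X)} \geq \delta} \leq 4n\Exp{-\frac{6\delta^2 m}{7}}.
\]
Because the lemma holds for \emph{any} fixed rank-one projection and the measurement collection $\cl{P}$ is the same for every target, this estimate is available verbatim at each element of the net.

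Next I would apply the union bound across all $X \in \cl{N}_\epsilon$, obtaining
\[
    \Prob{\exists X \in \cl{N}_\epsilon:\ \norm{\hat{Q}_\cl{P}(X) - Q(X)} \geq \delta} \leq \abs{\cl{N}_\epsilon}\cdot 4n\Exp{-\frac{6\delta^2 m}{7}}.
\]
Taking logarithms, the right-hand side is at most $\Exp{-D}$ exactly when $\frac{6\delta^2 m}{7} \geq \log(4n) + \log\abs{\cl{N}_\epsilon} + D$, i.e. when $m \geq \frac{7}{6}\delta^{-2}\p{\log(4n) + \log\abs{\cl{N}_\epsilon} + D}$.

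Finally I would absorb the cardinality using the hypothesis $\log\abs{\cl{N}_\epsilon} \leq 4\beta n\log(1 + 2\epsilon^{-1})$: the requirement above is then implied by the assumed lower bound $m \geq \frac{7}{6}\delta^{-2}\pb{\log(4n) + 4\beta n\log(1 + 2\epsilon^{-1}) + D}$. Hence with probability at least $1 - \Exp{-D}$ the complementary event holds, namely $\norm{\hat{Q}_\cl{P}(X) - Q(X)} < \delta$ for all $X \in \cl{N}_\epsilon$ simultaneously, which yields the claim.

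I do not expect a genuine obstacle in this lemma: the entire content is the observation that the net size enters only through its logarithm, so passing from a pointwise guarantee to one that is uniform on the net costs merely the additive term $\log\abs{\cl{N}_\epsilon}$ in the exponent, and the stated lower bound on $m$ is engineered precisely to absorb it. The only point requiring a moment of care is confirming that the single-point tail bound of Lemma~\ref{lem:EmpiricalAverageConcentration} transfers without change to each $X$ in the net with the same $\delta$, which it does since that lemma is stated for an arbitrary fixed $X \in \Proj_\F(1,2n)$.
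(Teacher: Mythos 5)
Your proposal is correct and follows essentially the same route as the paper: the single-point tail bound of Lemma~\ref{lem:EmpiricalAverageConcentration}, a union bound over the net, and absorption of $\log\abs{\cl{N}_\epsilon}$ via the hypothesis on $m$. The only cosmetic difference is that the paper folds the net cardinality into the exponent $D' = 4\beta n\log(1+2\epsilon^{-1}) + D$ before taking the union bound, whereas you carry the raw tail bound through and do the algebra at the end; the computations are identical.
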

\begin{proof}
    By Lemma~\ref{lem:EmpiricalAverageConcentration} and our assumption on $m$, for each $X \in \cl{N}_\epsilon$ we know
    \begin{equation*}
        \Prob{\norm{\hat{Q}_\cl{P}(X) - Q(X)} \geq \delta} \leq \Exp{-4\beta n\log(1 + 2\epsilon^{-1}) - D}.
    \end{equation*}
    By taking a union bound over all $X \in \cl{N}_\epsilon$ it follows that
    \begin{equation*}
        \Prob{\norm{\hat{Q}_\cl{P}(X) - Q(X)} \leq t \text{ for all } X \in \cl{N}_\epsilon} \geq 1 - \abs{\cl{N}_\epsilon}\Exp{-4\beta n\log(1 + 2\epsilon^{-1}) - D}.
    \end{equation*}
    The claim follows from our upper bound on $\abs{\cl{N}_\epsilon}$.
\end{proof}

\subsection{Relation between the measurement Hamming distance and operator norm distance}\label{subsec:SoftHammingArgument}

The main goal of this section is to prove our guarantee for uniformly accurate recovery, Theorem~\ref{thm:UnifHammingOpIneq}: With sufficiently many measurements, with high probability the measurement Hamming distance between any pair $X,Y \in \Proj_\F(1, 2n)$ is not much larger than the operator norm of their difference. 
It is relatively simple to show that this happens for fixed $X$ and $Y$, but showing that it holds uniformly for all such pairs requires more complicated techniques. 
To this end, we will define the \emph{$t$-soft Hamming distance} similarly as in Plan and Vershynin's \emph{Dimension reduction by random hyperplane tessellations} \cite{PlanVershynin:2014}.
We establish a continuity property and concentration results for the $t$-soft Hamming distance, which allow us to show uniform concentration of the measurement Hamming distance near its expected value over all of $\Proj_\F(1, 2n)$.
We then show that $\EV{d_\cl{P}(X,Y)}$ can be bounded in terms of $\norm{X - Y}$, after which Theorem $\ref{thm:UnifHammingOpIneq}$ follows.

\subsubsection{The $t$-soft Hamming distance and its continuity properties}
For any $X,Y \in \Proj_\F(1, 2n)$ let $\cl{S}_{X,Y} := \{P \in \Proj_\F(n, 2n): \phi_P(X) \neq \phi_P(Y)\}$, i.e. the set of projections that yield different measurements of $X$ and $Y$.
If $P \in \cl{S}_{X,Y}$, then we say that $P$ \emph{separates} $X$ and $Y$.
For a sequence $\cl{P} = \{P_j\}_{j=1}^m \subset \Proj_\F(n, 2n)$, notice that $d_\cl{P}(X, Y) = \frac{1}{m}\sum_{j=1}^m \ind_{\cl{S}_{X,Y}}(P_j)$.

With this expression for the measurement Hamming distance in mind, we define
\begin{align}
    \cl{S}^t_{X,Y} :=\ &\{P \in \Proj_\F(n, 2n): \tr{PX} + t < \half \leq \tr{PY} - t\}\\
    &\cup \{P \in \Proj_\F(n, 2n): \tr{PY} + t < \half \leq \tr{PX} - t\}\nonumber
\end{align}
for all $t \in \R$, and if $P \in \cl{S}_{X,Y}^t$ then we say $P$ \emph{$t$-separates} $X$ and $Y$. 

\begin{definition}
Given a sequence of orthogonal projections $\cl{P}=\{P_j\}_{j=1}^m$ in $\Proj_\F(n, 2n)$ and $t \in \R$, we define the \emph{$t$-soft Hamming distance} between input projections $X,Y \in \Proj_\F(1, 2n)$ to be
\begin{equation}
    d_\cl{P}^t(X,Y) := \frac{1}{m}\sum_{j=1}^m \ind_{\cl{S}_{X,Y}^t}(P_j).
\end{equation}
\end{definition}

% For a sequence $\cl{P} = \{P_j\}_{j=1}^m \subset \Proj_\F(n, 2n)$ and $t > 0$, we can now define the \emph{$t$-soft Hamming distance} between $X, Y \in \Proj_\F(1, 2n)$ to be $d_\cl{P}^t(X, Y) = \frac{1}{m}\sum_{j=1}^m \ind_{\cl{S}_{X,Y}^t}(P_j)$. With this notation, notice that $\cl{S}_{X,Y}^0 = \cl{S}_{X,Y}$ and hence $d_\cl{P}^0(X,Y) = d_\cl{P}(X,Y)$.

Ultimately we want to prove uniform results for the measurement Hamming distance, but its discontinuity causes problems with standard $\epsilon$-net arguments. The $t$-soft Hamming distance helps us work around this discontinuity, where the parameter $t$ determines how strict the criteria should be for determining if the measurements of two vectors are different. This is reflected in the fact that for $t_1 \leq 0 \leq t_2$ we have $\cl{S}^{t_2}_{X,Y} \subset \cl{S}_{X,Y} \subset \cl{S}^{t_1}_{X,Y}$.

The addition of this extra parameter lets us formulate a type of continuity for $d_\cl{P}^t(X,Y)$ where both $t$ and the projections $X$ and $Y$ are allowed to vary. If we want to perturb the projections $X,Y$ by a small amount in operator norm, then we can make up for it by slightly increasing/decreasing the parameter $t$. 
% The statement and proof of this proposition are exactly analogous to those found in Plan and Vershynin's \emph{Dimension reduction by random hyperplane tessellations}.

\begin{prop}\label{prop:tSoftOpNormCont}
    Let $\cl{P} = \{P_j\}_{j=1}^m$ be a sequence of projections in $\Proj_\F(n, 2n)$, $t \in \R$, $\epsilon > 0$, and $X_0, Y_0, X, Y \in \Proj_\F(1, 2n)$ such that $\norm{X - X_0} < \epsilon$ and $\norm{Y - Y_0} < \epsilon$. Then
    \begin{equation}
        d_\cl{P}^{t + \epsilon}(X, Y) \leq d_\cl{P}^t(X_0, Y_0) \leq d_\cl{P}^{t - \epsilon}(X,Y)
    \end{equation}
\end{prop}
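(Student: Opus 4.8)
The plan is to prove the two inequalities separately by showing a containment of the relevant separating sets, which immediately yields the inequality after summing the indicators and dividing by $m$. Concretely, since $d_\cl{P}^t(X,Y) = \frac{1}{m}\sum_{j=1}^m \ind_{\cl{S}_{X,Y}^t}(P_j)$, it suffices to prove the set inclusions
\begin{equation*}
    \cl{S}_{X,Y}^{t+\epsilon} \subseteq \cl{S}_{X_0,Y_0}^{t} \subseteq \cl{S}_{X,Y}^{t-\epsilon},
\end{equation*}
because $\ind_{A}(P) \leq \ind_{B}(P)$ for all $P$ whenever $A \subseteq B$. Only the left inclusion needs genuine work; the right inclusion follows from the left one by relabeling, as I explain below.

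The key mechanism is that perturbing $X$ to $X_0$ shifts each trace $\tr{PX}$ by at most $\epsilon$. First I would establish the elementary estimate $\abs{\tr{P(X - X_0)}} \leq \epsilon$ for every $P \in \Proj_\F(n,2n)$: since $P$ is a projection, $\abs{\tr{P(X-X_0)}} = \abs{\ip{P}{X - X_0}_{HS}} \leq \norm{P}_{HS}\,\norm{X-X_0}_{HS}$ is one route, but cleaner is to use that $\tr{PZ} \leq \norm{Z}\tr{P} $-type bounds are not quite what we want; instead note $X - X_0$ is a rank-two self-adjoint operator with $\norm{X-X_0} < \epsilon$, so $\abs{\tr{P(X-X_0)}} \leq \tr{P\,\abs{X-X_0}} \leq \norm{X - X_0}\,\tr{P \cdot (A+B)} \leq 2\norm{X-X_0}$ using the spectral decomposition $X - X_0 = \norm{X-X_0}(A-B)$ from the discussion preceding Lemma~\ref{lem:ExpectedDifferenceTraceMeasurements}. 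Since this gives a factor of $2$ rather than $1$, I would instead argue directly: $\tr{PX_0} = \tr{PX} + \tr{P(X_0 - X)}$, and bounding the correction term by $\epsilon$ is exactly the crux. I would settle this by the estimate $\abs{\tr{P(X-X_0)}} \leq \norm{X - X_0} < \epsilon$, which holds because $P \succeq 0$ with $\norm{P} = 1$ implies $\abs{\tr{PW}} \leq \norm{W}$ for self-adjoint $W$ only when $\tr{P} \leq 1$; this is false for rank-$n$ projections, so the correct bound must come from the rank-two structure of $W = X - X_0$.

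Granting the estimate $\abs{\tr{PX_0} - \tr{PX}} \leq \epsilon$ and likewise for $Y$, the left inclusion is a direct threshold-chasing argument. Suppose $P \in \cl{S}_{X,Y}^{t+\epsilon}$; by symmetry of the definition assume $\tr{PX} + (t+\epsilon) < \half \leq \tr{PY} - (t+\epsilon)$. Then
\begin{equation*}
    \tr{PX_0} + t \leq \tr{PX} + \epsilon + t < \half \leq \tr{PY} - \epsilon - t \leq \tr{PY_0} + \epsilon - \epsilon - t = \tr{PY_0} - t,
\end{equation*}
so $P$ $t$-separates $X_0$ and $Y_0$, giving $P \in \cl{S}_{X_0,Y_0}^t$. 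The right inclusion $\cl{S}_{X_0,Y_0}^t \subseteq \cl{S}_{X,Y}^{t-\epsilon}$ follows by applying the already-proved left inclusion with the roles of the perturbed and base projections swapped: the hypotheses $\norm{X-X_0} < \epsilon$, $\norm{Y-Y_0}<\epsilon$ are symmetric in $(X,Y) \leftrightarrow (X_0,Y_0)$, so replacing $t$ by $t-\epsilon$ and reading the inclusion the other direction closes the chain.

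The main obstacle I anticipate is pinning down the perturbation estimate $\abs{\tr{P(X-X_0)}} \leq \norm{X-X_0}$ with constant exactly $1$ rather than $2$, since the naive bound for a rank-$n$ projection would lose a factor. The resolution must exploit that $X - X_0$ has rank two and trace zero, writing $X - X_0 = \norm{X - X_0}(A - B)$ so that $\tr{P(X-X_0)} = \norm{X-X_0}\bigl(\tr{PA} - \tr{PB}\bigr)$, and then observing $\tr{PA}, \tr{PB} \in [0,1]$ forces $\abs{\tr{PA} - \tr{PB}} \leq 1$. This is precisely the structural fact that makes the constant work out to $\epsilon$ and not $2\epsilon$, and it is the one step requiring care.
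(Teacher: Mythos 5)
Your proof is correct and takes essentially the same approach as the paper's: both establish the set inclusion $\cl{S}_{X,Y}^{t+\epsilon} \subseteq \cl{S}_{X_0,Y_0}^{t}$ via the perturbation estimate $\abs{\tr{P(X - X_0)}} \leq \norm{X - X_0} < \epsilon$ and then get the second inequality by swapping the roles of $(X,Y)$ and $(X_0,Y_0)$ with $t$ replaced by $t - \epsilon$. The only difference is that you explicitly justify the constant-$1$ trace estimate through the rank-two, trace-zero decomposition $X - X_0 = \norm{X - X_0}(A - B)$ with $\tr{PA}, \tr{PB} \in [0,1]$, a detail the paper asserts without proof (``Since $P$ is a projection\dots''), and your justification is the right one.
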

\begin{proof}
    Suppose $P \in \cl{S}_{X,Y}^{t + \epsilon}$. Then, without loss of generality, we may assume that
    \begin{equation*}
        \tr{PY} + t + \epsilon < \half < \tr{PX} - t - \epsilon.
    \end{equation*}
    
    Since $P$ is a projection we have $\abs{\tr{P(Y_0 - Y)}} \leq \norm{Y - Y_0} < \epsilon$, so
    \begin{equation*}
        \tr{PY_0} + t = \tr{PY} - \tr{P(Y - Y_0)} + t \leq \tr{PY} + t + \epsilon < \half
    \end{equation*}
    and also
    \begin{equation*}
        \tr{PX_0} - t = \tr{PX} - \tr{P(X - X_0)} - t \geq \tr{PX} - t - \epsilon > \half.
    \end{equation*}
    Thus $\cl{S}_{X,Y}^{t + \epsilon} \subset \cl{S}_{X_0,Y_0}^{t}$, and so $d_\cl{P}^{t + \epsilon}(X,Y) \leq d_\cl{P}^t(X_0, Y_0)$.

    The second inequality follows from above by swapping the roles of $X,Y$ with $X_0, Y_0$ and replacing $t$ with $t - \epsilon$.
\end{proof}

\subsubsection{Concentration of $t$-soft Hamming distance}
In this section, we state a basic concentration result for for the $t$-soft Hamming distance between two fixed vectors, and then extend it to a uniform result over an $\epsilon$-net. 

\begin{lem}\label{lem:tSoftPointwiseConc}
    Let $\cl{P} = \{P_j\}_{j = 1}^m$ be an independent sequence of uniformly distributed projections in $\Proj_\F(n,2n)$, $t \in \R$, $\delta > 0$, and $X,Y \in \Proj_\F(1, 2n)$ be fixed. Then
    \begin{equation}
        \Prob{\abs{d_{\cl{P}}^t(X, Y) - \EV{d_\cl{P}^t(X, Y)}} > \delta} \leq 2\Exp{-2\delta^2m}.
    \end{equation}
\end{lem}
\begin{proof}
    From the way that we defined the $t$-soft Hamming distance, $m\cdot d_\cl{P}^t(X, Y) \sim \text{Bin}(m, p)$ where $p = \EV{d_\cl{P}^t(X, Y)}$. The result then follows from a standard Chernoff bound for binomial random variables (see \cite{AlonSpencer:2016}).
\end{proof}

We can now use Proposition~\ref{lem:tSoftPointwiseConc} and the bounds on the size of $\epsilon$-nets of $\Proj_\F(1, 2n)$ from Lemma~\ref{lem:ProjNetSize} to take a union bound. The result is a bound for the probability that the $t$-soft Hamming distance is close to its expectation for all pairs of projections in an $\epsilon$-net simultaneously.

\begin{prop}\label{prop:tSoftNetConc}
    Let $\epsilon > 0$ and $\cl{N}_\epsilon$ be an $\epsilon$-net of $\Proj_\F(1, 2n)$ such that $\log\abs{\cl{N}_\epsilon} \leq 4\beta n\log(1 + 2\epsilon^{-1})$. Also, let $t \in \R$, $\delta > 0$, $m \geq \half\delta^{-2}\p{8\beta n\log(1 + 2\epsilon^{-1}) + D}$, and $\cl{P} = \{P_j\}_{j=1}^m$ be an independent sequence of uniformly distributed projections in $\Proj_\F(n, 2n)$. Then with probability at least $1 - \Exp{-D}$ we have
    \begin{equation}
        \abs{d_\cl{P}^t(X, Y) - \EV{d_\cl{P}^t(X, Y)}} \leq \delta
    \end{equation}
    for all $X,Y \in \cl{N}_\epsilon$.
\end{prop}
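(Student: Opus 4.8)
The plan is a standard $\epsilon$-net union bound resting entirely on the pointwise tail estimate of Lemma~\ref{lem:tSoftPointwiseConc}; the only real work is calibrating the constants so that the collective failure probability lands at $\Exp{-D}$. First I would apply Lemma~\ref{lem:tSoftPointwiseConc} to each individual pair $X, Y \in \cl{N}_\epsilon$: for every such fixed pair the event $\abs{d_\cl{P}^t(X,Y) - \EV{d_\cl{P}^t(X,Y)}} > \delta$ has probability at most $2\Exp{-2\delta^2 m}$. Because the $t$-soft Hamming distance is symmetric in its arguments, $d_\cl{P}^t(X,Y) = d_\cl{P}^t(Y,X)$ (immediate from the symmetric definition of $\cl{S}_{X,Y}^t$), it suffices to union bound over unordered pairs; moreover the diagonal pairs $X = Y$ are harmless, since $\cl{S}_{X,X}^t$ is empty whenever $t \ge 0$ and the soft distance then vanishes identically. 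The factor of $2$ from the two-sided tail is thereby absorbed by restricting to the fewer than $\abs{\cl{N}_\epsilon}^2$ off-diagonal unordered pairs.

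Next I would combine these estimates into a union bound, yielding a total failure probability of at most $\abs{\cl{N}_\epsilon}^2 \Exp{-2\delta^2 m} = \Exp{2\log\abs{\cl{N}_\epsilon} - 2\delta^2 m}$. Substituting the cardinality hypothesis in the form $2\log\abs{\cl{N}_\epsilon} \le 8\beta n\log(1 + 2\epsilon^{-1})$ together with the sample-size hypothesis, which rearranges to $2\delta^2 m \ge 8\beta n\log(1 + 2\epsilon^{-1}) + D$, the exponent is bounded above by $-D$. Hence the bad event has probability at most $\Exp{-D}$, and on its complement $\abs{d_\cl{P}^t(X,Y) - \EV{d_\cl{P}^t(X,Y)}} \le \delta$ holds simultaneously for all $X, Y \in \cl{N}_\epsilon$, which is the claim.

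I do not anticipate any genuine obstacle, as the argument is a textbook net-plus-union-bound once Lemma~\ref{lem:tSoftPointwiseConc} is available. The single point requiring attention is the constant bookkeeping: the prefactor in the hypothesis on $m$ is tuned so that the $8\beta n\log(1 + 2\epsilon^{-1})$ term exactly cancels the $2\log\abs{\cl{N}_\epsilon}$ produced by the $\abs{\cl{N}_\epsilon}^2$ pairs, leaving $D$ as the net exponential margin.
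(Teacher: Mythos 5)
Your proposal follows the paper's proof essentially verbatim: the pointwise Chernoff bound of Lemma~\ref{lem:tSoftPointwiseConc}, a union bound over the at most $\half\abs{\cl{N}_\epsilon}^2$ unordered pairs (with the two-sided factor of $2$ absorbed exactly as the paper does via $2\binom{\abs{\cl{N}_\epsilon}}{2} \leq \abs{\cl{N}_\epsilon}^2$), and the same arithmetic converting the hypotheses on $\abs{\cl{N}_\epsilon}$ and $m$ into the exponent $-D$. In substance it is correct and identical in approach.

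The one inaccurate step is your dismissal of the diagonal pairs. The emptiness of $\cl{S}^t_{X,X}$ holds only for $t \geq 0$, as you yourself note; for $t < 0$ one has $\cl{S}^t_{X,X} = \ps{P : \half + t \leq \tr{PX} < \half - t}$, a set of positive probability by Lemma~\ref{lem:DistributionOfMeas}, so $d^t_\cl{P}(X,X)$ is a nondegenerate binomial average whose concentration is part of the claim (the statement quantifies over all $X, Y \in \cl{N}_\epsilon$, including $X = Y$, and all $t \in \R$). This case is not vacuous downstream: Theorem~\ref{thm:UniformConcHammingDist} invokes the proposition with $t = -\epsilon < 0$, and there the net points $X_0, Y_0$ approximating an arbitrary pair may coincide. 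The repair is immediate---apply Lemma~\ref{lem:tSoftPointwiseConc} to the $\abs{\cl{N}_\epsilon}$ diagonal pairs as well---at the cost of replacing the failure bound $\abs{\cl{N}_\epsilon}^2\Exp{-2\delta^2 m}$ by $\p{\abs{\cl{N}_\epsilon}^2 + \abs{\cl{N}_\epsilon}}\Exp{-2\delta^2 m} \leq 2\Exp{-D}$, equivalently by adding $\log 2$ to $D$ in the hypothesis on $m$. To be fair, the paper's own proof is no more careful: it unions over only the $\binom{\abs{\cl{N}_\epsilon}}{2}$ distinct pairs yet asserts the conclusion for all of $\cl{N}_\epsilon \times \cl{N}_\epsilon$, so this is a shared, minor blemish rather than a defect unique to your write-up.
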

\begin{proof}
    By Proposition~\ref{lem:tSoftPointwiseConc} and taking a union bound over all $\binom{\abs{\cl{N}_\epsilon}}{2} \leq \half\abs{\cl{N}_\epsilon}^2$ pairs in $\cl{N}_\epsilon \times \cl{N}_\epsilon$, we have that
    \begin{equation*}
        \Prob{\abs{d_{\cl{P}}^t(X, Y) - \EV{d_{\cl{P}}^t(X, Y)}} \leq  \delta,\ \forall (X,Y) \in \cl{N}_\epsilon \times \cl{N}_\epsilon} \geq 1 - \abs{\cl{N}_{\epsilon}}^2\Exp{-2\delta^2m}.
    \end{equation*}
    Using our bound on the cardinality of $\abs{N_\epsilon}$ and our assumption about $m$ we have
    \begin{equation*}
        \abs{N_{\epsilon}}^2\Exp{-2\delta^2m} \leq \Exp{8\beta n\log(1 + 2\epsilon^{-1}) - 2\delta^2m} = \Exp{-D}.
    \end{equation*}
\end{proof}

The following proposition addresses how varying $t$ affects the expected difference of the $t$-soft Hamming distance from the measurement Hamming distance.

\begin{prop}\label{prop:tSoftExpDiff}
    Let $\cl{P} = \{P_j\}_{j = 1}^m$ be an independent sequence of uniformly distributed projections in $\Proj_\F(n, 2n)$, $t \in \R$, and $X, Y \in \Proj(1, 2n)$ be fixed. Then
    \begin{equation}
        \abs{\EV{d_\cl{P}^t(X, Y) - d_\cl{P}(X, Y)}} = \abs{\Prob{P_1 \in \cl{S}^t_{X,Y}} - \Prob{P_1 \in \cl{S}_{X,Y}}} \leq \frac{32\sqrt{2\beta n - 1}}{e\sqrt{2\pi}}\abs{t}.
    \end{equation}
\end{prop}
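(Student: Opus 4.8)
The plan is to dispose of the stated equality first and then reduce the inequality to a one-dimensional estimate on the Beta density from Lemma~\ref{lem:DistributionOfMeas}. Since the $P_j$ are i.i.d., linearity of expectation gives $\EV{d_\cl{P}^t(X,Y)} = \Prob{P_1 \in \cl{S}^t_{X,Y}}$ and $\EV{d_\cl{P}(X,Y)} = \Prob{P_1 \in \cl{S}_{X,Y}}$, which is the claimed equality. Moreover, because $\tr{P_1 X}$ and $\tr{P_1 Y}$ each follow a $\BetaDist(\beta n, \beta n)$ law with a continuous density, the events $\ps{\tr{P_1 X} = \half}$ and $\ps{\tr{P_1 Y} = \half}$ have probability zero; hence $\cl{S}_{X,Y}$ and $\cl{S}^0_{X,Y}$ differ only on a null set and $\Prob{P_1 \in \cl{S}_{X,Y}} = \Prob{P_1 \in \cl{S}^0_{X,Y}}$. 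It therefore suffices to bound $\abs{\Prob{P_1 \in \cl{S}^t_{X,Y}} - \Prob{P_1 \in \cl{S}^0_{X,Y}}}$.

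The key geometric step is to show that the symmetric difference of these two sets is captured by the traces being close to $\half$: I would prove
\[
\cl{S}^t_{X,Y} \,\triangle\, \cl{S}^0_{X,Y} \subseteq \ps{P : \abs{\tr{PX} - \half} \leq \abs{t}} \cup \ps{P : \abs{\tr{PY} - \half} \leq \abs{t}}.
\]
Assuming without loss of generality $t > 0$ (so that $\cl{S}^t_{X,Y} \subset \cl{S}^0_{X,Y}$ directly from the definitions), I take $P \in \cl{S}^0_{X,Y} \setminus \cl{S}^t_{X,Y}$, say with $\tr{PX} < \half \leq \tr{PY}$. Failing to lie in $\cl{S}^t_{X,Y}$ forces $\tr{PX} \geq \half - t$ or $\tr{PY} < \half + t$, i.e. one of the two traces lands in the strip of half-width $t$ about $\half$; the opposite branch and the sign $t < 0$ are handled symmetrically.

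A union bound then gives $\abs{\Prob{P_1 \in \cl{S}^t_{X,Y}} - \Prob{P_1 \in \cl{S}^0_{X,Y}}} \leq \Prob{\abs{\tr{P_1 X} - \half} \leq \abs{t}} + \Prob{\abs{\tr{P_1 Y} - \half} \leq \abs{t}}$, and each summand is the integral of the Beta density over an interval of length $2\abs{t}$ centered at $\half$. Since that density is maximized at $\half$, where it equals $4^{-(\beta n - 1)}B(\beta n, \beta n)^{-1}$, each term is at most $2\abs{t}\cdot 4^{-(\beta n - 1)}B(\beta n, \beta n)^{-1} = 8\abs{t}\,4^{-\beta n}B(\beta n, \beta n)^{-1}$. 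Inserting the lower bound on $B(\beta n, \beta n)$ from~(\ref{eq:BetaBound}) bounds $4^{-\beta n}B(\beta n, \beta n)^{-1}$ by $2\sqrt{2\beta n - 1}/(e\sqrt{2\pi})$, so summing the two terms yields the stated bound $\tfrac{32\sqrt{2\beta n - 1}}{e\sqrt{2\pi}}\abs{t}$.

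The only place demanding genuine care is the symmetric-difference containment: one must check all four combinations of branch (which of $X,Y$ carries the larger trace) and sign of $t$, and confirm that a projection lying in one branch of $\cl{S}^0_{X,Y}$ cannot slip into the \emph{opposite} branch of $\cl{S}^t_{X,Y}$, so that the ``near-$\half$'' strip always absorbs the discrepancy. Everything after that is the routine density estimate, where the peak-at-$\half$ property of the symmetric Beta law and the explicit Beta bound from Lemma~\ref{lem:SpecGapBound} do all the work.
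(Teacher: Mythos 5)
Your proposal is correct and follows essentially the same route as the paper's proof: both reduce the claim to bounding the probability of a symmetric difference of separation sets, contain that symmetric difference in the events $\abs{\tr{P_1X} - \half} \leq \abs{t}$ or $\abs{\tr{P_1Y} - \half} \leq \abs{t}$ by a sign-of-$t$ case analysis, and then estimate these strip probabilities via the peak value $4^{1-\beta n}B(\beta n, \beta n)^{-1}$ of the Beta density from Lemma~\ref{lem:DistributionOfMeas} together with the lower bound~(\ref{eq:BetaBound}), arriving at the same constant. Your extra detour identifying $\cl{S}_{X,Y}$ with $\cl{S}^0_{X,Y}$ up to a null set is harmless and merely substitutes for the paper's direct use of the inclusions $\cl{S}^{t}_{X,Y} \subset \cl{S}_{X,Y}$ for $t > 0$ and $\cl{S}_{X,Y} \subset \cl{S}^{t}_{X,Y}$ for $t < 0$.
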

\begin{proof}
    Because the $t$-soft and regular Hamming distances are linear combinations of indicator functions, and the fact that the $P_j$ are i.i.d., we have
    
    \begin{equation*}
        \abs{\EV{d_\cl{P}^t(X, Y) - d_\cl{P}(X, Y)}} = \abs{\EV{\ind_{\cl{S}^t_{X,Y}}(P_1) - \ind_{\cl{S}_{X,Y}}(P_1)}},
    \end{equation*}
    and by Jensen's inequality it follows that
    \begin{equation}\label{eq:ProbTSep1}
        \abs{\EV{\ind_{\cl{S}^t_{X,Y}}(P_1) - \ind_{\cl{S}_{X,Y}}(P_1)}} \leq \EV{\abs{\ind_{\cl{S}^t_{X,Y}}(P_1) - \ind_{\cl{S}_{X,Y}}(P_1)}} = \Prob{P_1 \in \cl{S}^t_{X,Y} \triangle \cl{S}_{X,Y}}.
    \end{equation}

    We break up this symmetric difference into two disjoint pieces
    \begin{equation*}
        \Prob{P_1 \in \cl{S}_{X, Y}^t \triangle \cl{S}_{X, Y}} = \Prob{P_1 \in \cl{S}_{X,Y}^t\setminus \cl{S}_{X,Y}} + \Prob{P_1 \in \cl{S}_{X,Y}\setminus \cl{S}_{X,Y}^t}
    \end{equation*}
    and look at two cases. First, if $t > 0$ then $\cl{S}_{X,Y}^t\setminus \cl{S}_{X,Y}$ is empty, and
    \begin{equation*}
        \cl{S}_{X,Y}\setminus \cl{S}_{X,Y}^t \subset \left\{\abs{\tr{P_1X} - \half} < t \right\} \bigcup \left\{\abs{\tr{P_1Y} - \half} < t \right\}.
    \end{equation*}
    Similarly, if $t < 0$ then $\cl{S}_{X,Y}\setminus \cl{S}_{X,Y}^t$ is empty and again
    \begin{equation*}
        \cl{S}_{X,Y}^t\setminus \cl{S}_{X,Y} \subset \left\{\abs{\tr{P_1X} - \half} < -t \right\} \bigcup \left\{\abs{\tr{P_1Y} - \half} < -t \right\},
    \end{equation*}
    Since $\tr{P_1X} \overset{(d)}{=} \tr{P_1Y}$, in both cases we have
    \begin{equation}\label{eq:ProbTSep2}
        \Prob{P_1 \in \cl{S}_{X, Y}^t \triangle \cl{S}_{X, Y}} \leq 2\Prob{\abs{\tr{P_1X} - \half} < \abs{t}}.
    \end{equation}
    
    By Lemma~\ref{lem:DistributionOfMeas} we know $\tr{P_1X} \sim \BetaDist(\beta n, \beta n)$, and so we can bound this probability using the the probability density function of the beta distribution. To begin with, we see
    \begin{align}\label{eq:BetaProbBound}
        \Prob{\abs{\tr{P_1X} - \half} < \abs{t}} &= \frac{2}{B(\beta n, \beta n)} \int_{\half}^{\half + \abs{t}}x^{\beta n - 1}(1 - x)^{\beta n - 1}\ dx\\
        % &= \frac{2}{B(\beta n, \beta n)} \int_0^{\abs{t}} \p{\half + x}^{\beta n - 1}\left(\half - x\right)^{\beta n - 1} dx\\
        &= \frac{2}{B(\beta n, \beta n)} \int_{0}^{\abs{t}} \p{\frac{1}{4} - x^2}^{\beta n - 1}\ dx \nonumber\\
        &\leq \frac{2}{B(\beta n, \beta n)}\int_{0}^{\abs{t}}4^{1 - \beta n}\ dx \nonumber\\
        % &= \frac{2\cdot \abs{t} \cdot 4^{1 - \beta n}}{B(\beta n, \beta n)} 
        &= \frac{8\abs{t}}{4^{\beta n}B(\beta n, \beta n)}. \nonumber
    \end{align}
    Using the lower bound for the Beta function in (\ref{eq:BetaBound}) then yields
    \begin{equation}\label{eq:ProbTSep3}
        \Prob{\abs{\tr{P_1X} - \half} < \abs{t}} \leq \frac{16\sqrt{2\beta n - 1}}{e\sqrt{2\pi}}\abs{t}.
    \end{equation}
    The result follows from combining equation~(\ref{eq:ProbTSep1}) with inequalities~(\ref{eq:ProbTSep2}) and (\ref{eq:ProbTSep3}).
\end{proof}

\subsubsection{Uniform concentration of Hamming distance}\label{Sec-UnifConcHammDist}

We now have all the tools we need to prove that with sufficiently many measurements the Hamming distance concentrates near its expected value for all pairs in $\Proj_\F(1, 2n)$.

\begin{thm}\label{thm:UniformConcHammingDist}
    Let $\delta > 0$, $m \geq 2\delta^{-2}\p{8\beta n\log\p{1 + 128\frac{\sqrt{2 \beta n - 1}}{2\sqrt{2\pi}}\delta^{-1}} + \log(2) + D}$, and $\cl{P} = \{P_j\}_{j=1}^m$ be a collection of independent uniformly distributed projections in $\Proj_\F(n, 2n)$. Then with probability at least $1 - \Exp{-D}$ we have
    \begin{equation}
        \abs{d_\cl{P}(X,Y) - \EV{d_\cl{P}(X,Y)}} < \delta
    \end{equation}
    for all $X,Y \in \Proj_\F(1, 2n)$.
\end{thm}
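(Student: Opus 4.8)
The plan is to combine the three main tools developed in this section --- the uniform concentration of the $t$-soft Hamming distance on a net (Proposition~\ref{prop:tSoftNetConc}), the operator-norm continuity of the $t$-soft Hamming distance (Proposition~\ref{prop:tSoftOpNormCont}), and the control on how the expected $t$-soft distance differs from the expected Hamming distance (Proposition~\ref{prop:tSoftExpDiff}) --- into a single chain of inequalities that upgrades pointwise-on-a-net concentration to a genuinely uniform statement over all of $\Proj_\F(1, 2n)$. The idea is that the measurement Hamming distance $d_\cl{P}(X,Y)$ is discontinuous and cannot be controlled directly by a naive net argument, but the $t$-soft version is continuous in the sense of Proposition~\ref{prop:tSoftOpNormCont}, so we sandwich $d_\cl{P}(X,Y)$ between $d_\cl{P}^{\pm t}$ evaluated at nearby net points and then bear the cost of softening through Proposition~\ref{prop:tSoftExpDiff}.

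First I would fix the accuracy $\delta$ and choose the softening parameter $t$ and the net scale $\epsilon$ so that the approximation errors they introduce each consume only a fixed fraction of $\delta$. Concretely, given an arbitrary pair $X, Y \in \Proj_\F(1, 2n)$, pick net points $X_0, Y_0 \in \cl{N}_\epsilon$ with $\norm{X - X_0} < \epsilon$ and $\norm{Y - Y_0} < \epsilon$. Taking $t = \epsilon$ in Proposition~\ref{prop:tSoftOpNormCont} gives, using $\cl{S}^{t_2} \subset \cl{S} \subset \cl{S}^{t_1}$ for $t_1 \le 0 \le t_2$, the sandwich
\begin{equation*}
    d_\cl{P}^{2\epsilon}(X_0, Y_0) \leq d_\cl{P}(X, Y) \leq d_\cl{P}^{-2\epsilon}(X_0, Y_0).
\end{equation*}
Next I would apply Proposition~\ref{prop:tSoftNetConc} (at parameters $\pm 2\epsilon$) to replace each soft distance at the net points by its expectation up to an additive error, and Proposition~\ref{prop:tSoftExpDiff} to replace each expected soft distance $\EV{d_\cl{P}^{\pm 2\epsilon}(X_0,Y_0)}$ by $\EV{d_\cl{P}(X_0, Y_0)}$ at the cost of a term proportional to $\sqrt{2\beta n - 1}\,\epsilon$. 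Finally, a further application of Proposition~\ref{prop:tSoftOpNormCont} at $t=0$ together with Proposition~\ref{prop:tSoftNetConc} lets me compare $\EV{d_\cl{P}(X_0, Y_0)}$ back to $\EV{d_\cl{P}(X,Y)}$, again up to $O(\sqrt{2\beta n - 1}\,\epsilon)$. Collecting all the error terms yields a bound of the form $C\sqrt{2\beta n - 1}\,\epsilon$ plus the net-concentration slack.

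The main bookkeeping obstacle is balancing the parameters so that the final bound is exactly $\delta$ with the stated sample size. The softening error from Proposition~\ref{prop:tSoftExpDiff} scales like $\sqrt{2\beta n - 1}\,\epsilon$, so to make it $O(\delta)$ one is forced to take $\epsilon$ of order $\delta / \sqrt{2\beta n - 1}$; this is precisely why $\epsilon^{-1}$ enters as $128 \frac{\sqrt{2\beta n - 1}}{2\sqrt{2\pi}}\delta^{-1}$ inside the logarithm of the sample-size hypothesis, and it explains the appearance of $\log(\delta^{-1} n)$ (rather than $\log n$) in the uniform bound. I would therefore set $\epsilon = \frac{e\sqrt{2\pi}}{128\sqrt{2\beta n - 1}}\delta$ (or a comparable constant multiple) so that each softening term is bounded by $\delta/8$, and choose the concentration tolerance in Proposition~\ref{prop:tSoftNetConc} to be $\delta/8$ as well; substituting this $\epsilon$ into the cardinality bound $\log\abs{\cl{N}_\epsilon} \leq 4\beta n \log(1 + 2\epsilon^{-1})$ reproduces the stated requirement on $m$, with the extra $\log(2) + D$ absorbing the two-sided union bound and the desired failure probability $\Exp{-D}$. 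The one genuinely delicate point is ensuring the sandwich and the expectation-comparison steps are applied at consistent soft parameters ($2\epsilon$ versus $\epsilon$) so that no error term is double counted and the constants line up; once the parameters are pinned down, each individual estimate is a direct invocation of the propositions already proved.
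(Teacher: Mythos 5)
You follow essentially the same route as the paper's proof: pass to an $\epsilon$-net with $\epsilon \sim \delta/\sqrt{2\beta n - 1}$ (which is indeed what forces the $\log(\delta^{-1}n)$ factor), sandwich the discontinuous $d_\cl{P}(X,Y)$ between soft distances at nearby net points via Proposition~\ref{prop:tSoftOpNormCont}, concentrate those soft distances on the net with Proposition~\ref{prop:tSoftNetConc} (two applications, whence the $\log 2$), and remove the softening with Proposition~\ref{prop:tSoftExpDiff}; your sandwich $d_\cl{P}^{2\epsilon}(X_0,Y_0) \le d_\cl{P}(X,Y) \le d_\cl{P}^{-2\epsilon}(X_0,Y_0)$ is correct and is, up to where the parameters are placed, the paper's. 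Two bookkeeping corrections are needed, though. First, your final step compares the two \emph{deterministic} quantities $\EV{d_\cl{P}(X_0,Y_0)}$ and $\EV{d_\cl{P}(X,Y)}$, so Proposition~\ref{prop:tSoftNetConc} is not the relevant tool there; what you need is Proposition~\ref{prop:tSoftOpNormCont} taken in expectation followed by Proposition~\ref{prop:tSoftExpDiff}, which does produce the $O(\sqrt{2\beta n - 1}\,\epsilon)$ cost you quote. Second, that third comparison step is avoidable, and avoiding it is what the stated sample size is calibrated to: the paper concentrates $d_\cl{P}^{\pm\epsilon}$ on the net at tolerance $\delta/2$, bounds $\EV{d_\cl{P}^{\pm\epsilon}(X_0,Y_0)}$ by $\EV{d_\cl{P}^{\pm 2\epsilon}(X,Y)}$ using the continuity proposition in expectation, and then de-softens once; since Proposition~\ref{prop:tSoftNetConc} at tolerance $\delta'$ requires $m \ge \half \delta'^{-2}(\cdots)$, tolerance $\delta/2$ yields exactly the prefactor $2\delta^{-2}$ in the hypothesis, whereas your tolerance $\delta/8$ would force a prefactor of $32\delta^{-2}$, and your halved $\epsilon$ also makes the net slightly finer than the constant $128\frac{\sqrt{2\beta n - 1}}{2\sqrt{2\pi}}$ inside the logarithm accounts for. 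Neither issue is structural: with the extra comparison step removed (or with a correspondingly larger constant in $m$), your chain of inequalities closes exactly as the paper's does.
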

\begin{proof}
    Let $\epsilon =\frac{e\sqrt{2\pi}}{64\sqrt{2\beta n - 1}}\delta$ and let $\cl{N}_\epsilon$ be an $\epsilon$-net of $\Proj_\F(1, 2n)$ with $\log\abs{\cl{N}_\epsilon} \leq 4\beta n\log(1 + 2\epsilon^{-1})$ as in Lemma~\ref{lem:ProjNetSize}. By our assumption on $m$, Proposition~\ref{prop:tSoftNetConc} says that
    \begin{equation*}
        \Prob{\abs{d_\cl{P}^{\epsilon}(X,Y) - \EV{d_\cl{P}^{\epsilon}(X,Y)}} > \frac{\delta}{2} \text{ for some } X,Y \in \cl{N}_\epsilon} \leq \Exp{-\log(2) - D}
    \end{equation*}
    and also
    \begin{equation*}
        \Prob{\abs{d_\cl{P}^{-\epsilon}(X,Y) - \EV{d_\cl{P}^{-\epsilon}(X,Y)}} > \frac{\delta}{2} \text{ for some } X,Y \in \cl{N}_\epsilon} \leq \Exp{-\log(2) - D},
    \end{equation*}
    and so with probability at least $1 - \Exp{-D}$ we have $\abs{d_\cl{P}^{\pm\epsilon}(X,Y) - \EV{d_\cl{P}^{\pm\epsilon}(X,Y)}} \leq \delta$ for all $X, Y \in \cl{N}_\epsilon$ (call this event $\cl{A})$.
    
    Suppose that $\cl{A}$ occurs. Consider an arbitrary pair $X, Y \in \Proj(1, 2n)$ and let $X_0, Y_0 \in \cl{N}_\epsilon$ such that $\norm{X - X_0} < \epsilon$ and $\norm{Y - Y_0} < \epsilon$. By Proposition~\ref{prop:tSoftOpNormCont} we know that $d_\cl{P}(X, Y) \leq d_\cl{P}^{\epsilon}(X_0,Y_0) \leq d_\cl{P}^{2\epsilon}(X,Y)$. These inequalities together with $\cl{A}$ holding imply
    \begin{equation}
        d_\cl{P}(X, Y) \leq d_\cl{P}^{\epsilon}(X_0, Y_0) \leq \EV{d_\cl{P}^{\epsilon}(X_0, Y_0)} + \frac{\delta}{2} \leq \EV{d_\cl{P}^{2\epsilon}(X, Y)} + \frac{\delta}{2}.
    \end{equation}
    By Proposition~\ref{prop:tSoftExpDiff} we have $\abs{\EV{d_\cl{P}^{2\epsilon}(X,Y)} - \EV{d_\cl{P}(X,Y)}} \leq \frac{32\sqrt{2\beta n - 1}}{e\sqrt{2\pi}}\abs{\epsilon} = \frac{\delta}{2}$, hence
    \begin{equation}
        d_\cl{P}(X,Y) \leq \EV{d_\cl{P}(X, Y)} + \delta.
    \end{equation}
    
    Similarly, using Proposition~\ref{prop:tSoftOpNormCont} again shows that $d_\cl{P}(X,Y) \geq d_\cl{P}^{-\epsilon}(X_0, Y_0) \geq d_\cl{P}^{-2\epsilon}(X,Y)$, and since $\cl{A}$ holds we have
    \begin{equation}
        d_\cl{P}(X, Y) \geq d_\cl{P}^{-\epsilon}(X_0, Y_0) \geq \EV{d_\cl{P}^{-\epsilon}(X_0, Y_0)} - \frac{\delta}{2} \geq \EV{d_\cl{P}^{-2\epsilon}(X, Y)} - \frac{\delta}{2}.
    \end{equation}
    Using Proposition \ref{prop:tSoftExpDiff} as above but for $t = -\epsilon$ yields
    \begin{equation}
        d_\cl{P}(X,Y) \geq \EV{d_\cl{P}(X, Y)} - \delta.
    \end{equation}
\end{proof}

%\subsubsection{Probability of Separation}\label{Separation}
We have just shown that when the measurement projections are chosen uniformly and independently, then $d_\cl{P}(X, Y)$ concentrates near $\EV{d_\cl{P}(X,Y)} = \Prob{P \in \cl{S}_{X,Y}}$ for all $X,Y \in \Proj_\F(1, 2n)$, where $P$ is a single uniformly distributed projection in $\Proj_\F(n, 2n)$. When $n = 1$, then $\Prob{P \in \cl{S}_{X,Y}} = \frac{2}{\pi}\theta \leq \sin(\theta) = \norm{X - Y}$, where $\theta$ is the principal angle between $\Ran(X)$ and $\Ran(Y)$. In the remainder of section, we show that this upper bound holds for arbitrary $n$, see Proposition~\ref{prop:ProbSepBound}. To achieve this, we need to investigate the joint distribution of $(\tr{PX}, \tr{PY})$.

%\subsection{Understanding the distribution of $(\tr{PX}, \tr{PY})$}

By rotational invariance of the distribution of $P$ we may assume that $\Ran(X)$ and $\Ran(Y)$ are in the two-dimensional subspace spanned by $e_1$ and $e_2$, the first two standard basis vectors. Viewed as matrices, this means that all entries of $X$ and $Y$ are zero outside of the top-left $2 \times 2$ submatrix. Furthermore, if $\tilde{P}, \tilde{X},$ and $ \tilde{Y}$ are the top-left $2 \times 2$ submatrices of their respective matrices then $(\tr{PX}, \tr{PY}) = (\tr{\tilde{P}\tilde{X}}, \tr{\tilde{P}\tilde{Y}})$. We study the joint distribution of $(\tr{PX}, \tr{PY})$ through the submatrix $\tilde{P}$ acting on $\F^2$.

Since $P$ is Hermitian, so is $\tilde{P}$. Thus we may write $\tilde{P} = \lambda_1 E_1 + \lambda_2 E_2$ where $\lambda_1 \geq \lambda_2$ are the eigenvalues of $\tilde{P}$ and $E_1 \perp E_2$ are the projections onto their corresponding eigenspaces. We write $\lambda(\tilde{P}) := (\lambda_1, \lambda_2)$, and $E(\tilde{P}) := (E_1, E_2)$. By the rotational invariance of $P$, $E_1$ is uniformly distributed in $\Proj_2(1, 2)$ and $E_2 = I - E_1$ since Hermitian matrices have mutually orthogonal eigenspaces. Note also that $\lambda(\tilde{P})$ and $E(\tilde{P})$ are independent of each other. The distribution of $\lambda(\tilde{P})$ is given in the following lemma.

\begin{lem}\label{lem:EigenDistNorm}
    Let $n \geq 2$ and $P \in \Proj_\F(n, 2n)$ be uniformly distributed. Then $\lambda(\tilde{P})$ has probability density function $p_n$ on $\cl{D} := \{(x, y) \in [0,1]^2 : y \leq x\}$ defined by
    \begin{equation}
        p_n(x, y) := M_n^{-1} (x - y)^{2\beta}\pb{x(1-x)y(1-y)}^{\beta(n-1) - 1},
    \end{equation}
    with the normalization constant
      \begin{equation}
        M_n = 
        \begin{cases}
            \frac{2}{n-1}B(n-1,n-1) & \text{ if } \F = \R \\
            & \\
            \frac{1}{8n - 4}B(n-1,n-1)^2 & \text{ if } \F = \C.
        \end{cases}
    \end{equation}
\end{lem}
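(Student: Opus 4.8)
The plan is to compute the joint density of the eigenvalues $\lambda(\tilde P) = (\lambda_1, \lambda_2)$ of the $2\times 2$ compression $\tilde P$ of a uniformly distributed $P \in \Proj_\F(n,2n)$. The natural route is to identify this compression with a classical random matrix ensemble. Recall that a uniform rank-$n$ projection on $\F^{2n}$ arises as $P = GG^*$ where the range of $G$ is a uniform $n$-dimensional subspace; equivalently, the compression $\tilde P$ is the top-left $2\times 2$ block of a Haar-random projection. I would first make this connection precise: the eigenvalues of the compression of a Haar-invariant projection onto a fixed $2$-dimensional coordinate subspace follow a \emph{Jacobi} (also called \emph{MANOVA}) ensemble. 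The two parameters governing the Jacobi ensemble are the dimension of the ambient complement and the rank of the projection, namely $n$ and $2n-n = n$ here, while the matrix size is $2$ and the Dyson index is determined by whether $\F = \R$ or $\F = \C$, recorded by $\beta$.

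First I would reduce to a concrete model. Write $\tilde P = A^*(AA^* + BB^*)^{-1}A$ restricted appropriately, or more directly use the Gaussian construction from Section~\ref{subsec:RandomProjections}: take $G$ an $n\times 2n$ matrix with i.i.d.\ standard $\F$-Gaussian entries, let $P$ project onto the row space of $G$, and observe that the top-left $2\times 2$ block of $P$ has the same distribution as the matrix $W(W+\tilde W)^{-1}$ evaluated on $\F^2$, where $W$ and $\tilde W$ are independent complex/real Wishart blocks of appropriate sizes. The eigenvalues $(x,y)$ of such a $2\times 2$ Jacobi matrix are supported on $\cl D = \{(x,y)\in[0,1]^2 : y\le x\}$, and their joint density has the standard MANOVA form proportional to $\abs{x-y}^{2\beta}\,[xy]^{a}\,[(1-x)(1-y)]^{b}$ for exponents $a,b$ determined by the parameters. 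The Vandermonde factor $\abs{x-y}^{2\beta}$ is forced by the eigenvalue repulsion of the $\beta$-ensemble in size $2$, so matching the claimed $(x-y)^{2\beta}$ is immediate; the work is in showing both the "inner" exponent on $[x(1-x)y(1-y)]$ equals $\beta(n-1)-1$ and that the overall normalization is $M_n$.

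The cleanest way to pin down the exponents is a direct probabilistic computation rather than quoting the ensemble. I would write the density of $\tilde P$ itself on the space of $2\times 2$ Hermitian matrices with spectrum in $[0,1]$ by integrating out the remaining degrees of freedom of $P$, using rotational invariance to reduce to the spectral variables. Concretely, I would parametrize $\tilde P$ by $(\lambda_1,\lambda_2)$ together with the eigenvector data $E(\tilde P)$, note from the discussion preceding the lemma that $E_1$ is uniform on $\Proj_\F(1,2)$ and independent of $\lambda(\tilde P)$, and change variables from matrix entries to $(\lambda_1,\lambda_2,E_1)$. This change of variables contributes exactly the Jacobian factor $\abs{\lambda_1-\lambda_2}^{2\beta}$ (the standard Weyl/Vandermonde factor in dimension $2$ with index $2\beta$). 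It then remains to compute the marginal density of the unordered pair, which one obtains by writing the density of the block of a uniform projection via the Gaussian/Wishart representation and evaluating the resulting matrix integral; this yields the factor $[x(1-x)y(1-y)]^{\beta(n-1)-1}$.

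The main obstacle will be the normalization constant $M_n$, and in particular getting the two field-dependent cases ($\F=\R$ versus $\F=\C$) with their different closed forms. The exponent $\beta(n-1)-1$ and the Vandermonde power $2\beta$ can be read off from general ensemble structure, but the explicit evaluation of $M_n$ requires computing a Selberg-type integral in dimension $2$,
\begin{equation*}
    M_n = \int_{\cl D} (x-y)^{2\beta}\,[x(1-x)y(1-y)]^{\beta(n-1)-1}\,dx\,dy,
\end{equation*}
and simplifying it into products of Beta functions. For $\F=\R$ ($\beta=\tfrac12$) the Vandermonde contributes a single power $\abs{x-y}$, giving a cleaner one-factor answer $\tfrac{2}{n-1}B(n-1,n-1)$, whereas for $\F=\C$ ($\beta=1$) the squared Vandermonde $\abs{x-y}^2$ must be expanded and integrated term by term, producing the $B(n-1,n-1)^2$ structure with the coefficient $\tfrac{1}{8n-4}$. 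I expect this Selberg integral evaluation — rather than the identification of the exponents — to be where the calculation really has to be carried out carefully, and I would handle the two values of $\beta$ separately to keep the bookkeeping manageable.
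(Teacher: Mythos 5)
Your proposal is sound and follows the same two-step skeleton as the paper's proof: identify the law of $\lambda(\tilde{P})$ with the size-two Jacobi/MANOVA ensemble with parameters $(n,n)$, then evaluate the resulting two-dimensional integral to obtain $M_n$. For the first step the paper does not re-derive anything: it cites \cite[Proposition 4.1.4]{AndersonGuionnetEtAl:2010} (with $p=2$, $q=2n-2$, $r=s=n-2$), which is precisely the rigorous form of your Wishart identification; your sketched derivation via the Weyl--Vandermonde Jacobian plus marginalization is exactly the content of that citation, so it adds work without adding generality. The genuine divergence is in computing $M_n$ when $\F=\R$. You would invoke the two-variable Selberg integral with Vandermonde power $1$ and both exponents equal to $\frac{n-1}{2}$; this route is valid and does yield $\frac{2}{n-1}B(n-1,n-1)$, but only after an application of the Legendre duplication formula to collapse the product of Gamma functions, and it leans on a heavier classical result. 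The paper instead uses an elementary device: it exhibits $M_n p_n$ as $\partial_x g_n - \partial_y f_n$ for explicit functions $f_n,g_n$ that vanish on $\partial\cl{D}$ except along the diagonal, and applies Green's theorem, which collapses the normalization to the one-dimensional integral $\frac{2}{n-1}\int_0^1 t^{n-2}(1-t)^{n-2}\,dt$. One inversion of expectations worth noting: you flag the complex case as the laborious one, but it is actually the easy case --- exactly as you propose, one expands $(x-y)^2$ and recognizes the integral as $B(n-1,n-1)^2\var(b)$ with $b\sim\BetaDist(n-1,n-1)$, a two-line computation; it is the real case, with the unsquared factor $\abs{x-y}$ breaking the product structure, that requires either Selberg or the paper's Green's theorem trick.
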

\begin{proof}
    The probability density functions are given by \cite[Proposition 4.1.4]{AndersonGuionnetEtAl:2010} with $p = 2$, $q = 2n-2$, $r = n-2$ and $s = n-2$. It only remains to compute the normalization constants $M_n$.

    Suppose $\F = \R$. Then $p_n(x,y) = M_n^{-1} (x - y)\pb{x(1-x)y(1-y)}^\frac{n-3}{2}$. Define the functions
    \begin{align}
        &f_n(x,y) = -\frac{1}{n-1}\pb{x(1-x)}^{\frac{n-3}{2}}\pb{y(1-y)}^{\frac{n-1}{2}}\\
        &g_n(x,y) = -\frac{1}{n-1}\pb{x(1-x)}^{\frac{n-1}{2}}\pb{y(1-y)}^{\frac{n-3}{2}}.
    \end{align}
    With these definitions, we have $p_n = M_n^{-1} (\frac{\partial g_n}{\partial x} - \frac{\partial f_n}{\partial y})$ on $\cl{D}$. So by Green's theorem,
    \begin{equation}
        1 = \iint_\cl{D} p_n(x, y)\ dxdy = M_n^{-1}\oint_{\partial \cl{D}} f_ndx + g_ndy,
    \end{equation}
    where $\partial \cl{D}$ is the boundary of $\cl{D}$. Note that $f_n$ and $g_n$ both vanish on the boundary of $\cl{D}$ except for the diagonal $\Delta := \{(x,y) \in \cl{D} : x = y\}$, so we only need to compute the line integral over $\Delta$. Parameterizing $\Delta$ by $x(t) = y(t) = 1-t$ for $t \in [0,1]$, we see
    \begin{align}
        M_n= \oint_{\partial \cl{D}} f_n dx + g_n dy &= -\int_0^1 \pb{f_n(x(t), y(t)) + g_n(x(t), y(t))} dt\\
        &= \frac{2}{n-1}\int_0^1t^{n-2}(1-t)^{n-2}dt \nonumber\\
        &= \frac{2}{n-1}B(n-1, n-1). \nonumber
    \end{align}
    
    Next, we consider the case when $\F = \C$. Then $p_n(x,y) = M_n^{-1} (x - y)^2\pb{x(1-x)y(1-y)}^{n-2}$. By symmetry, 
    $1 = \half \iint_{[0,1]^2} p_n(x,y) dxdy$, so by expanding this integral and facts about the Beta distribution, we see
    \begin{equation}
       1= \half \iint_{[0,1]^2} p_n(x,y) dxdy = M^{-1}_n \var(b)\cdot B(n-1, n-1)^2,
    \end{equation}
    where $b \sim \BetaDist(n-1, n-1)$. This beta-distributed random variable has variance $\var(b) = \frac{1}{4(2n-1)}$, which determines $M_n$.
\end{proof}

Let $\cl{D}_{\text{Sep}} := \{(x, y) \in \cl{D} : y < \half < x\}$. Then $\lambda(\tilde{P}) \in \cl{D}_{\text{Sep}}$ if and only if there exist projections $A, B \in \Proj_\F(1, 2)$ such that $\tilde{P} \in \cl{S}_{A, B}$. This is true because $\lambda_1 = \max_{A' \in \Proj(1, 2)}\tr{PA'}$ and $\lambda_2 = \max_{B' \in \Proj(1, 2n)}\tr{PB'}$. In particular, $P \in \cl{S}_{X,Y}$ requires $\lambda(\tilde{P}) \in \cl{D}_{\text{Sep}}$. For this reason, we compute the probability that $\lambda(\tilde{P}) \in \cl{D}_{\text{Sep}}$.

\begin{lem}\label{lem:ProbSep}
    Let $n \geq 2$, and $P \in \Proj_\F(n, 2n)$ be uniformly distributed, then
    \begin{equation}
        \Prob{\lambda(\tilde{P}) \in \cl{D}_{\text{Sep}}} = 
        \begin{cases}
            \frac{B\p{\frac{n-1}{2}, \frac{n-1}{2}}}{2^nB(n-1, n-1)} \to \frac{1}{\sqrt{2}} & \text{ if } \F = \R \\
            & \\
            \frac{1}{2} + \frac{8n - 4}{(n-1)^2 2^{4n - 3}B(n-1, n-1)^2} \to \half + \frac{1}{\pi} & \text{ if } \F = \C.
        \end{cases}
    \end{equation}
    
\end{lem}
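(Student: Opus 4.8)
The plan is to compute $\Prob{\lambda(\tilde{P}) \in \cl{D}_{\text{Sep}}}$ by directly integrating the density $p_n$ supplied by Lemma~\ref{lem:EigenDistNorm} over $\cl{D}_{\text{Sep}}$. First I would observe that the defining constraints $y < \tfrac12 < x$ automatically force $y < x$, so that $\cl{D}_{\text{Sep}}$ is simply the rectangle $R = [\tfrac12, 1]\times[0,\tfrac12]$, and the probability equals $M_n^{-1}\iint_R (x-y)^{2\beta}[x(1-x)y(1-y)]^{\beta(n-1)-1}\,dx\,dy$. I would then split into the cases $\F = \R$ and $\F = \C$, since the exponent $2\beta$ on the factor $(x-y)$ behaves quite differently.

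For the real case ($\beta = \tfrac12$) I would reuse the potential functions $f_n, g_n$ introduced in the proof of Lemma~\ref{lem:EigenDistNorm}, which satisfy $p_n = M_n^{-1}(\partial_x g_n - \partial_y f_n)$. Applying Green's theorem over $R$ converts the area integral into the boundary integral $M_n^{-1}\oint_{\partial R} f_n\,dx + g_n\,dy$. The key simplification is that $f_n$ and $g_n$ each carry a factor $[y(1-y)]^{(n-1)/2}$, respectively $[x(1-x)]^{(n-1)/2}$, which vanishes on the outer edges $y = 0$ and $x = 1$; hence only the two inner edges $x = \tfrac12$ and $y = \tfrac12$ contribute. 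On each of these the integrand collapses to a constant multiple of $[t(1-t)]^{(n-3)/2}$, and by symmetry about $t = \tfrac12$ each half-interval integral equals $\tfrac12 B(\tfrac{n-1}{2}, \tfrac{n-1}{2})$. Collecting the prefactors and inserting $M_n = \tfrac{2}{n-1}B(n-1,n-1)$ yields $B(\tfrac{n-1}{2}, \tfrac{n-1}{2})/(2^n B(n-1,n-1))$.

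For the complex case ($\beta = 1$) the weight is $(x-y)^2$, and Green's theorem no longer telescopes cleanly, so instead I would center coordinates at $\tfrac12$ via $u = x - \tfrac12$, $v = y - \tfrac12$, so that $x(1-x) = \tfrac14 - u^2$, $y(1-y) = \tfrac14 - v^2$, and $R$ becomes $(0,\tfrac12]\times[-\tfrac12,0)$. Writing $h(s) = (\tfrac14 - s^2)^{n-2}$ and expanding $(u-v)^2 = u^2 - 2uv + v^2$, the evenness of $h$ lets me factor the double integral into the one-dimensional moments $I_k = \int_0^{1/2} s^k h(s)\,ds$, giving $\iint_R (u-v)^2 h(u)h(v)\,du\,dv = 2I_0 I_2 + 2I_1^2$. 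Each moment is elementary: $I_0 = \tfrac12 B(n-1,n-1)$; $I_1 = \tfrac{1}{(n-1)2^{2n-1}}$ by the substitution $w = \tfrac14 - u^2$; and $I_2$ follows from $\tfrac14 - x(1-x) = (x-\tfrac12)^2$ together with the identity $B(n,n) = \tfrac{n-1}{2(2n-1)}B(n-1,n-1)$. Combining these and multiplying by $M_n^{-1} = \tfrac{4(2n-1)}{B(n-1,n-1)^2}$ produces $\tfrac12 + \tfrac{8n-4}{(n-1)^2 2^{4n-3} B(n-1,n-1)^2}$, where the leading $\tfrac12$ arises because the two remaining terms in $2I_0 I_2$ recombine into $\tfrac{B(n-1,n-1)^2}{8(2n-1)}$.

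Finally, I would establish the two limits by inserting the large-argument asymptotic $B(a,a) \sim \sqrt{\pi/a}\,2^{1-2a}$, which follows from the duplication formula and Stirling in the same spirit as the bounds~(\ref{eq:BetaBound}). This gives the real-case ratio $\to 1/\sqrt{2}$ and the complex-case correction term $\to 1/\pi$. The main obstacle is the complex case: the quadratic weight $(x-y)^2$ obstructs the clean Green's-theorem telescoping available for $\F = \R$, so the genuine work lies in the symmetry reduction to the moments $I_0, I_1, I_2$ and in re-expressing $B(n,n)$ through $B(n-1,n-1)$ to obtain the exact value $\tfrac12$ plus the stated correction.
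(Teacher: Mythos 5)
Your proposal is correct --- both exact formulas and both limits come out right --- but it reaches them by a partly different route than the paper. The paper treats both fields with a single probabilistic template: Fubini on the rectangle $[\half,1]\times[0,\half]$, with the resulting one-dimensional integrals packaged as conditional moments of a Beta variable; concretely, the real-case integral is written as $\frac14\pb{\EV{b\ \given\ b\geq\half}-\EV{b\ \given\ b\leq\half}}B\p{\frac{n-1}{2},\frac{n-1}{2}}^2$ with $b\sim\BetaDist\p{\frac{n-1}{2},\frac{n-1}{2}}$, and the complex-case integral as $\half\pb{\EV{b^2}-\EV{b\ \given\ b\geq\half}\cdot\EV{b\ \given\ b\leq\half}}B(n-1,n-1)^2$ with $b\sim\BetaDist(n-1,n-1)$, followed by explicit evaluation of the conditional expectations. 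Your real case is genuinely different: you recycle the potentials $f_n,g_n$ --- and indeed $\partial_x g_n-\partial_y f_n=(x-y)\pb{x(1-x)y(1-y)}^{(n-3)/2}$ holds on all of $(0,1)^2$, not just on $\cl{D}$, so the reuse is legitimate --- and Green's theorem on the rectangle kills the outer edges, leaving the edges $x=\half$ and $y=\half$, each contributing $\frac{2^{-n}}{n-1}B\p{\frac{n-1}{2},\frac{n-1}{2}}$ to the boundary integral; dividing by $M_n$ gives the stated value. This buys an economical reuse of the machinery already built for the normalization constant and avoids conditional expectations altogether. Your complex case, by contrast, is essentially the paper's computation in different coordinates: centering at $\half$ and splitting $(u-v)^2$ by parity is the same expand-and-Fubini step, and $2I_0I_2+2I_1^2$ equals the paper's bracket times $\half B(n-1,n-1)^2$, with $I_1$ and $I_2$ carrying exactly the information of the paper's conditional means and $\EV{b^2}$; your parity bookkeeping does make the origin of the leading $\half$ more transparent. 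The limits are obtained the same way in both proofs --- your asymptotic $B(a,a)\sim\sqrt{\pi/a}\,2^{1-2a}$ is Stirling as in~(\ref{eq:BetaBound}). One small caveat worth adding if you write this up: for $n=2$ the exponent $\frac{n-3}{2}=-\half$ makes the integrand and the potentials singular along $x=1$ and $y=0$, so Green's theorem should formally be applied on a slightly shrunken rectangle and passed to the limit --- the same caveat that is implicit in the paper's own proof of Lemma~\ref{lem:EigenDistNorm}.
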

\begin{proof}
    First, suppose $\F = \R$, so $p_n(x, y) = M_n^{-1} (x - y)\pb{x(1-x)y(1-y)}^\frac{n-3}{2}$. Then,
    \begin{equation}
        \Prob{\lambda(\tilde{P}) \in \cl{D}_{\text{Sep}}} = M_n^{-1} \int_0^\half \int_\half^1 (x - y)\pb{x(1-x)y(1-y)}^\frac{n-3}{2}dxdy.
    \end{equation}
    By linearity and Fubini's theorem, we get
    \begin{equation*}
        \int_0^\half \int_\half^1 (x - y)\pb{x(1-x)y(1-y)}^\frac{n-3}{2}dxdy = \frac{1}{4}\pb{\EV{b\ \given\ b \geq \half} - \EV{b\ \given\ b \leq \half}}B\p{\frac{n-1}{2}, \frac{n-1}{2}}^2,
    \end{equation*}
    where $b \sim \BetaDist\p{\frac{n-1}{2}, \frac{n-1}{2}}$. Calculating these conditional expectations we get
    \begin{equation*}
        \EV{b\ \given\ b \geq \half} - \EV{b\ \given\ b \leq \half} = \frac{1}{(n-1)2^{n-3}B\p{\frac{n-1}{2}, \frac{n-1}{2}}},
    \end{equation*}
    and combining this with Lemma~\ref{lem:EigenDistNorm} yields
    \begin{equation}
        \Prob{\lambda(\tilde{P}) \in \cl{D}_{\text{Sep}}} = \frac{B\p{\frac{n-1}{2}, \frac{n-1}{2}}}{2^nB(n-1, n-1)}.
    \end{equation}
    
    Next, suppose $\F = \C$, so $p_n(x,y) = M_n^{-1} (x - y)^2\pb{x(1-x)y(1-y)}^{n-2}$. Then,
    \begin{equation}
        \Prob{\lambda(\tilde{P}) \in \cl{D}_{\text{Sep}}} = M_n^{-1}\int_0^\half \int_\half^1 (x - y)^2\pb{x(1-x)y(1-y)}^{n-2}dxdy.
    \end{equation}
    Expanding $(x - y)^2$ and rewriting integrals in terms of expectations of beta-distributed random variables, we see
    \begin{equation}
        \int_0^\half \int_\half^1 (x - y)^2\pb{x(1-x)y(1-y)}^{n-2}dxdy = \frac{1}{2}\pb{\EV{b^2} - \EV{b\ \given\ b \geq \half} \cdot \EV{b\ \given\ b \leq \half}}B(n-1, n-1)^2,
    \end{equation}  
    where $b \sim \BetaDist(n-1, n-1)$. We know that $\EV{b^2} = \frac{n}{4n - 2} = \frac{1}{4} + \frac{1}{8n - 4}$, and also
    \begin{align*}
        \EV{b\ \given\ b \geq \half} \cdot \EV{b\ \given\ b \leq \half} &= \p{\half + \frac{1}{(n-1)2^{2n-2}B(n-1, n-1)}}\p{\half - \frac{1}{(n-1)2^{2n-2}B(n-1, n-1)}}\\
        &= \frac{1}{4} - \frac{1}{(n-1)^2 2^{4n - 4}B(n-1, n-1)^2}. \nonumber
    \end{align*}
    Putting this all together yields
    \begin{equation}
        \Prob{\lambda(\tilde{P}) \in \cl{D}_{\text{Sep}}} = \frac{1}{2} + \frac{8n - 4}{(n-1)^2 2^{4n - 3}B(n-1, n-1)^2}.
    \end{equation}
    
    The asymptotic limit of $\Prob{\lambda(\tilde{P})}$ as $n \to \infty$ follows from Stirling's approximation as in (\ref{eq:BetaBound}), see \cite{Robbins:1955}.
 \end{proof}

Now we are prepared to bound $\Prob{P \in \cl{S}_{X,Y}}$ in terms of the operator norm distance $\norm{X - Y}$.

\begin{prop}\label{prop:ProbSepBound}
    Let $P \in \Proj_\F(n, 2n)$ be uniformly distributed, then 
    \begin{equation}
        \Prob{P \in \cl{S}_{X,Y}} \leq \norm{X - Y}.
    \end{equation}
\end{prop}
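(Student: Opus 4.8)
The plan is to condition on the eigenvalues $\lambda(\tilde P) = (\lambda_1,\lambda_2)$ and thereby reduce the separation event to a question about the single uniform eigenprojection $E_1$. Since $\tr{PX} = \tr{\tilde P\tilde X} = \lambda_2 + (\lambda_1 - \lambda_2)\tr{E_1\tilde X}$ and likewise for $Y$, the measurement obeys $\tr{PX} > \half$ precisely when $\tr{E_1\tilde X} > c$, where $c := (\half - \lambda_2)(\lambda_1 - \lambda_2)^{-1}$ is a threshold determined entirely by $\lambda(\tilde P)$. Hence, conditionally on $\lambda(\tilde P)$, the projection $P$ separates $X$ and $Y$ exactly when $\tr{E_1\tilde X}$ and $\tr{E_1\tilde Y}$ lie on opposite sides of the common value $c$; when $\lambda(\tilde P) \notin \cl{D}_{\text{Sep}}$ one has $c \notin (0,1)$ and this conditional probability vanishes, matching the fact that separation is impossible there. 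Because $\lambda(\tilde P)$ and $E(\tilde P)$ are independent and $E_1$ is uniform in $\Proj_\F(1,2)$, the whole problem collapses to bounding, for each fixed $c$, the probability that a uniform $E_1$ separates $X$ and $Y$ at level $c$.

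The crux is the following pointwise (in $c$) estimate: writing $\theta \in [0,\pi/2]$ for the principal angle between $\Ran(X)$ and $\Ran(Y)$, so that $\norm{X - Y} = \sin\theta$, I claim
\[
    \Prob{\sign(\tr{E_1\tilde X} - c) \neq \sign(\tr{E_1\tilde Y} - c)} \leq \frac{2}{\pi}\theta \qquad \text{for every } c.
\]
I would prove this by recognizing the left-hand side as the normalized measure of the symmetric difference of the two congruent superlevel sets $\{E_1 : \tr{E_1\tilde X} > c\}$ and $\{E_1 : \tr{E_1\tilde Y} > c\}$ inside $\Proj_\F(1,2)$. When $\F = \R$ these sets are two arcs of equal half-width $w = \arccos\sqrt{c}$ in $\mathbb{RP}^1 \cong [0,\pi)$, with centers at geodesic distance $\theta$; a short case analysis in $w$ shows their symmetric difference has length at most $2\theta$, giving probability at most $2\theta/\pi$. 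When $\F = \C$ the sets are two spherical caps on the Bloch sphere $\mathbb{CP}^1 \cong S^2$ whose centers subtend the angle $2\theta$; the extremal case is $c = \half$, where the caps are hemispheres and their symmetric difference is a pair of antipodal lunes of dihedral angle $2\theta$, again of normalized area $2\theta/\pi$.

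Granting this estimate, the proposition follows immediately by averaging over the eigenvalues:
\[
    \Prob{P \in \cl{S}_{X,Y}} = \EV[\lambda(\tilde P)]{\Prob{P \in \cl{S}_{X,Y} \given \lambda(\tilde P)}} \leq \frac{2}{\pi}\theta \leq \sin\theta = \norm{X - Y},
\]
where the last inequality expresses that the chord $\theta \mapsto \frac{2}{\pi}\theta$ lies below the concave graph of $\sin$ on $[0,\pi/2]$, the two functions agreeing at the endpoints $0$ and $\pi/2$.

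I expect the genuine obstacle to be the complex instance of the pointwise estimate --- specifically, verifying that among all common thresholds $c$ the symmetric difference of the two caps is maximized when they are hemispheres. The real case is elementary arc bookkeeping, but the spherical statement requires either an explicit evaluation of the overlap area of two equal caps or a monotonicity argument in the cap radius $\arccos(2c-1)$. If a clean pointwise bound for every $c$ turns out to be delicate, a safe fallback is to integrate the conditional separation probability against the explicit density of $\lambda(\tilde P)$ from Lemma~\ref{lem:EigenDistNorm} over $\cl{D}_{\text{Sep}}$, with Lemma~\ref{lem:ProbSep} controlling the total mass; the pointwise route, however, bypasses the eigenvalue density entirely.
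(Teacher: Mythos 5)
Your proposal follows essentially the same route as the paper's proof: condition on $\lambda(\tilde P)$, note that separation forces $\lambda(\tilde P) \in \cl{D}_{\text{Sep}}$, reduce the conditional event to the geometry of $\Proj_\F(1,2)$ (arcs in $\mathbb{RP}^1$ for $\F = \R$, caps on the Bloch sphere for $\F = \C$) using the uniformity of $E_1$, and then average; even your closing inequality $\frac{2}{\pi}\theta \le \sin\theta = \norm{X-Y}$ is how the paper concludes the real case. The real case of your argument is fine: the arc bookkeeping is elementary and correct. The gap is exactly the one you flag yourself: in the complex case you assert, without proof, that among all thresholds $c$ the normalized area of the symmetric difference of the two equal caps is maximized when they are hemispheres ($c = \half$). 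The assertion is in fact true --- for caps of common angular radius $w \le \pi/2$ whose centers subtend the angle $2\theta$, the symmetric-difference area is nondecreasing in $w$, since its derivative equals $4\sin w\,\pb{\pi - 2\arccos(\tan\theta\cot w)} \ge 0$ (the argument of $\arccos$ is nonnegative, so the $\arccos$ is at most $\pi/2$), and radii $w$ and $\pi - w$ give equal areas by passing to complements --- but as submitted your proof is open at precisely its crux. Your fallback does not repair it either: integrating against the density of Lemma~\ref{lem:EigenDistNorm} still requires a pointwise-in-$\lambda$, $\theta$-dependent bound on the conditional separation probability, and the total mass $\Prob{\lambda(\tilde P) \in \cl{D}_{\text{Sep}}}$ from Lemma~\ref{lem:ProbSep} is of constant order (it tends to $\half + \frac{1}{\pi}$), hence useless on its own when $\theta$ is small.

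The paper's proof shows the extremality question can be bypassed entirely, because only the weaker bound $\sin\theta$ is needed rather than the sharp $\frac{2}{\pi}\theta$. With $w = \min\{\phi_h, \pi - \phi_h\} \le \pi/2$: if $w \le \theta$, the two caps of angle $w$ centered at the Bloch points of $X$ and $Y$ are disjoint, so the conditional probability is $1 - \cos w \le 1 - \cos\theta \le \sin\theta$; if $w > \theta$, their intersection contains the cap of angle $w - \theta$ centered at the geodesic midpoint of $X$ and $Y$, so the symmetric difference has normalized area at most
\begin{equation*}
\cos(w - \theta) - \cos(w) = 2\sin\p{\tfrac{\theta}{2}}\sin\p{w - \tfrac{\theta}{2}} \le 2\sin\p{\tfrac{\theta}{2}}\cos\p{\tfrac{\theta}{2}} = \sin\theta,
\end{equation*}
where the middle inequality uses $w \le \pi/2$. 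Averaging this pointwise bound over $\lambda(\tilde P)$ and discarding the factor $\Prob{\lambda(\tilde P) \in \cl{D}_{\text{Sep}}} \le 1$ finishes the proof. So either supply the monotonicity argument sketched above to keep your sharper bound, or replace the hemisphere-extremality claim by this midpoint-cap containment; with either repair your proof closes.
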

\begin{proof}
    The case when $n = 1$ is simple and was mentioned previously, so we consider here $n \geq 2$. Further, without loss of generality, assume $\Ran(X), \Ran(Y) \subset \Ran(E)$ where $E$ is the orthogonal projection onto $\vspan\{e_1,e_2\}$. By conditioning, $\Prob{P \in \cl{S}_{X,Y}} = \EV{\Prob{P \in \cl{S}_{X,Y}\ \given\ \lambda(\tilde{P})}}$. By the definition of $\cl{D}_{\text{Sep}}$ we see that $\Prob{P \in \cl{S}_{X,Y}\ \given\ \lambda(\tilde{P})} = 0$ if $\lambda(\tilde{P}) \in \cl{D}_{\text{Sep}}^c$. Hence
    \begin{equation}
        \EV{\Prob{P \in \cl{S}_{X,Y}\ \given\ \lambda(\tilde{P})}} = \EV{\Prob{P \in \cl{S}_{X,Y}\ \given\ \lambda(\tilde{P})}\ind_{\cl{D}_{\text{Sep}}}(\lambda(\tilde{P}))}.
    \end{equation}
    
    Suppose now that $\lambda(\tilde{P}) \in \cl{D}_{\text{Sep}}$, and first consider the case when $\F = \R$. Then $\Proj_\R(1, 2)$ can be viewed as $\S_\R^1$ with its opposite points identified, and $E(\tilde{P})$ is a (uniformly distributed) random pair of antipodal points in this quotient space. Letting $E_1 = v_1v_1^*$ and $E_2 = v_2v_2^*$
    where $v_1$ and $v_2$ are normalized eigenvectors corresponding to eigenvalues $\lambda_1$ and $\lambda_2$ of $\tilde P$, we may parameterize $\Proj_\R(1, 2)$ by $\phi \in [-\frac{\pi}{2}, \frac{\pi}{2}]$ via $\phi \mapsto Z_\phi := (\cos(\phi)v_1 + \sin(\phi)v_2)(\cos(\phi)v_1 + \sin(\phi)v_2)^* = \cos^2(\phi)E_1 + \sin^2(\phi)E_2 + \sin(\phi)(\cos(\phi)(v_1v_2^* + v_2v_1^*)$. We see that $\tr{\tilde{P}Z_\phi} = \lambda_1\cos^2(\phi) + \lambda_2\sin^2(\phi) = \lambda_1 - (\lambda_1 - \lambda_2)\sin^2(\phi)$. Since $\tr{\tilde{P}Z_0} = \lambda_1 > \half$ and $\tr{\tilde{P}Z_{\frac{\pi}{2}}} = \lambda_2 < \half$, there exists some $\phi_h \in (0, \frac{\pi}{2})$ such that $\tr{\tilde{P}Z_{\phi_h}} = \tr{\tilde{P}Z_{-\phi_h}} = \half$. In fact, $\phi_h = \arcsin\p{\sqrt{\frac{\lambda_1 - \half}{\lambda_1 -\lambda_2}}}$. We see that $\tr{\tilde{P}Z_\phi} > \half$ for $\phi \in (-\phi_h, \phi_h)$, and $\tr{\tilde{P}Z_\phi} < \half$ for $\phi \in [-\frac{\pi}{2}, -\phi_h)\cup(\phi_h, \frac{\pi}{2}]$. %In our quotient space picture, $Z_{-\phi_h}$ is the reflection of the point $Z_{\phi_h}$ across the vertical line between $E_1$ and $E_2$.
    
    All of this goes to show that $\lambda(\tilde{P})$ determines $\phi_h$, which along with the orientation of $E_1$ determines which rank-1 projections in $\Ran(E)$ that $P$ separates. In the quotient space picture, the open arc between $E_{\phi_h}$ and $E_{-\phi_h}$ containing $E_1$ represents the rank-1 projections with measurements greater than $\half$, and the complementary arc represents those with measurements less than $\half$. Let $w = \min\{2\phi_h, \pi - 2\phi_h\}$, which is the length of the smallest of these two arcs. If $w \leq \theta$, then $\Prob{P \in \cl{S}_{X,Y}\ \given\ \lambda(\tilde{P})} = \frac{2w}{\pi} \leq \frac{2}{\pi}\theta$. If $w > \theta$, then $\Prob{P \in \cl{S}_{X,Y}\ \given\ \lambda(\tilde{P})} = \frac{2\theta}{\pi}$. So
    \begin{align}
        \EV{\Prob{P \in \cl{S}_{X,Y}\ \given\ \lambda(\tilde{P})}\ind_{\cl{D}_{\text{Sep}}}(\lambda(\tilde{P}))} &\leq \EV{\frac{2\theta}{\pi}\ind_{\cl{D}_{\text{Sep}}}(\lambda(\tilde{P}))}\\
        &= \frac{2\theta}{\pi}\Prob{\lambda(\tilde{P}) \in \cl{D}_{\text{Sep}}}\nonumber\\
        &\leq \norm{X - Y}. \nonumber
    \end{align}
    
    Next, we consider the case when $\F = \C$, in which case $\Proj_\C(1, 2)$ can be identified with the Bloch sphere \cite{Bloch:1946}. By rotational invariance, $E(\tilde{P})$ is a pair of (uniformly distributed) antipodal points on the sphere, and $\lambda(\tilde{P})$ determines which pairs of projections are separated by $P$. If $v_1$ and $v_2$ are eigenvectors of $\tilde P$
    as above, $E_1 = v_1v_1^* $ and $E_2 = v_2v_2^*$, and $v_{\phi, \psi} := \cos(\frac{\phi}{2})v_1 + e^{i\psi}\sin(\frac{\phi}{2})v_2$ for $\phi \in [0, \pi], \psi \in [0, 2\pi]$, then $Z_{\phi, \psi}=v_{\phi, \psi} v_{\phi, \psi}^*$ lies on the circle of points in the Bloch sphere at an angle of $\phi$ from $E_1$. Moreover, this representation shows that $\tr{\tilde{P}Z_{\phi, \psi_1}} = \tr{\tilde{P}Z_{\phi, \psi_2}}$ for all $\phi, \psi_1$, and $\psi_2$. By continuity, there must exist some $\phi_h \in [0, \pi]$ such that $\tr{\tilde{P}Z_{\phi_h, \psi}} = \half$ for all $\psi \in [0, 2\pi]$. In fact, we can calculate $\phi_h = 2\arcsin(\sqrt{\frac{\lambda_1 - \half}{\lambda_1 - \lambda_2}})$. The open spherical cap centered at $E_1$ of angle $\phi_h$ consists exactly of those projections $Z \in \Proj_\C(1, 2)$ such that $\tr{\tilde{P}Z} > \half$, and the complementary cap consists of those for which $\tr{\tilde{P}Z} < \half$.
    
    \begin{figure}[t]
        \centering
        \includegraphics[width=.45\textwidth]{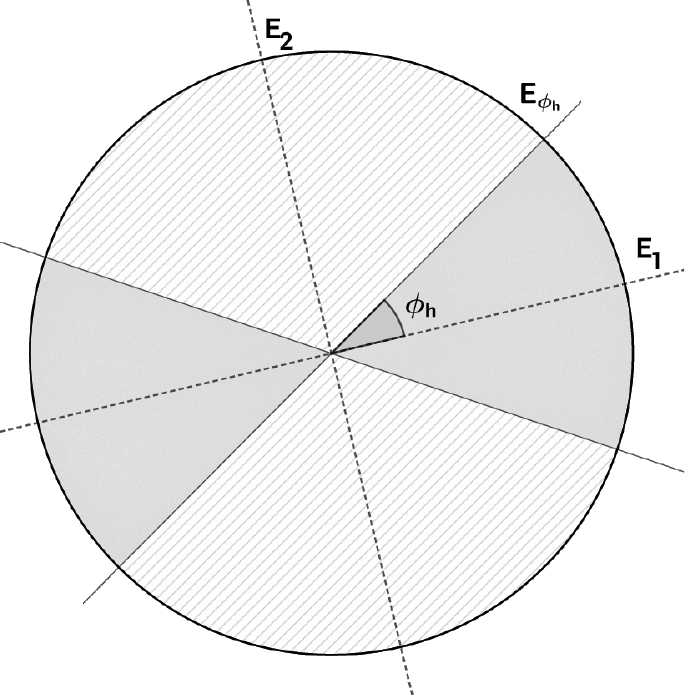} \includegraphics[width=.45\textwidth]{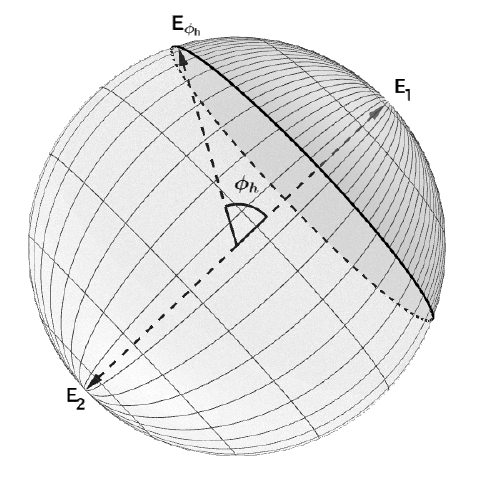}
        \caption{The $2\times 2$ principal submatrix $\tilde{P}$ of $P$ divides $\Proj_\F(1, 2)$ into two disjoint sets based on whether the Hilbert-Schmidt inner product of a rank-one orthogonal projection with $\tilde{P}$ is greater or less than $\half$ (Left: $\F = \R$; Right: $\F = \C$). If $P$ separates two points $X$ and $Y$, then $\tilde P =\lambda_1 E_1 + \lambda_2 E_2$ with
        eigenvalues $\lambda_1>1/2>\lambda_2$ and mutually orthogonal eigenprojectors $E_1$ and $E_2$. The subset shaded in darker gray contains the points for which the Hilbert-Schmidt inner product with $\tilde P$ is greater than 1/2.}
    \end{figure}
    
    Conditioning on $\lambda(\tilde{P})$ determines the opening angles of these two spherical caps, which are oriented along a random diameter determined by $E(\tilde{P})$. The projections $X, Y$ are two fixed points on the Bloch sphere at an angle of $2\theta$, and are separated if and only if they are not in the same cap. Let $w = \min\{\phi_h, \pi - \phi_h\}$, which is the smallest opening angle of these two caps. If $w \leq \theta$, then any cap of angle $w$ containing $X$ cannot contain $Y$ (and vice versa), so $\Prob{P \in \cl{S}_{X,Y}\ \given\ \lambda(\tilde{P})}$ is just twice the normalized area of a cap of angle $w$ (which is just its normalized height), i.e.
    \begin{equation}
        \Prob{P \in \cl{S}_{X,Y}\ \given\ \lambda(\tilde{P})} = 1 - \cos(w) \leq 1 - \cos(\theta) \leq \sin(\theta) = \norm{X - Y}.
    \end{equation}
    If $w > \theta$, then it is possible for both $X$ and $Y$ to be in a cap of opening angle $w$. In this case, $\Prob{P \in \cl{S}_{X,Y}\ \given\ \lambda(\tilde{P})}$ is just the normalized area of the symmetric difference of spherical caps of angle $w$ centered at $X$ and $Y$. The intersection of these two caps contains a spherical cap of angle $w - \theta$ centered at the geodesic midpoint of $X$ and $Y$, so for this case
    \begin{align}
        \Prob{P \in \cl{S}_{X,Y}\ \given\ \lambda(\tilde{P})} \leq \cos(w - \theta) - \cos(w) \leq \sin(\theta) = \norm{X - Y}.
    \end{align}
    where the last inequality follows since $w \leq \frac{\pi}{2}$. Thus we have
    \begin{align}
        \EV{\Prob{P \in \cl{S}_{X,Y}\ \given\ \lambda(\tilde{P})}\ind_{\cl{D}_{\text{Sep}}}(\lambda(\tilde{P}))} &\leq \norm{X - Y}\Prob{\lambda(\tilde{P}) \in \cl{D}_{\text{Sep}}}\\
        &\leq \norm{X - Y}. \nonumber
    \end{align}

\end{proof}

The uniform bound for the measurement Hamming distance in terms of the operator norm distance now follows directly by combining Theorem~\ref{thm:UniformConcHammingDist} with Proposition~\ref{prop:ProbSepBound}.

\begin{thm}\label{thm:UnifHammingOpIneq}
Let $\delta > 0$, $m \geq 2\delta^{-2}\p{8\beta n\log\p{1 + 128\frac{\sqrt{2 \beta n - 1}}{2\sqrt{2\pi}}\delta^{-1}} + \log(2) + D}$, and $\cl{P} = \{P_j\}_{j=1}^m$ be a collection of independent uniformly distributed projections in $\Proj_\F(n,2n)$. Then with probability at least $1 - \Exp{-D}$
\begin{equation}
    d_\cl{P}(X, Y) \leq \norm{X - Y} + \delta
\end{equation}
for all $X, Y \in \Proj_\F(1, 2n)$.
\end{thm}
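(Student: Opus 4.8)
The plan is to combine the two results established immediately beforehand: the uniform concentration of the measurement Hamming distance (Theorem~\ref{thm:UniformConcHammingDist}) and the pointwise bound on the separation probability (Proposition~\ref{prop:ProbSepBound}). Since the hypothesis on $m$ stated here is verbatim the hypothesis of Theorem~\ref{thm:UniformConcHammingDist}, I would first invoke that theorem directly: on an event of probability at least $1 - \Exp{-D}$, the estimate $\abs{d_\cl{P}(X,Y) - \EV{d_\cl{P}(X,Y)}} < \delta$ holds simultaneously for every pair $X, Y \in \Proj_\F(1, 2n)$. Only the upper half of this two-sided bound is needed, namely $d_\cl{P}(X,Y) < \EV{d_\cl{P}(X,Y)} + \delta$.

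The second step is to rewrite the expected Hamming distance as a separation probability. Recall from Section~\ref{subsec:SoftHammingArgument} that $d_\cl{P}(X,Y) = \frac{1}{m}\sum_{j=1}^m \ind_{\cl{S}_{X,Y}}(P_j)$; since the $P_j$ are independent and identically uniformly distributed, linearity of expectation gives $\EV{d_\cl{P}(X,Y)} = \EV{\ind_{\cl{S}_{X,Y}}(P_1)} = \Prob{P_1 \in \cl{S}_{X,Y}}$, a quantity independent of $m$. Finally I would apply Proposition~\ref{prop:ProbSepBound}, which furnishes $\Prob{P_1 \in \cl{S}_{X,Y}} \leq \norm{X - Y}$. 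Chaining the three facts yields, on the same event of probability at least $1 - \Exp{-D}$,
\[
    d_\cl{P}(X,Y) < \EV{d_\cl{P}(X,Y)} + \delta = \Prob{P_1 \in \cl{S}_{X,Y}} + \delta \leq \norm{X - Y} + \delta
\]
for all $X, Y \in \Proj_\F(1, 2n)$, as claimed.

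There is essentially no genuine obstacle at this final step: it is a direct composition of two prior statements, and the substance of the argument lives upstream. The real difficulties were resolved earlier — the discontinuity of the Hamming distance, which forced the introduction of the $t$-soft Hamming distance together with the operator-norm continuity estimate of Proposition~\ref{prop:tSoftOpNormCont} and the net argument of Theorem~\ref{thm:UniformConcHammingDist}, and the geometric antipodal-arc (real case) and Bloch-sphere (complex case) analysis behind Proposition~\ref{prop:ProbSepBound}. The only care required here is to apply the concentration estimate in the correct one-sided direction and to note that the separation probability $\Prob{P_1 \in \cl{S}_{X,Y}}$ does not depend on the number of measurements $m$.
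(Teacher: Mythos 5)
Your proof is correct and follows exactly the paper's own route: the paper proves this theorem in one line by combining Theorem~\ref{thm:UniformConcHammingDist} with Proposition~\ref{prop:ProbSepBound}, using precisely the identity $\EV{d_\cl{P}(X,Y)} = \Prob{P_1 \in \cl{S}_{X,Y}}$ that you spell out. Your write-up is in fact slightly more explicit than the paper's, which leaves the one-sided application of the concentration bound and the rewriting of the expectation as a separation probability implicit.
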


\subsection{Uniform guarantees for accurate recovery}
With the results from Sections \ref{subsec:ConcEmpAvgNet} and \ref{subsec:SoftHammingArgument} we are  ready to extend the pointwise result given in Theorem~\ref{thm:Pointwise} to a uniform result that controls the behavior of our recovery procedure for all input vectors simultaneously. 

\begin{thm}\label{thm:Uniform}
    Let $\delta > 0$ and set $\epsilon = \frac{(\mu_1 - \mu_2)}{8}\delta$. If 
    \begin{equation}\label{eq:mUniform}
        m \geq 2\epsilon^{-2}\p{8\beta n\log\p{1 + 128\frac{\sqrt{2 \beta n - 1}}{2\sqrt{2\pi}}\epsilon^{-1}} + 2\log(2) + D}
    \end{equation}
    and $\cl{P} = \{P_j\}_{j=1}^m$ is an independent sequence of uniformly distributed projections in $\Proj_\F(n, 2n)$, then with probability at least $1 - \Exp{-D}$
    \begin{equation}
        \norm{\hat{X} - X} < \delta
    \end{equation}
    for all $X \in \Proj(1, 2n)$, where $\hat{X}$ is the solution to (PEP) with input $\Phi_\cl{P}(X)$.
\end{thm}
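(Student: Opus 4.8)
The plan is to reduce the uniform recovery guarantee to a single uniform concentration statement for the empirical average of flipped projections, and then feed it into the deterministic estimate already proved pointwise. Recall from inequality~(\ref{eq:ErrorOpNormIneq}) in the proof of Theorem~\ref{thm:Pointwise} that, for \emph{every} input $X$ and every realization of $\cl{P}$,
\begin{equation*}
    \norm{\hat{X} - X} \leq 2(\mu_1 - \mu_2)^{-1}\norm{\hat{Q}_\cl{P}(X) - Q(X)}.
\end{equation*}
This bound is purely deterministic once the measurement is fixed, so with $\epsilon = \frac{(\mu_1-\mu_2)}{8}\delta$ it suffices to show that the stated $m$ forces $\norm{\hat{Q}_\cl{P}(X) - Q(X)} < \half(\mu_1 - \mu_2)\delta = 4\epsilon$ \emph{simultaneously} for all $X \in \Proj_\F(1, 2n)$, on an event of probability at least $1 - \Exp{-D}$.

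The crucial new ingredient is a Lipschitz-type estimate linking the discontinuous map $X \mapsto \hat{Q}_\cl{P}(X)$ to the measurement Hamming distance. For a fixed collection $\cl{P}$ and any $X, Y$, the summands $\hat{P}_j(X)$ and $\hat{P}_j(Y)$ agree unless $P_j$ separates $X$ and $Y$, in which case $\hat{P}_j(X) - \hat{P}_j(Y) = \pm(2P_j - I)$, an operator of norm one. Summing and applying the triangle inequality yields
\begin{equation*}
    \norm{\hat{Q}_\cl{P}(X) - \hat{Q}_\cl{P}(Y)} \leq \frac{1}{m}\abs{\{j : P_j \in \cl{S}_{X,Y}\}} = d_\cl{P}(X, Y).
\end{equation*}
Combined with Theorem~\ref{thm:UnifHammingOpIneq} at accuracy $\epsilon$, which gives $d_\cl{P}(X,Y) \leq \norm{X - Y} + \epsilon$ for all pairs on a high-probability event, this makes $X \mapsto \hat{Q}_\cl{P}(X)$ effectively Lipschitz in operator norm and sidesteps the discontinuity that defeats a naive $\epsilon$-net argument.

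With this in hand I would fix an $\epsilon$-net $\cl{N}_\epsilon$ of $\Proj_\F(1, 2n)$ as in Lemma~\ref{lem:ProjNetSize}, apply Lemma~\ref{lem:EmpAvgConcNet} at accuracy $\epsilon$ to control $\norm{\hat{Q}_\cl{P}(X_0) - Q(X_0)}$ on the net, and intersect this with the event from Theorem~\ref{thm:UnifHammingOpIneq}, allocating failure budget $D + \log 2$ to each so the union still fails with probability at most $\Exp{-D}$; this is exactly what the term $2\log(2)$ in~(\ref{eq:mUniform}) accounts for. For an arbitrary $X$, choose $X_0 \in \cl{N}_\epsilon$ with $\norm{X - X_0} < \epsilon$ and split
\begin{equation*}
    \norm{\hat{Q}_\cl{P}(X) - Q(X)} \leq \norm{\hat{Q}_\cl{P}(X) - \hat{Q}_\cl{P}(X_0)} + \norm{\hat{Q}_\cl{P}(X_0) - Q(X_0)} + \norm{Q(X_0) - Q(X)}.
\end{equation*}
The first term is at most $d_\cl{P}(X, X_0) \leq \norm{X - X_0} + \epsilon < 2\epsilon$, the second is at most $\epsilon$, and since $Q(X) = \mu_2 I + (\mu_1 - \mu_2)X$ the third equals $(\mu_1 - \mu_2)\norm{X - X_0} < \epsilon$ because $\mu_1 - \mu_2 \leq 1$. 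The total is strictly below $4\epsilon = \half(\mu_1 - \mu_2)\delta$, which the deterministic estimate turns into $\norm{\hat{X} - X} < \delta$.

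The remaining bookkeeping is to check that the lower bound on $m$ in~(\ref{eq:mUniform}) meets the hypotheses of both Theorem~\ref{thm:UnifHammingOpIneq} (accuracy $\epsilon$, budget $D + \log 2$) and Lemma~\ref{lem:EmpAvgConcNet} (accuracy $\epsilon$, budget $D + \log 2$) at once; the Hamming requirement is the binding one, since it carries the larger coefficient $8\beta n$ and the larger logarithmic argument, so~(\ref{eq:mUniform}) subsumes the net requirement. I expect the main obstacle to be precisely the Lipschitz estimate and its interplay with the Hamming machinery: $\hat{Q}_\cl{P}(X)$ jumps as $X$ crosses a threshold $\tr{P_j X} = \half$, so the only route to transfer net-level concentration to all of $\Proj_\F(1, 2n)$ is the uniform control of separation frequencies furnished by the $t$-soft Hamming distance developed in Section~\ref{subsec:SoftHammingArgument}.
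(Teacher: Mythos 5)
Your proposal is correct and follows essentially the same route as the paper's own proof: the deterministic estimate~(\ref{eq:ErrorOpNormIneq}), an $\epsilon$-net from Lemma~\ref{lem:ProjNetSize} with concentration on the net via Lemma~\ref{lem:EmpAvgConcNet}, the uniform Hamming bound of Theorem~\ref{thm:UnifHammingOpIneq} to control $\norm{\hat{Q}_\cl{P}(X) - \hat{Q}_\cl{P}(X_0)} \leq d_\cl{P}(X,X_0)$, and the same three-term triangle-inequality split with budget $D + \log 2$ allocated to each event. The only addition is your explicit check that the Hamming requirement on $m$ dominates the net requirement, which the paper leaves implicit.
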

\begin{proof}
    Let $\cl{N}_\epsilon$ be an $\epsilon$-net for $\Proj_\F(1, 2n)$ such that $\log\abs{\cl{N}_\epsilon} \leq 4\beta n\log(1 + 2\epsilon^{-1})$ as in Lemma~\ref{lem:ProjNetSize}. By our choice of $m$, Lemma~\ref{lem:EmpAvgConcNet} says that with probability greater than $1 - \Exp{-\log(2)-D}$ we have $\norm{\hat{Q}_\cl{P}(X) - Q(X)} \leq \epsilon$ for all $X \in \cl{N}_\epsilon$ (call this event $\cl{A}$). Also by our choice of $m$, Theorem~\ref{thm:UnifHammingOpIneq} says that with probability at least $1 - \Exp{-\log(2) - D}$ we have $d_\cl{P}(X,Y) \leq \norm{X - Y} + \epsilon$ for all $X,Y \in \Proj_\F(1, 2n)$ (call this event $\cl{B}$).
    
    Suppose that $\cl{A}$ and $\cl{B}$ both occur, which happens with probability at least $1 - \Exp{-D}$, and consider an arbitrary $X \in \Proj_\F(1, 2n)$. We know from (\ref{eq:ErrorOpNormIneq}) that
    \begin{equation}\label{eq:ErrorIneq}
        \norm{\hat{X} - X} \leq 2(\mu_1 - \mu_2)^{-1}\norm{\hat{Q}_\cl{P}(X) - Q(X)}.
    \end{equation}

    To bound the right-hand side of this last inequality we pass to the $\epsilon$-net $\cl{N}_\epsilon$ by picking $X_0 \in \cl{N}_\epsilon$ with $\norm{X - X_0} < \epsilon$. Then
    \begin{equation}\label{eq:TriangleIneq}
        \norm{\hat{Q}_\cl{P}(X) - Q(X)} \leq \norm{\hat{Q}_\cl{P}(X) - \hat{Q}_\cl{P}(X_0)} + \norm{\hat{Q}_\cl{P}(X_0) - Q(X_0)} + \norm{Q(X_0) - Q(X)}.
    \end{equation}
    
    Next, we examine each of the three terms on the right side of (1\ref{eq:TriangleIneq}). To bound the first term, note that $\abs{\ps{j : \hat{P}_j(X) \neq \hat{P}_j(X_0)}} = m\cdot d_\cl{P}(X, X_0)$. Using this and the assumption that $\cl{A}$ holds yields
    \begin{equation}
        \norm{\hat{Q}_\cl{P}(X) - \hat{Q}_\cl{P}(X_0)} = \norm{\frac{1}{m}\sum_{j: \hat{P}_j(X) \neq \hat{P}_j(X_0)} \hat{P}_j(X) - \hat{P}_j(X_0)} \leq d_\cl{P}(X,X_0) \leq 2 \epsilon.
    \end{equation}
    Since $\cl{B}$ holds, we can bound the second term by $\norm{\hat{Q}_\cl{P}(X_0) - Q(X_0)} \leq \epsilon$. Lastly, using Proposition~\ref{prop:ExpFlippedProjs} gives $Q(X) - Q(X_0) = (\mu_1 - \mu_2)(X - X_0)$, and so we can bound the third term by
    \begin{equation}
        \norm{Q(X_0) - Q(X)} = (\mu_1 - \mu_2)\norm{X - X_0} \leq (\mu_1 - \mu_2)\epsilon.
    \end{equation}
    
    Using these three bounds together in (\ref{eq:TriangleIneq}) gives
    \begin{equation}
        \norm{\hat{Q}_\cl{P}(X) - Q(X)} \leq 3\epsilon + (\mu_1 - \mu_2)\epsilon \leq \half(\mu_1 - \mu_2)\delta,
    \end{equation}
    which combined with (\ref{eq:ErrorIneq}) yields $\norm{\hat{X} - X} < \delta$.
\end{proof}

\begin{figure}
    \centering
    \includegraphics[width=0.8\textwidth]{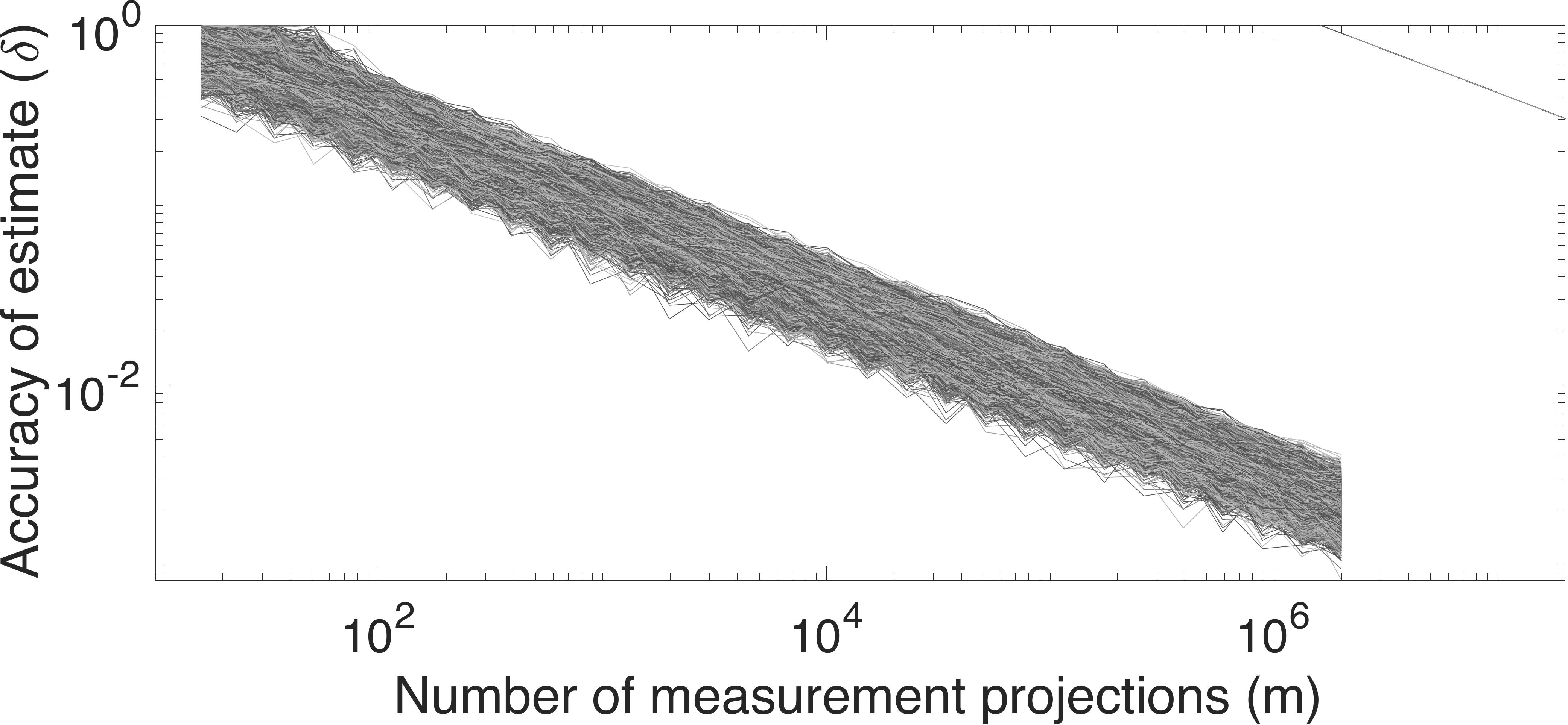}
    \caption{Plot showing the accuracy for the recovery of 15000 random inputs using (PEP) with a fixed collection of measurement projections on $\R^{16}$. The single line separate from the cluster represents the upper bound on $\delta$ given by Theorem~\ref{thm:Uniform}.}
    \label{fig:UniformPlot}
\end{figure}

See Figure~\ref{fig:UniformPlot} for a plot showing how our bound on the sufficient number of measurements to achieve a uniform accuracy of $\delta$ relates to experimental results.

As in the pointwise case, our proof allows us to fine tune the probability that a sequence of measurement projections provides uniformly accurate recovery by adjusting the value of $D$ in (\ref{eq:mUniform}). In particular, we can take $D = n$ to ensure success with overwhelming probability, i.e. the failure rate decays exponentially in $n$. In the pointwise case, this resulted in gaining an additional factor of $n$ in the number of measurement projections, see Corollary~\ref{cor:PointwiseOverwhelming}. In the uniform case, however, the asymptotics remain the same.

\begin{cor}
    Let $\delta > 0$. If $\alpha > 0$, $m \geq C \delta^{-2}n^2\log(\delta^{-1}n)$, and $\cl{P} = \{P_j\}_{j=1}^m$ is an independent sequence of uniformly distributed projections in $\Proj_\F(n, 2n)$, then with probability at least $1 - \Exp{-n}$
    \begin{equation}
        \norm{\hat{X} - X} < \delta
    \end{equation}
    for all $X \in \Proj(1, 2n)$, where $\hat{X}$ is the solution to (PEP) with input $\Phi_\cl{P}(X)$ and $C$ is a constant.
\end{cor}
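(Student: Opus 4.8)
The plan is to specialize Theorem~\ref{thm:Uniform} to the choice $D = n$, which immediately turns the probability bound $1 - \Exp{-D}$ into $1 - \Exp{-n}$, and then to verify that the sufficient lower bound on $m$ produced by the theorem is of the asymptotic order $\delta^{-2}n^2\log(\delta^{-1}n)$. Once this is established, for a suitably large absolute constant $C$ the hypothesis $m \geq C\delta^{-2}n^2\log(\delta^{-1}n)$ will imply the lower bound (\ref{eq:mUniform}) with $D = n$, and the conclusion follows directly.

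First I would control the spectral gap. By Lemma~\ref{lem:SpecGapBound} we have $(\mu_1 - \mu_2)^{-1} = O(\sqrt{n})$, so with $\epsilon = \frac{\mu_1 - \mu_2}{8}\delta$ we get $\epsilon^{-1} = O(\delta^{-1}\sqrt{n})$ and hence $\epsilon^{-2} = O(\delta^{-2}n)$. This single extra factor of $n$ in $\epsilon^{-2}$ is exactly what upgrades the $n$ appearing in the uniform Hamming bound of Theorem~\ref{thm:UnifHammingOpIneq} to the $n^2$ appearing in the present statement.

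Next I would estimate the bracketed factor in (\ref{eq:mUniform}) with $D = n$. The argument of the logarithm satisfies $1 + 128\frac{\sqrt{2\beta n - 1}}{2\sqrt{2\pi}}\epsilon^{-1} = O(\delta^{-1}n)$, using $\sqrt{2\beta n - 1} = O(\sqrt{n})$ together with $\epsilon^{-1} = O(\delta^{-1}\sqrt{n})$, so that logarithm is $O(\log(\delta^{-1}n))$ and the term $8\beta n\log(\cdots)$ is $O(n\log(\delta^{-1}n))$. The constant $2\log(2)$ is negligible, and crucially the additive term $D = n$ is itself $O(n\log(\delta^{-1}n))$, so it does not inflate the order of the bracket. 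Hence the whole bracket is $O(n\log(\delta^{-1}n))$, and multiplying by $2\epsilon^{-2} = O(\delta^{-2}n)$ yields $m = O(\delta^{-2}n^2\log(\delta^{-1}n))$, as claimed.

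There is no serious obstacle here; the argument is pure asymptotic bookkeeping applied to the explicit bound of Theorem~\ref{thm:Uniform}. The only point requiring genuine care is the observation that, in contrast to the pointwise setting of Corollary~\ref{cor:PointwiseOverwhelming}, the choice $D = n$ does \emph{not} worsen the asymptotics: the leading term of the bracket already carries a factor of $n$, so the additive $n$ coming from $D$ is absorbed rather than producing a new factor. This is precisely why the uniform bound retains the order $\delta^{-2}n^2\log(\delta^{-1}n)$.
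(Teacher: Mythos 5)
Your proposal is correct and is essentially identical to the paper's own argument: the paper obtains this corollary by setting $D = n$ in Theorem~\ref{thm:Uniform} and observing, exactly as you do via Lemma~\ref{lem:SpecGapBound} and $\epsilon^{-2} = O(\delta^{-2}n)$, that the bracket in (\ref{eq:mUniform}) is already of order $n\log(\delta^{-1}n)$, so the additive $D = n$ is absorbed and the total bound remains $O(\delta^{-2}n^2\log(\delta^{-1}n))$. Your closing remark contrasting this with Corollary~\ref{cor:PointwiseOverwhelming}, where $D = n$ does inflate the asymptotics, is precisely the point the paper makes in the paragraph preceding the corollary.
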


As in the pointwise case, we can modify the proof of Theorem~\ref{thm:Uniform} to show that uniform recovery using (PEP) is robust to bit-flip errors occurring in a faulty measurement
$\tilde{\Phi}_{\cl P}$.

\begin{cor}
    Let $\delta$, $m$, and $\{P_j\}_{j=1}^m$ be as in Theorem~\ref{thm:Uniform}, and additionally let $0 < \tau < 1$. Then with probability at least $1 - \Exp{-D}$, for all $X \in \Proj_\F(1, 2n)$ and all $\tilde{\Phi}_\cl{P}(X) \in \{0,1\}^m$ with
    \begin{equation}
        d_H(\Phi_\cl{P}(X), \tilde{\Phi}_\cl{P}(X)) \leq \tau
    \end{equation}
    we have
    \begin{equation}
        \norm{\tilde{X} - X} \leq \delta + 2(\mu_1 - \mu_2)^{-1}\tau,
    \end{equation}
    where $\tilde{X}$ is the solution to (PEP) with input $\tilde{\Phi}_\cl{P}(X)$ and $\mu_1-\mu_2$ is bounded by Lemma~\ref{lem:SpecGapBound}.
\end{cor}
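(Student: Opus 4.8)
The plan is to reproduce the structure of the pointwise bit-flip corollary proved earlier, but to route every estimate through the \emph{uniform} operator-norm control of $\hat{Q}_\cl{P}(X)$ that is already established \emph{inside} the proof of Theorem~\ref{thm:Uniform}. The key observation is that Theorem~\ref{thm:Uniform} does not merely assert $\norm{\hat{X} - X} < \delta$: its proof shows, on an event of probability at least $1 - \Exp{-D}$, the stronger intermediate bound $\norm{\hat{Q}_\cl{P}(X) - Q(X)} \leq \half(\mu_1 - \mu_2)\delta$ for \emph{all} $X \in \Proj_\F(1, 2n)$ simultaneously. This uniform control of the empirical average is exactly the ingredient the faulty argument needs, and it requires no new concentration estimate.

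First I would note that the operator-norm error bound is deterministic in the following sense. Since $\tilde{X}$ maximizes the Rayleigh quotient of the faulty average $\tilde{Q}_\cl{P}(X)$, the same reasoning as in (\ref{eq:ErrorOpNormIneq}) --- combining the optimality relation $\tr{\tilde{Q}_\cl{P}(X)(\tilde{X} - X)} \geq 0$ with the spectral identity from Lemma~\ref{lem:ExpectedDifferenceTraceMeasurements} --- gives
\begin{equation*}
    \norm{\tilde{X} - X} \leq 2(\mu_1 - \mu_2)^{-1}\norm{\tilde{Q}_\cl{P}(X) - Q(X)}
\end{equation*}
for every fixed $X$ and every faulty outcome $\tilde{\Phi}_\cl{P}(X)$, with no probabilistic assumption. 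I would then split the right-hand side by the triangle inequality into $\norm{\tilde{Q}_\cl{P}(X) - \hat{Q}_\cl{P}(X)}$ and $\norm{\hat{Q}_\cl{P}(X) - Q(X)}$. The first term is controlled purely by the Hamming hypothesis: the faulty and correct flipped projections agree except on the indices where $\tilde{\Phi}_\cl{P}(X)$ and $\Phi_\cl{P}(X)$ disagree, and on each such index the difference $\hat{P}_j(X) - \tilde{P}_j(X) = \pm(2P_j - I)$ has operator norm one, so averaging over at most $\tau m$ such indices yields $\norm{\tilde{Q}_\cl{P}(X) - \hat{Q}_\cl{P}(X)} \leq d_H(\Phi_\cl{P}(X), \tilde{\Phi}_\cl{P}(X)) \leq \tau$.

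To finish, on the event furnished by the proof of Theorem~\ref{thm:Uniform} I would bound the second term uniformly by $\norm{\hat{Q}_\cl{P}(X) - Q(X)} \leq \half(\mu_1 - \mu_2)\delta$, valid for all $X$ at once. Substituting both bounds gives $\norm{\tilde{Q}_\cl{P}(X) - Q(X)} \leq \tau + \half(\mu_1 - \mu_2)\delta$, and hence $\norm{\tilde{X} - X} \leq \delta + 2(\mu_1 - \mu_2)^{-1}\tau$, uniformly in $X$ and in the choice of faulty measurement, on the same event. The main point to get right --- and the only real obstacle --- is that the quantifier over all admissible $\tilde{\Phi}_\cl{P}(X)$ is handled for free: the probabilistic event fixes the realization of $\cl{P}$ and thereby the uniform control of $\hat{Q}_\cl{P}(X) - Q(X)$, while the contribution of the bit flips enters only through the deterministic, worst-case perturbation of size $\tau$. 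Thus no union bound over the (infinitely many) faulty patterns is needed, and the spectral gap $\mu_1 - \mu_2$ appearing in the additive error term is controlled by Lemma~\ref{lem:SpecGapBound}.
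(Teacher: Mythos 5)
Your proof is correct and follows essentially the same route as the paper's: the deterministic inequality $\norm{\tilde{X} - X} \leq 2(\mu_1-\mu_2)^{-1}\norm{\tilde{Q}_\cl{P}(X) - Q(X)}$, the triangle-inequality split in which the bit-flip term is bounded by $\tau$, and the uniform high-probability bound on $\norm{\hat{Q}_\cl{P}(X) - Q(X)}$ imported from the proof of Theorem~\ref{thm:Uniform}. Your explicit observation that no union bound over the faulty patterns $\tilde{\Phi}_\cl{P}(X)$ is needed, since the flips enter only as a deterministic worst-case perturbation, just makes precise what the paper leaves implicit.
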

\begin{proof}
    Let $\tilde{Q}_\cl{P}(X)$ denote the empirical average of the (faulty) flipped projections, i.e. flipped using $\tilde{\Phi}_\cl{P}(X)$ rather than $\Phi_\cl{P}(X)$. As before, for all $X \in \Proj_\F(1, 2n)$ we have
    \begin{equation}
        \norm{\tilde{X} - X} \leq 2(\mu_1 - \mu_2)^{-1}\norm{\tilde{Q}_\cl{P}(X) - Q(X)}.
    \end{equation}
    Using the triangle inequality, we expand
    \begin{equation}
        \norm{\tilde{Q}_\cl{P}(X) - Q(X)} \leq \norm{\tilde{Q}_\cl{P}(X) - \hat{Q}_\cl{P}(X)} + \norm{\hat{Q}_\cl{P}(X) - Q(X)} \leq \tau + \norm{\hat{Q}_\cl{P}(X) - Q(X)}.
    \end{equation}
    Bounding $\norm{\hat{Q}_\cl{P}(X) - Q(X)}$ with high probability proceeds exactly as in Theorem~\ref{thm:Uniform}.
\end{proof}

%\subsection*{Acknowledgment}

\end{document}